\definecolor{mygreen}{HTML}{43a047}
\newcolumntype{H}{>{\setbox0=\hbox\bgroup}c<{\egroup}@{}}
\newcommand{\Om}{\Omega}
\DeclareMathOperator*{\esssup}{ess\,sup}
\def\aaa{\mathfrak{a}}
\def\bbb{\mathfrak{b}}
\newcommand{\psitwo}{\psi_2^{\frakKone}}
\def \xin{\xi^{(n)}}
\def \xit{\xi^{(n)}_t}
\def \xitt{\xi^{(n)}_{tt}}
\def \bxin{\boldsymbol{\xi}}
\def \bxit{\boldsymbol{\xi_t}}
\def \bxitt{\boldsymbol{\xi_{tt}}}
\def \psit{\psi_t}
\def \psitt{\psi_{tt}}
\def \taua {\tau^a}
\newcommand{\bxi}{\boldsymbol{\xi}}
\newcommand{\ds}{\, \textup{d} s }
\newcommand{\dx}{\, \textup{d} x}
\newcommand{\dxs}{\, \textup{d}x\textup{d}s}
\newcommand{\intTO}{\int_0^T \int_{\Omega}}
\newcommand{\intt}{\int_0^t}
\newcommand{\intT}{\int_0^T}
\newcommand{\intO}{\int_{\Omega}}
\newcommand{\R}{\mathbb{R}} 
\newcommand{\N}{\mathbb{N}} 
\newcommand{\Honezero}{H_0^1(\Omega)}
\newcommand{\bfq}{\boldsymbol{q}}
\newcommand{\frakKone}{\mathfrak{K}_1}
\newcommand{\tfrakKone}{\tilde{\mathfrak{K}}_1}
\newcommand{\frakKtwo}{\mathfrak{K}_2}
\newcommand\Lconv{\ast}
\newcommand{\rt}{\tau_\theta}
\newcommand{\frakR}{\mathfrak{r}}
\newcommand{\mm}{\mathcal M}
\definecolor{grey}{rgb}{0.5,0.5,0.5}
\newcommand\dhookrightarrow{\mathrel{\ThisStyle{\abovebaseline[-.6\LMex]{%
				\ensurestackMath{\stackanchor[.15\LMex]{\SavedStyle\hookrightarrow}{%
						\SavedStyle\hookrightarrow}}}}}}
\colorlet{brown}{brown!80!black}
\definecolor{darkgreen}{rgb}{0,0.5,0}
\newcommand{\genk}{\mathfrak{K}}
\newcommand{\calX}{\mathcal{X}}
\newcommand{\leqnomode}{\tagsleft@true\let\veqno\@@leqno}
\newcommand{\reqnomode}{\tagsleft@false\let\veqno\@@eqno}
\begin{document}

\title{A unified analysis framework for generalized fractional Moore--Gibson--Thompson equations: Well-posedness and singular limits}

\titlerunning{A unified analysis framework for generalized fractional MGT equations}
\author{
        Mostafa Meliani }

\authorrunning{M. Meliani}
\institute{Mostafa Meliani
\at
Department of Mathematics, Radboud university,\\ Heyendaalseweg 135,\\
6525 AJ Nijmegen, The Netherlands \\
\email{mostafa.meliani@ru.nl} 
}

\date{}

\maketitle
\begin{abstract}
In acoustics, higher-order-in-time equations arise when taking into account a class of thermal relaxation laws in the modeling of sound wave propagation. In this work, we analyze initial boundary value problems for a family of such equations and determine the behavior of solutions as the relaxation time vanishes. In particular, we allow the leading term to be of fractional type. The studied model can be viewed as a generalization of the well-established (fractional) Moore--Gibson--Thompson equation with three, in general nonlocal, convolution terms involving two different kernels. The interplay of these convolutions will influence the uniform analysis and the limiting procedure. To unify the theoretical treatment of this class of local and nonlocal higher-order equations, we relax the classical assumption on the leading-term kernel and consider it to be a Radon measure. After establishing uniform well-posedness with respect to the relaxation time of the considered general model, we connect it, through a delicate singular limit procedure, to fractional second-order models of linear acoustics. 
 
\subclass{35A05\and 35L05\and 35L35}
\keywords{fractional calculus\and wave equations\and singular limits\and Moore--Gibson--Thompson equation}
\end{abstract}
\section{Introduction} \label{Sec:Introduction}
In acoustics, higher-order-in-time equations arise when modeling heat exchanges in the medium through a thermally relaxed flux law. In this work, we investigate such an equation of the form:
\begin{equation}\label{eqn:first_eq}
	\tau^a\big(\frakKone \Lconv \psi_{tt})_{t} + \aaa\psi_{tt} - c^2  \tau^a\frakKone \Lconv \Delta\psi_t - c^2 \bbb \Delta \psi -   \delta \rt^b \frakKtwo \Lconv \Delta\psi_{tt}   = f.
\end{equation} 
Equation~\eqref{eqn:first_eq} is a generalization of the widely studied Moore--Gibson--Thompson (MGT) equation of linear acoustics:
\begin{equation}\label{eq:MGT}
	\tau\psi_{ttt} + \psi_{tt} - c^2  \tau\Delta\psi_t - c^2 \Delta \psi -   \delta \Delta\psi_{t}   = f;
\end{equation}
see, for example, \cite{kaltenbacher2011wellposedness,lasiecka2015moore,dell2017moore,pellicer2019wellposedness,bucci2020regularity} and the references contained therein for mathematical studies of the latter. 
The MGT equation is then obtained by formally setting $\aaa=\bbb=1$, $\frakKone = \delta_0$, $\frakKtwo =1$, $a=1$, and $b=0$ in \eqref{eqn:first_eq}, where $\delta_0$ is the Dirac delta distribution.
Our goal is to provide the analysis of equations \eqref{eqn:first_eq} and \eqref{eq:MGT} in a unified framework; that is, we view both models through the lens of equation \eqref{eqn:first_eq} with kernels allowed to be Radon measures.\\
\indent Equation \eqref{eqn:first_eq} comes about when assuming heat flux laws of more general type than Maxwell--Cattaneo's~\cite{cattaneo1958forme}. These are, among others, relevant in complex heterogeneous media that exhibit anomalous diffusion. 
We refer the reader to~\cite{povstenko2015fractional,compte1997generalized,zhang2014time} for discussions on general heat flux laws. The derivation of \eqref{eqn:first_eq} with Caputo--Dzhrbashyan derivatives is due to~\cite{kaltenbacher2022time}; the justification of the generalized fractional equation in Section~\ref{Sec:Modeling} will follow a similar reasoning. \\
\indent Note that the leading term $(\tau^a\frakKone \Lconv \psi_{tt})_{t}$ will necessitate prescribing the third initial condition on $(\frakKone\Lconv\psitt)(0)$. This is due to the term having the form of a generalized Riemann--Liouville derivative on $\psitt$. We refer the reader to \cite[Chapter 2]{podlubny1998fractional} for a discussion on the appropriate initial conditions for different fractional derivatives. This is obviously different from other works on time-fractional MGT equations with exclusively Caputo--Dzhrbashyan derivatives such as the aforementioned~\cite{kaltenbacher2022time}. Nevertheless, we should mention that the results here cover the linear fractional MGT equations of~\cite{kaltenbacher2022time} when the third initial condition $\psitt(0) = 0$ is prescribed. A discussion on the choice of the leading term's form is provided in Section~\ref{Sec:Modeling}.
\\
\indent Due to the presence of the convolutions $\mathfrak{K}_{1,2}\Lconv \cdot$, equation~\eqref{eqn:first_eq} is, in general, nonlocal in time. Nonlocal wave equations have been studied by a number of authors in different settings. Traditionally confined to that of a smooth kernel with leading integer order~\cite{alves2018moore,dell2016moore,conti2006singular}, there has been an increasing number of works using singular kernels~\cite{kaltenbacher2022limiting,kaltenbacher2021determining}. The motivation to consider these has originally stemmed from fractional derivative kernels, which are widely studied by the fractional calculus community~\cite{kian2017existence, liu2022uniqueness,jin2021fractional,kaltenbacher2022time}. Here we go one step further by viewing the leading-term kernel $\frakKone$ as a Radon measure. Thus, a Dirac pulse, $\delta_0$, would correspond to a unit point mass measure at 0, while a fractional kernel would be identified with an absolutely continuous measure.
\\
\indent From a mathematical perspective allowing for more general kernels is justified by the fact that fractional derivative kernels are but a subset of a much larger family. 
In fact, Sonine~\cite{sonine1884generalisation} showed that the resolution of Abel's classical mechanical problem~\cite{abel1826resolution} is owed to the fact that the fractional derivative kernel
\(
g_{\frac12}(t):= \dfrac{1}{\Gamma(\frac12)} t^{-\frac12}
\)
has a resolvent, i.e., $\intt g_{\frac12}(t-s) g_{\frac12}(s)\ds = 1$ for all $t \in \R_+$. This perspective not only allows us to solve more general fractional differential equations, it also provides us with a better understanding of the way fractional derivatives behave. In fact, with this perspective, it becomes natural to put emphasis on the resolvent kernel when discussing the well-posedness of \eqref{eqn:first_eq}.
\\
\indent We intend to take the same viewpoint as Sonine, and show that \eqref{eqn:first_eq}, supplemented with appropriate initial and boundary data, is well-posed provided $\frakKone$ has a ``regular enough" resolvent $\tfrakKone$. A discussion on the sufficient regularity of $\tfrakKone$ is given in Section~\ref{Sec:preliminaries}, alongside useful generalizations of well-known results from functional analysis. 
\\
\indent In equation \eqref{eqn:first_eq}, the relaxation time $\tau$ plays an important role in the behavior of the model. Indeed, $\tau$ characterizes the time lag between a temperature change and the ensuing heat flux variation. Understanding how higher-order acoustic equations behave when the relaxation time $\tau$ is sent to $0$ (the limit at which temperature changes are felt immediately by the medium) has been a topic of recent interest in the mathematical acoustic community; see e.g., \cite{bongarti2020singular,bongarti2020vanishing,kaltenbacher2020vanishing}. We mention that vanishing relaxation time limits are also of interest for second-order models with a $\tau$-dependent memory~\cite{conti2005singular,conti2006singular,kaltenbacher2022limiting}.
\\
\indent In order to be able to conduct such a limiting analysis one has to show that well-posedness can be obtained uniformly in $\tau$. Establishing uniform-in-$\tau$ energy bounds for \eqref{eqn:first_eq} is particularly challenging as the loss of the strong damping ($- \delta \Delta \psit$) makes it trickier to control the regularity of higher order terms when testing with, for example, $\psit$. 
In Sections~\ref{Sec:a_g_b},~\ref{Sec:a_eq_b}, and~\ref{Sec:r_one}, we discuss how specific structures of the equation can be leveraged to obtain $\tau$-uniform well-posedness. In particular, the following will play an important role: the comparison of $\aaa$ and $\bbb$, the regularity of the resolvent $\tfrakKone$, and the relationship between $\frakKone$ and $\frakKtwo$, which we assume to be of the form $$\frakKtwo = \frakR\Lconv \frakKone,$$ with $\frakR$ being an integrable function.
\\
\indent Thereafter, we show that as the relaxation time $\tau\searrow 0$, the solution of the model \eqref{eqn:first_eq} converges to the solution of a second-order-in-time equation with a dissipation of fractional type, thus connecting different models of linear fractional acoustics. The limiting procedure provides, incidentally, well-posedness for the limiting class of equations:
\begin{equation}
	\aaa\psi_{tt} - c^2 \bbb \Delta \psi -   \delta \rt^b \frakKtwo \Lconv \Delta\psi_{tt}   = f.
\end{equation}
\indent The aim, throughout, is to perform the uniform-in-$\tau$ analysis in as low a regularity setting as possible, thus we require minimal smoothness on the initial data and source term. We show that in such a setting, strong convergence to the limit is achieved and, by resorting to a creative tailored testing, a rate of convergence can be established in nonstandard norms. We thus complement the result of \cite{bongarti2020singular}, where equation \eqref{eq:MGT} is considered. This is expanded on in Section~\ref{Sec:vanishing_time_limit}.
\\
\indent 
We note that the estimates derived in Theorem~\ref{Prop:Wellposedness_a_g_b} and Proposition~\ref{Prop:wellposedness_GFE_laws} are also uniform with respect to the damping parameter $\delta$. These results could be used to study the $\delta$-limiting behavior of these equations, as done in~\cite{kaltenbacher2021inviscid} for integer-order models. We, nevertheless, do not pursue the inviscid limit analysis ($\delta \searrow 0$) in this work.
\\
\indent The rest of the paper is structured as follows. We give a physical motivation for the considered family of equations in Section~\ref{Sec:Modeling}. Section~\ref{Sec:preliminaries}, is concerned with adapting well-known results from functional analysis to the generalized fractional derivative setting. In Section~\ref{Sec:uniform_wellp}, we establish uniform well-posedness of \eqref{eqn:first_eq} under different assumptions on the constants $\aaa$ and $\bbb$, the kernel $\frakR$, and the regularity of the resolvent $\tfrakKone$. The limiting behavior of the model as the relaxation time vanishes is investigated in Section~\ref{Sec:vanishing_time_limit}. The main results of the said section are contained in Theorems~\ref{Prop:Cauchy_seq1} and \ref{Prop:Cauchy_seq2}.

\section{Acoustic modeling using generalized Maxwell--Cattaneo flux laws} \label{Sec:Modeling}
 We consider the flux law which incorporates the thermal relaxation as follows:
\begin{equation} \label{MC_flux_law_nonlocal}
	\bfq+ \tau^{a} \frakKone \Lconv \bfq_t= - \rt^b \kappa \frakKtwo\Lconv \nabla\theta_t.
\end{equation}
with  the kernels $\frakKone$ and $\frakKtwo$ being independent of $\tau$. Here $\bfq$ represents the heat flux and $\theta$, the temperature. The powers $a$ and $b$ attached to the relaxation times $\tau$ and $\rt$ are there to ensure the dimensional homogeneity of the law; see \cite{kaltenbacher2022limiting} for a more detailed discussion.
The notation $\genk\Lconv g$ stands for the Laplace convolution of $\genk$ and $g$ and is made precise in Section~\ref{Sec:preliminaries}.
This relation generalizes the well-known Maxwell--Cattaneo law~\cite{cattaneo1958forme}:
\begin{equation} \label{MC_law}
	\tag{MC}
\begin{aligned}
	\bfq+\tau \bfq_t= - \kappa \nabla \theta ,
\end{aligned}
\end{equation}
as well as the Compte--Metzler fractional laws~\cite{compte1997generalized}, named hereafter GFE I, GFE II, GFE III, and GFE. The latter are generalized flux equations obtained for particular choices of kernels which we give in Table~\ref{table:r_kernels}.
The numbering of the equations follows that of~\cite{compte1997generalized,kaltenbacher2022time}. To each flux law, we associate a wave model name which will be used throughout to refer to the resulting wave equation. Table~\ref{table:r_kernels} contains also the expression of the resolvent $\tfrakKone$ as well as that of the kernel $\frakR$ verifying \[\frakKtwo = \frakR\Lconv \frakKone.\]
The penultimate column prescribes the range of the leading term fractional differentiation order $\alpha$ for which the heat flux law is defined. This range is taken from \cite{compte1997generalized} and, for GFE I, restricted using the results of \cite{zhang2014time}. This restriction for GFE I ensures that $\frakR \in L^1(0,T)$.
The last column refers to the theorems covering the singular $\tau$-limit analysis for the corresponding fractional MGT equation with $\aaa=\bbb =1$. The restriction on $\alpha$ for the fMGT wave model means that it is covered when $\alpha$ is larger than $\frac12$; see Section~\ref{Sec:a_eq_b} for more details on this requirement.
\begin{table}[H]
	\begin{adjustbox}{max width=\textwidth}
		\begin{tabular}{|c|c||c|c|c|c|c|c|c|l|}
			\hline&&&&&&&&&
			\\[-3mm]
			Flux law & Wave model &	$\frakR$ & $\frakKone$ & $\frakKtwo$ &$\tfrakKone$ &$a$&$b$ & Range of $\alpha$ & $\tau$-limit results
			\\[1mm]
			\hline\hline
			\ref{MC_law} \rule{0pt}{3ex} & MGT & $g_1$&$\delta_0$	&$g_{1}$ &	$g_{1}$ &$1$& 0& --& Theorems~\ref{Prop:Cauchy_seq1} and \ref{Prop:Cauchy_seq2} \\
			GFE I\rule{0pt}{3ex} & fMGT I & $g_{2\alpha-1}$	& $g_{1-\alpha}$ 	&$g_{\alpha}$	&	$g_{\alpha}$ &$\alpha$&$1-\alpha$& $(1/2,1)$& Theorem~\ref{Prop:Cauchy_seq2}\\
			GFE II\rule{0pt}{3ex} & fMGT II & $g_1$	& $g_{1-\alpha}$		&$g_{2-\alpha}$	&	$g_{\alpha}$ &$\alpha$&$\alpha-1$ &$(0,1)$& Theorems~\ref{Prop:Cauchy_seq1} and \ref{Prop:Cauchy_seq2} \\
			GFE III\rule{0pt}{3ex} & fMGT III & $g_\alpha$	&$\delta_0$			&$g_{\alpha}$&	$g_{1}$ &$1$ &$1-\alpha$ &$(0,1)$& Theorem~\ref{Prop:Cauchy_seq2}\\
			GFE \rule{0pt}{3ex} & fMGT & $g_\alpha$&$g_{1-\alpha}$	&$g_{1}$		&	$g_{\alpha}$ &$\alpha$& 0 &$(0,1)$& Theorem~\ref{Prop:Cauchy_seq2} if $\alpha > 1/2$\\
			\hline
		\end{tabular}
	\end{adjustbox}
	~\\[1mm]
	\caption{\small MGT and fMGT kernels with $g_\alpha$ defined in \eqref{eqn:def_g} and the theorems covering the singular limit analysis} \label{table:r_kernels}
\end{table}

$g_\alpha$ in Table~\ref{table:r_kernels} stands for the factional Caputo--Dzhrbashyan derivative kernel given by:
\begin{equation}\label{eqn:def_g}
	g_\alpha(t):= \dfrac{1}{\Gamma(\alpha)} t^{\alpha-1} \quad \textrm{for } \alpha>0.
\end{equation}
Note that $g_1 =1$ and that $g_{\alpha}\in L^1(0,T)$ for all $\alpha>0$.
\\
\indent We next discuss the derivation of an acoustic equation assuming the general flux law \eqref{MC_flux_law_nonlocal}. This follows by closely emulating the steps of \cite[Section 2]{kaltenbacher2022time}. 
In this derivation, we assume as is customary that initial values of quantities of interest are 0. 
In particular, this allows us to write 
\[\frakKone \Lconv \bfq_t = \Big(\frakKone \Lconv \bfq \Big)_t;\]
see~\cite[Corollary 3.7.3]{gripenberg1990volterra}. This assumption is however only made in this section where the justification of the model is sought. We argue here that this assumption, which is also made in, e.g., \cite{jordan2014second}, can lead to slightly different mathematical models of acoustics compared to~\cite{kaltenbacher2022time}.
Indeed, retracing the steps of~\cite[Section 2]{kaltenbacher2022time}, one may equivalently arrive at the wave equation: 
\begin{equation} 
	\begin{aligned}
		\begin{multlined}[t]
			\tau^a \big(\frakKone \Lconv \psi_{tt}\big)_t+\psi_{tt}
			 -  \tau^a  c^2 (\frakKone\Lconv\Delta \psi)_t -c^2 \Delta \psi - \rt^b \delta \frakKtwo \Lconv \Delta \psi_{tt}=0.
		\end{multlined}
	\end{aligned}
\end{equation}
We can then use the assumption of this section to argue that $\big(\frakKone\Lconv\Delta \psi \big)_t = \frakKone\Lconv\Delta \psi_t$. The reason we only do it for this term has to do with what is achievable in the analysis. In particular, when testing with $\psi_t$ we would like to argue that the term $- \frakKone\Lconv\Delta \psi_t$ dissipates energy even in the worst case scenario; see~\eqref{eqn:k1_positivity_lin} below.
Going forward, we study the abstract equation:
\begin{equation} 
	\begin{aligned}
		\begin{multlined}[t]
			\tau^a \big(\frakKone \Lconv \psi_{tt}\big)_t+ \aaa\psi_{tt}
			-  \tau^a  c^2  \frakKone\Lconv\Delta \psi_t  -c^2 \bbb\Delta \psi - \rt^b \delta \frakKtwo \Lconv \Delta \psi_{tt}=f,
		\end{multlined}
	\end{aligned}
\end{equation}
where $f$ is a source term and $\aaa,\, \bbb$ are positive real constants. The constants $\aaa$ and $\bbb$ were added so as to study how the order relation between them influences the behavior of the equation; see, e.g., Theorem~\ref{Prop:Wellposedness_a_g_b}. This equation is a generalization of the fractional MGT equations derived in \cite{kaltenbacher2022time}, with the reinterpretation of the leading-order derivative to be of generalized Riemann--Liouville type, which gets us closer to a desired rewriting; see discussion on page~\pageref{discussion:psit0}.
Obviously, if $\psitt(0) = 0$ is prescribed then the leading term can be expressed as 
\[\big(\frakKone \Lconv \psi_{tt}\big)_t = \frakKone \Lconv \psi_{ttt},\]
and the fractional MGT equations studied here coincide with those derived in~\cite{kaltenbacher2022time} as long as $\aaa = \bbb = 1$. Keeping the leading term of the form $\big(\frakKone \Lconv \psi_{tt}\big)_t$ gives us the freedom to cover a wider range of initial data (i.e., nonzero initial data), thus the choice was made to study the resulting acoustic equation with a Riemann--Liouville-type leading term.\label{discussion:RL_leading_term}
\section{Notation and preliminary theoretical results} \label{Sec:preliminaries}
Below, we will use the notation $A\lesssim B$ for $A\leq C\, B$ with a constant $C>0$ that may depend on the spatial domain $\Omega$, which we assume bounded and Lipschitz-regular in $\R^d$, but not time. If the constant depends on the time horizon $T$ we shall use the notation $A\lesssim_T B$. Above $d\geq1$ is the dimension of the space.

Let $X$ and $Y$ be two Banach spaces. We write
$X \hookrightarrow Y$ (respectively $X\dhookrightarrow Y$) for the continuous (respectively compact) embedding of $X$ into $Y$.\\
\indent In this work, $\mm(0,t)$ stands for the space of finite measures on $(0,t)$ and $\|\cdot\|_{\mm(0,t)}$ is the total variation norm associated to it; see \cite[Chapter 3]{gripenberg1990volterra} for more details.
\\\indent Recall that $\Lconv$ denotes the Laplace convolution, which should be interpreted as
\begin{align}
	&(\mathfrak{K}\Lconv g)(t)=\int_0^t \mathfrak{K} (s)\,g(t-s)\ds \quad &&\textrm{if }\genk, g \in L^1(0,t),\\
	&(\mathfrak{K}\Lconv g)(t)=\int_0^t \mathfrak{K}(\textup{d}s)\,g(t-s) \quad &&\textrm{if }\genk \in \mm(0,t),\ g \in L^1(0,t).
\end{align} 
\indent We shall also frequently use the symbol $\tilde{\mathfrak{K}}$ to denote the resolvent of a kernel/measure $\mathfrak{K}$, i.e., $\tilde{\mathfrak{K}} \Lconv \mathfrak{K} =1$. For discussions on the existence of such a resolvent, we refer to, e.g., \cite{gripenberg1980volterra,gripenberg1990volterra}.

Below, we give two lemmas allowing us to extract appropriately converging subsequences from bounded sequences in the space
\begin{equation} \label{def_Xfp}
X_\genk^p(0,T) = \{u \in L^p(0,T) \ |\ \genk \Lconv u_t \in L^p(0,T)\}, \qquad \textrm{where } \, 1 \leq p \leq \infty,
\end{equation}
endowed	with the norm
$$\|\cdot\|_{X_\genk^p(0,T)} = \big(\|u\|_{L^p}^p + \|(\genk\Lconv u_t)\|_{L^p}^p \big)^{1/p},$$ with the usual modification for $p=\infty$.
These lemmas, due to the construction of the spaces of interest, are similar to ones relating to Sobolev spaces (see e.g., \cite[Section 8.2]{brezis2010functional}).
\begin{lemma}[Compactness of $X_\genk^p(0,T)$] \label{Lemma:Caputo_seq_compact}
	Let $1\leq p\leq \infty$, $T>0$ and let $\genk \in \mm(0,T)$ be such that it has a resolvent $\tilde \genk \in L^{p'}(0,T)$ with $p' =  \dfrac{p}{p-1}$. 
	
	Then $X_\genk^p(0,T)$ is reflexive for $1<p<\infty$ and separable for $1 \leq p<\infty$.
	Furthermore, the unit ball of $X_\genk^p(0,T)$, $B_\genk^p$, is weakly sequentially compact for $1<p<\infty$. $B_\genk^\infty$ is weak-$*$ sequentially compact. 
	Additionally, $X_\genk^p(0,T)\hookrightarrow C[0,T]$.
\end{lemma}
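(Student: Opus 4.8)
The plan is to reduce every claim to the corresponding fact about the weighted space, exploiting the resolvent identity $\tfrakKone\Lconv\genk = 1$ to construct an explicit bijection between $X_\genk^p(0,T)$ and a closed subspace of $L^p(0,T)\times L^p(0,T)$. Concretely, define the linear map $\Phi: X_\genk^p(0,T)\to L^p(0,T)\times L^p(0,T)$ by $\Phi(u) = (u,\genk\Lconv u_t)$. It is by definition an isometric embedding onto its range $R(\Phi)$, so the first task is to show that $R(\Phi)$ is \emph{closed}: if $(u_n,\genk\Lconv (u_n)_t)\to (u,v)$ in $L^p\times L^p$, then convolving the second component with $\tfrakKone\in L^{p'}$ and using Young's inequality (valid since $\tfrakKone\Lconv v_n\to\tfrakKone\Lconv v$ in $L^p$, or in $L^\infty$ when $p<\infty$ pairs with $p'=1$ — here one must be a touch careful about the endpoint, using that $\tfrakKone\in L^{p'}$ and $v_n\in L^p$ give $\tfrakKone\Lconv v_n\in C[0,T]$ by Hölder) one recovers $\tfrakKone\Lconv v = \tfrakKone\Lconv(\genk\Lconv u_t) = (\tfrakKone\Lconv\genk)\Lconv u_t = 1\Lconv u_t = u - u(0)$ in the limit, after first identifying $u(0)$ as the limit of $u_n(0)$. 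This simultaneously shows $u\in X_\genk^p$ with $\genk\Lconv u_t = v$ and that $u$ has a continuous representative, since $u = u(0) + \tfrakKone\Lconv v$ with $\tfrakKone\Lconv v\in C[0,T]$; that last observation is exactly the embedding $X_\genk^p(0,T)\hookrightarrow C[0,T]$, and it also supplies the point evaluation $u\mapsto u(0)$ needed above.

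Once $R(\Phi)$ is a closed subspace of $L^p\times L^p$, the structural conclusions follow from standard Banach space theory applied to the product space. For $1<p<\infty$, $L^p(0,T)\times L^p(0,T)$ is reflexive and separable, a closed subspace of a reflexive (resp. separable) space is reflexive (resp. separable), and an isometric isomorphism transports these properties to $X_\genk^p(0,T)$. For $p=1$ one only claims separability, which likewise passes to closed subspaces of the separable space $L^1\times L^1$. Weak sequential compactness of the unit ball $B_\genk^p$ for $1<p<\infty$ is then immediate from reflexivity (Eberlein–Šmulian / the fact that bounded sequences in a reflexive space have weakly convergent subsequences); for $p=\infty$, one instead notes that $\Phi$ identifies $X_\genk^\infty(0,T)$ with a weak-$*$ closed subspace of $L^\infty\times L^\infty = (L^1\times L^1)^*$ — weak-$*$ closedness needs a small separate argument, e.g. testing against $L^1$ functions and using that $\tfrakKone\in L^1$ lets one pass to the limit in the defining relation $v = \genk\Lconv u_t$ in the sense of distributions — whence the Banach–Alaoglu theorem gives weak-$*$ sequential compactness of the unit ball (sequential compactness because $L^1\times L^1$ is separable).

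The main obstacle, and the place deserving the most care, is the closedness/weak-$*$-closedness of $R(\Phi)$ together with the correct handling of the endpoint exponents. The delicate points are: (i) making sense of $u(0)$ for an a priori merely $L^p$ function — this has to be done \emph{through} the representation $u = u(0)+\tfrakKone\Lconv u_t$, i.e. one shows the map $u\mapsto u - \tfrakKone\Lconv(\genk\Lconv u_t)$ is constant and identifies that constant; (ii) ensuring $\tfrakKone\Lconv(\genk\Lconv u_t)$ is genuinely continuous, which relies on $\tfrakKone\in L^{p'}$ and $\genk\Lconv u_t\in L^p$ via Hölder, and on associativity of Laplace convolution for these regularity classes (here one may invoke the convolution calculus of \cite{gripenberg1990volterra}); and (iii) in the $p=\infty$ case, verifying that the defining constraint is preserved under weak-$*$ limits, which again uses only that $\tfrakKone$ is integrable so that convolution with it is weak-$*$ continuous on $L^\infty$. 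Everything else is bookkeeping once $\Phi$ is known to be an isometry onto a closed subspace.
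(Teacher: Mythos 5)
Your proposal is correct and follows essentially the same route as the paper's proof: both identify $X_\genk^p$ isometrically with a closed subspace of $L^p(0,T)\times L^p(0,T)$, obtain the embedding into $C[0,T]$ from the representation $u = u(0) + \tilde\genk\Lconv(\genk\Lconv u_t)$, and then invoke reflexivity of closed subspaces, Eberlein--\v{S}mulian, and Banach--Alaoglu (with separability of $L^1\times L^1$ giving sequential weak-$*$ compactness for $p=\infty$). The only real divergence is in identifying the limit relation $\genk\Lconv u_t = v$: you convolve with the resolvent and invert via $\genk\Lconv\tilde\genk\Lconv v = 1\Lconv v$, which additionally requires the interchange $(\genk\Lconv w)_t = \genk\Lconv w_t$ for $w=\tilde\genk\Lconv v$ vanishing at $0$ (i.e.\ \cite[Corollary 3.7.3]{gripenberg1990volterra}) to get back from $u-u(0)=\tilde\genk\Lconv v$ to $\genk\Lconv u_t=v$, whereas the paper shifts the convolution onto a $C_c^1$ test function and passes to the limit there; both mechanisms are sound.
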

Note that Lemma~\ref{Lemma:Caputo_seq_compact} can be extracted in the case $p=2$ with $\genk=g_\alpha$ (in particular for $\alpha>1/2$) from available norm equivalence and completion results~\cite[Theorems 2.2 and 2.5]{kubica2020time}. Here we are interested in more general kernels and in, among others, the case $p=\infty$.

To keep the notations somewhat compact, we will hereafter denote $X_\genk^p(0,T)$ simply by $X_\genk^p$, omitting the time range, so long as there is no confusion.
\begin{remark}[On the embedding $X_\genk^p\hookrightarrow {C[0,T]}$ ]
	It is not surprising that completeness (and in particular closedness) of the space $X_\genk^p$ depends on the regularity of $\tilde \genk$ and \emph{in fine} on the embedding $X^p_\genk \hookrightarrow C[0,T]$. In fact, for fractional derivative kernels, the discussion of \cite[Theorem 2.5]{kubica2020time} suggests that, for $\alpha\leq 1/2$, and $\big(u_{n}\big)_{n\geq 1}$ a sequence in $C^1[0,T] \subset X_\genk^p$ with $u_n(0)=0$ for all $n\geq 1$, it holds that \[g_{1-\alpha}* u_{nt} \to (g_{1-\alpha}* u)_t\]
	in $H^\alpha(0,T)$; see also norm equivalence result~\cite[Theorem 2.2]{kubica2020time}.
	On the other hand,
	\((g_{1-\alpha}* u)_t =  g_{1-\alpha}* u_t\) 
	if and only if $u(0) = 0$.
\end{remark}

\begin{proof} 
	
	We will first show that $X_\genk^p\hookrightarrow C[0,T]$. It suffices to see that since $\genk \Lconv  u_t \in L^p(0,T)$ and $\tilde \genk \in L^{p'}(0,T)$, then $\tilde \genk \Lconv \genk \Lconv u_t$ is continuous on $[0,T]$ and:
	\[u(t) = \tilde \genk \Lconv \genk \Lconv u_t + u(0).\]
	
	To show that $X_\genk^p$ is complete, take a Cauchy sequence $(u_n)_{n\geq1}\subset X_\genk^p$. Then $(u_n)_{n\geq1}$ and $(\genk\Lconv u_{nt})_{n\geq1}$ are Cauchy sequences in $L^p(0,T)$ and therefore converge to some limits $u$ and $g$, respectively, in $L^p(0,T)$. 
	
	We want to show that the limits are such that $\genk\Lconv u_t = g$ in a weak sense. To this end, let $n\geq 1$ and take an arbitrary $\phi \in C_c^1([0,T])$. We have, using \cite[Theorem 3.6.1(ix,xi)]{gripenberg1990volterra}, that
	\[
	\int_0^T (\genk\Lconv u_{nt}) \,\phi \ds = (\genk\Lconv u_{nt}\Lconv \tilde\phi)(T) = ( u_{nt}\Lconv (\genk\Lconv \tilde\phi))(T) =  \int_0^T u_{nt}(s)\, (\genk\Lconv \tilde \phi)(T-s) \ds,
	\]
	where $\tilde \phi(s) = \phi(T-s)$ for $s\in(0,T)$. Thus
	\begin{align}
	\int_0^T \genk\Lconv u_{nt} \,\phi \ds &= \int_0^T u_{n}(s)\, (\genk\Lconv \tilde\phi)_t(T-s) \ds - u_n(0)  (\genk\Lconv \tilde\phi)(T).
	\end{align}
	Passing to the limit with the aid of the embedding $X_\genk^p\hookrightarrow C[0,T]$, we obtain 
	\[
	\int_0^T g \,\phi \ds = \int_0^T u(s)\, (\genk\Lconv \tilde\phi)_t(T-s) \ds - u(0) (\genk\Lconv \tilde\phi)(T).
	\]
	We then reverse the operations on the right-hand side to find
	\[
	\int_0^T \genk\Lconv u_t \,\phi \ds = \int_0^T g \,\phi \ds.
	\]
	Thus, $u \in X^p_\genk$, $\genk\Lconv u_t = g$ weakly, and $\|u_n-u\|_{X_\genk^p} \to 0$ as $n\to\infty$.	
	
	For $1<p<\infty$, it can be shown that the space $X_\genk^p$ is reflexive, for example, by using the idea of an isometric operator; e.g., \cite[Proposition 8.1]{brezis2010functional}. 
	The compactness of the unit ball is obtained through the Eberlein--\v{S}mulian theorem \cite[Theorem 11.8]{clason2020introduction}. The space $X_\genk^p$ is also separable for all $1\leq p<\infty$.
	
	For $p=\infty$, let us consider a sequence $(u_n)_{n\geq1}$ in $B_\genk^\infty$. Then $(u_n, (\genk\Lconv u_{n})_t)_{n\geq1}$ is a sequence in \(L^\infty(0,T)\times L^\infty(0,T) = (L^1(0,T)\times L^1(0,T))^* \). By the Banach--Alaoglu theorem, $$(u_n, (\genk\Lconv u_{n})_t)_{n\geq1} \stackrel{}{\relbar\joinrel\rightharpoonup} (u,g) \quad \textrm{weakly-$*$ in} \quad (L^1(0,T)\times L^1(0,T))^*.$$
	Similarly to the proof of completeness, we can show that $\genk\Lconv u_t = g$ weakly, which concludes the proof.
	\qed
\end{proof}
Later on, we will additionally need compact embeddings using the space $X^p_\genk$. The following proposition is inspired by \cite[Theorem 3.1.1]{zheng2004nonlinear} as well as by \cite[Theorem 5]{simon1986compact}.

\begin{lemma}[Compact embedding using $X_\genk^p$]
	\label{Lemma:Caputo_compact_embedding}
	Let $X$, $Y$, and $Z$ be three Banach spaces such that $X$ and $Z$ are reflexive and $$X\dhookrightarrow Y \hookrightarrow Z.$$
Let $1<p\leq\infty$, and define
\[W_\genk = \{ u \, |\, u \in L^{p}(0,T; X), \ \genk\Lconv u_t \in L^{p}(0,T; Z) \},\] 
where $\genk$ verifies the assumptions of Lemma~\ref{Lemma:Caputo_seq_compact}. 
If $1<p<\infty$, then $W_\genk \dhookrightarrow L^{p}(0,T; Y)$. 

\noindent If $p=\infty$, and $\tilde \genk \in L^q(0,T)$ for some $q>1$, then $W_\genk \dhookrightarrow C([0,T]; Y)$. 
\end{lemma}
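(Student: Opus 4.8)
The plan is to reduce Lemma~\ref{Lemma:Caputo_compact_embedding} to the classical Aubin--Lions--Simon compactness theory, converting the nonlocal ingredient $\genk\Lconv u_t$ into a genuine time-regularity statement for $u$ itself by means of the resolvent $\tilde\genk$. The structural starting point is the representation established in the proof of Lemma~\ref{Lemma:Caputo_seq_compact}: for $u\in W_\genk$, setting $g:=\genk\Lconv u_t\in L^p(0,T;Z)$ and using $\tilde\genk\Lconv\genk=1$, one has
\[
  u(t)=u(0)+(\tilde\genk\Lconv g)(t)\qquad\text{for a.e. }t\in(0,T),
\]
where, since $\tilde\genk\in L^{p'}(0,T)\subset L^1(0,T)$, Young's convolution inequality gives $\tilde\genk\Lconv g\in L^p(0,T;Z)$ and hence $\|u(0)\|_Z\lesssim\|u\|_{L^p(0,T;X)}+\|g\|_{L^p(0,T;Z)}$. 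I will also invoke Ehrling's lemma: since $X\dhookrightarrow Y\hookrightarrow Z$, for every $\varepsilon>0$ there is $C_\varepsilon>0$ with $\|v\|_Y\le\varepsilon\|v\|_X+C_\varepsilon\|v\|_Z$ for all $v\in X$.

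The heart of the argument, and the step I expect to be the main obstacle, is a family-uniform modulus of continuity in time. Let $\mathcal B\subset W_\genk$ be bounded, say $\|u\|_{L^p(0,T;X)}+\|g\|_{L^p(0,T;Z)}\le M$ for $u\in\mathcal B$. Extending $\tilde\genk$ and $g$ by zero outside $(0,T)$ and writing $F:=\tilde\genk\Lconv g$ on $\R$, the constant $u(0)$ cancels in time differences, so that on $(0,T-h)$ we have $u(\cdot+h)-u(\cdot)=F(\cdot+h)-F(\cdot)=\big(\tilde\genk(\cdot+h)-\tilde\genk\big)\Lconv g$, whence Young's inequality yields
\[
  \|u(\cdot+h)-u(\cdot)\|_{L^p(0,T-h;Z)}\le\|\tilde\genk(\cdot+h)-\tilde\genk\|_{L^1(\R)}\,\|g\|_{L^p(0,T;Z)}.
\]
Since $\tilde\genk\in L^1(\R)$, continuity of translations in $L^1$ gives $\sup_{u\in\mathcal B}\|u(\cdot+h)-u(\cdot)\|_{L^p(0,T-h;Z)}\to0$ as $h\to0^+$. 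The delicate point is exactly that one wants to trade control of the possibly singular nonlocal operator $\genk\Lconv u_t$ for a uniform translation estimate on $u$; the integrability of the resolvent is precisely what makes this work.

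For $1<p<\infty$, $\mathcal B$ is bounded in $L^p(0,T;X)$ with $X\dhookrightarrow Y$ compact and $Y\hookrightarrow Z$ continuous, and the displayed translation estimate furnishes the required equicontinuity in $L^p(0,T-h;Z)$. By the Simon compactness theorem (\cite[Theorem 5]{simon1986compact}; see also \cite[Theorem 3.1.1]{zheng2004nonlinear}), $\mathcal B$ is relatively compact in $L^p(0,T;Y)$; together with the trivial continuous embedding $\|u\|_{L^p(0,T;Y)}\lesssim\|u\|_{L^p(0,T;X)}$ this proves $W_\genk\dhookrightarrow L^p(0,T;Y)$. Note that for compactness of the embedding it is not necessary to identify the limit inside $W_\genk$.

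For $p=\infty$ one uses the extra hypothesis $\tilde\genk\in L^q(0,T)$ with $q>1$. Young's inequality with the conjugate exponent $q'$ (together with $L^\infty(0,T;Z)\subset L^{q'}(0,T;Z)$) gives $F=\tilde\genk\Lconv g\in C([0,T];Z)$ with $F(0)=0$, and, replacing $L^1$ by $L^q$ above, $\sup_{u\in\mathcal B}\|u(\cdot+h)-u(\cdot)\|_{L^\infty(0,T-h;Z)}\le T^{1/q'}M\,\|\tilde\genk(\cdot+h)-\tilde\genk\|_{L^q(\R)}\to0$; thus the elements of $\mathcal B$ are uniformly equicontinuous into $Z$. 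Feeding this modulus and the bound $\|u\|_{L^\infty(0,T;X)}\le M$ into Ehrling's lemma produces a uniform modulus of continuity into $Y$, so $\mathcal B\subset C([0,T];Y)$ and $\mathcal B$ is equicontinuous there. For pointwise relative compactness, fix $t\in[0,T]$, pick $s_k\to t$ with $u(s_k)\in X$ and $\|u(s_k)\|_X\le M$, and use reflexivity of $X$: a subsequence of $(u(s_k))_k$ converges weakly in $X$, and its limit must coincide with the strong limit $u(t)$ in $Y$, so $u(t)\in X$ with $\|u(t)\|_X\le M$; hence $\{u(t):u\in\mathcal B\}$ is bounded in $X$ and therefore relatively compact in $Y$. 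The Arzel\`a--Ascoli theorem then yields relative compactness of $\mathcal B$ in $C([0,T];Y)$, and Ehrling's lemma once more gives continuity of $W_\genk\hookrightarrow C([0,T];Y)$, completing the proof. Besides the translation estimate, the second delicate point is exactly this propagation of the almost-everywhere $X$-bound to every time level, which is what places the limit in $C([0,T];Y)$ rather than merely in $L^\infty(0,T;Y)$.
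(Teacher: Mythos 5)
Your proof is correct and rests on the same central idea as the paper's: use the resolvent representation $u(t)=u(0)+\tilde\genk\Lconv(\genk\Lconv u_t)(t)$ to convert the nonlocal bound on $\genk\Lconv u_t$ into a uniform time-translation modulus for $u$ in $Z$, and then feed this into Aubin--Lions--Simon compactness. The differences are in packaging rather than substance. For $1<p<\infty$ the paper defers to \cite[Theorem 3.1.1]{zheng2004nonlinear} after noting $u\in C([0,T];Z)$, whereas you apply \cite[Theorem 5]{simon1986compact} directly with the translation estimate; your route has the advantage of treating both exponent ranges by one theorem, and your derivation of the translation bound via zero-extension and full-line Young's inequality, $\|u(\cdot+h)-u(\cdot)\|_{L^p(0,T-h;Z)}\le\|\tilde\genk(\cdot+h)-\tilde\genk\|_{L^1(\R)}\|\genk\Lconv u_t\|_{L^p(0,T;Z)}$, packages in one line what the paper obtains by splitting into the boundary term $\|\tilde\genk\|_{L^1(t,t+h)}$ and the interior translation term. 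For $p=\infty$ the paper simply cites Simon's theorem (whose conclusion for $p=\infty$ is already relative compactness in $C([0,T];B)$), while you re-derive that conclusion by hand via Ehrling's lemma and Arzel\`a--Ascoli; the one genuinely nontrivial extra step you supply --- propagating the a.e.\ bound $\|u(t)\|_X\le M$ to every $t$ using reflexivity of $X$ and uniqueness of weak limits in $Z$ --- is correct and is exactly what justifies pointwise relative compactness in $Y$, a point the paper leaves inside the cited reference. Both arguments are sound; yours is more self-contained, the paper's is shorter.
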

\begin{proof}
	Let $(u_n)_{n\geq1}$ be a (uniformly-in-$n$) bounded sequence in $W_\genk$. Without loss of generality, we may assume that $(u_n)_{n\geq1} \subset B_{W_\genk}$, where $B_{W_\genk}$ is the unit ball of $W_\genk$.
	
	Let $n\geq 1$. For the case $1 < p < \infty$, the proof follows by emulating \cite[Theorem 3.1.1]{zheng2004nonlinear} where the compact embedding of the Sobolev-like space \[\big\{  u \in L^{p}(0,T; X) \ \big| \ u_t \in L^{p}(0,T; Z) \big\}\] is studied. 
	The idea there is to argue that since $u_n \in L^p(0,T; Z)$ and $u_{nt} \in L^p(0,T; Z)$ then $u_n \in C([0,T]; Z)$. Note that we can argue a similar property on $u_n \in W_\genk$ owing to the embedding obtained in Lemma~\ref{Lemma:Caputo_seq_compact}:
	\[X_\genk^p(0,T; Z) \hookrightarrow C([0,T]; Z).\] 
	The rest of the proof follows then directly from the steps of \cite[Theorem 3.1.1]{zheng2004nonlinear} so we omit the details here.
	
	For the case $p = \infty$, our main ingredient is \cite[Theorem 5]{simon1986compact}, where a sufficient criterion  \cite[Eq. (8.3)]{simon1986compact} is given for the compact embedding to hold:
	\begin{equation}\label{time_criterion}
		\|u(t+h) - u(t)\|_{L^\infty(0,T-h;Z)} \to 0 \quad \textrm{as } h\searrow0, \quad \textrm{uniformly for $u$ in } B_{W_\genk}.
	\end{equation}
Note that the time-translation criterion~\eqref{time_criterion} can be interpreted as~\cite[Remark 3.1]{simon1986compact}:
$$\forall \varepsilon>0, \,\exists \eta \textrm{ such that: }\forall u \in B_{W_\genk},\, \forall h<\eta \textrm{ one has } \|u(t+h) - u(t)\|_{L^\infty(0,T-h;Z)}\leq \varepsilon.$$ 

Let $u \in B_{W_\genk}$. To show \eqref{time_criterion}, we use the expression 
\[u(t) = \tilde \genk \Lconv \genk \Lconv u_t(t) + u(0),\]
which is justified by Lemma~\ref{Lemma:Caputo_seq_compact}. For convenience, we will denote again $g = \genk \Lconv u_t$.
Thus, for an arbitrary $h>0$, we can write
	\begin{align}
		u(t+h) - u(t) &\,= \tilde \genk \Lconv g (t+h) - \tilde \genk \Lconv g (t)
		\\ &\,= \int_t^{t+h} \tilde \genk (t+h-s) g(s) \ds + \int_0^{t} (\tilde \genk (t+h-s) - \tilde \genk (t-s)) g(s) \ds.
	\end{align}
	It then follows that 
	\begin{align}
		&\|u(t+h) - u(t)\|_{L^\infty(0,T-h;Z)}\\  
		\leq & \Big(\esssup_{t\in (0,T-h)}\|\tilde \genk(\cdot)\|_{L^1(t,t+h)} + \esssup_{t\in (0,T-h)}\|\tilde \genk (t+h-\cdot) - \tilde \genk (t-\cdot)\|_{L^1(0,t)}\Big) \|g\|_{L^\infty(0,T-h;Z)}
		\\  
		\leq & \esssup_{t\in (0,T-h)}\|\tilde \genk(\cdot)\|_{L^1(t,t+h)} + \esssup_{t\in (0,T-h)}\|\tilde \genk (\cdot+h) - \tilde \genk (\cdot)\|_{L^1(0,t)},
	\end{align}	
	where in the last line we have used that $ \|g\|_{L^\infty(0,T-h;Z)} \leq 1$ since $u\in B_{W_\genk}$. In this last expression, $u$ and $g$ no longer appear and we can be sure that the convergence is uniform for $u$ in $B_{W_\genk}$.
	
	Because $\tilde \genk \in L^r(0,T) \subset L^1(0,T)$, and translation is continuous on $L^1(0,T)$, we infer that:
	\begin{equation}
	\esssup_{t\in (0,T-h)} \|\tilde \genk (\cdot+h) - \tilde \genk (\cdot)\|_{L^1(0,t)}  \to 0 \quad \textrm{as } h\searrow0.
	\end{equation}
	On the other hand, we have
	\[\esssup_{t\in (0,T-h)}\|\tilde \genk\|_{L^1(t,t+h)} = \esssup_{t\in (0,T-h)} h^{\frac{r-1}r} \|\tilde \genk \|_{L^r(t,t+h)} \to 0 \quad \textrm{as } h\searrow0.\]
	Thus, \eqref{time_criterion} holds which finishes the proof.
	\qed
\end{proof}
\begin{remark}[$X_\genk^p$ as a generalization of $W^{1,p}(0,T)$]
	Note that the requirements $\genk \in \mm(0,T)$
	as well as the existence of an $L^{p'}$-regular resolvent are flexible enough and allow us to see $X_\genk^p$ as a generalization of $W^{1,p}(0,T)$. 
	In particular if $\genk=\delta_0$, then $\tilde \genk = 1$ and $X_\genk^p = W^{1,p}(0,T)$.
	
	In general, for a kernel $\genk\in \mm(0,T)$ satisfying the assumptions of Lemma~\ref{Lemma:Caputo_seq_compact}, we have the inclusion:
	\[W^{1,p}(0,T) \subset X_\genk^p,\]
	owing to \cite[Theorem 3.6.1]{gripenberg1990volterra}.
\end{remark}

\section{\texorpdfstring{$\tau$}{tau}-uniform well-posedness analysis}\label{Sec:uniform_wellp}
We have now built the theoretical scaffolding to support our analysis. We next discuss the assumptions which relate to the properties of the kernels $\frakKone$ and $\frakKtwo$. Thereafter, we discuss the uniform-in-$\tau$ well-posedness of equation \eqref{eqn:first_eq}.
\subsection{Assumptions on the memory kernels} 
We formulate in this section the assumptions on the kernels needed for the upcoming analysis. The assumptions are verified for the wave models of interest which include the Moore--Gibson--Thompson equation as well as its fractionally relaxed counterparts contained in Table~\ref{table:r_kernels}. We first assume that
\leqnomode
\begin{equation}
	\label{eq:boundedness_assumption}
	\tag{$\bf\mathcal{A}_0$} 
	\begin{aligned}
		\frakKone \in \mm(0,T).
	\end{aligned} 
\end{equation}
This implies the boundedness of the operator
\begin{equation}
	\begin{aligned}
		\operatorname{T}_{\frakKone}\ : L^p(0,T) &\rightarrow L^p(0,T) 
		\\
	u&\mapsto \frakKone\Lconv u
	\end{aligned} 
\end{equation}
for all $1 \leq p \leq \infty$. The boundedness constant is given by the total variation norm $\|\frakKone\|_{\mm(0,T)}$; see \cite[Chapter 3]{gripenberg1990volterra} for more details.
\\
\indent When it comes to the resolvent of the leading measure, we assume the following:
\begin{equation}\label{eqn:resolvent}
	\textrm{there exists} \quad \tfrakKone \in L^{q}(0,T) \quad\textrm{for some} \quad q>1 \quad \textrm{such that} \quad \tfrakKone \Lconv \frakKone = 1  \tag{$\bf\mathcal{A}_1$}. 
\end{equation}
The assumption $q>1$ is needed so as to be able to use the compact embedding results of Lemma~\ref{Lemma:Caputo_compact_embedding} when showing that the solution of \eqref{eqn:first_eq} attains initial conditions; see proof of Theorem~\ref{Prop:Wellposedness_a_g_b}. The expression of the resolvent for the kernels/measures of interest is given in Table~\ref{table:r_kernels}.
\\
\indent We further assume that
\begin{equation}\label{eq:causality_eq}
	\textrm{there exists} \quad \mathfrak{r} \in L^1(0,T) \quad \textrm{such that} \qquad \frakKtwo = \mathfrak{r} \Lconv \frakKone. \tag{$\bf\mathcal{A}_2$} 
\end{equation}
This is the case for all (f)MGT kernels as can be seen from Table~\ref{table:r_kernels}.
In the general case, $\frakR$ can be found by viewing it as a solution to a Volterra integral equation of the first kind. One may relax \eqref{eq:causality_eq}, by allowing for $\frakR$ to be a finite measure on $(0,T)$ at the cost of increased technicality. We will not pursue this relaxation as it is not needed in the present setting.\\
\indent Note that as a result of \eqref{eq:boundedness_assumption} and \eqref{eq:causality_eq}, we obtain that $\frakKtwo \in L^1(0,T)$. Thus the operator $\operatorname{T}_{\frakKtwo}$ is bounded as well.

We assume the following ``positivity" properties form the kernels $\frakR$ and $\frakKone$:
\begin{equation}\label{eqn:causality_aaumption}
		\int_0^t (\mathfrak{r}*y)(s)y(s)\ds
		\geq 0, \quad y\in L^2(0,t)   \tag{$\bf\mathcal{A}_3$} ,
	\end{equation}
	\begin{equation}\label{eqn:k1_positivity_lin}
		\int_0^t (\frakKone \Lconv y)(s)y(s)\ds
		\geq 0, \quad y\in L^2(0,t)   \tag{$\bf\mathcal{A}_4$}.
\end{equation}
These assumptions are standard for fractional-type kernels and can be verified using a Fourier transform along the lines of \cite[Assumption \ensuremath{\bf {{A}}^{\textup{weak}}_1}]{kaltenbacher2022limiting}. 

We will further need some coercivity assumptions on $\frakKtwo$. \label{discussion_k2_assumption}
From Table~\ref{table:r_kernels}, we distinguish two cases on the behavior of $\frakKtwo$ as
$t\searrow 0$. Either the limit is $\infty$ or a finite nonnegative value. This inspires two alternative assumptions. 
\begin{itemize}
	\item  We assume that (at least) one of the following two assumptions holds:
	\begin{equation}\label{eqn:positivity}
		\int_0^t (\frakKtwo*y_t)(s)(y)(s)\ds
		\geq - C_{\frakKtwo} |y(0)|^2
		, \quad y\in C[0,t] \textrm{ and } \frakKtwo*y_t\in L^1(0,t) \tag{$\bf\mathcal{A}_5$}.
	\end{equation}
	Notice that $X_{\frakKone}^{p}(0,t)$ with $1\leq p\leq \infty$ provides the right space for this assumption. Indeed, if $y\in X_{\frakKone}^{p}(0,t)$, then $y\in C[0,t]$ by Lemma~\ref{Lemma:Caputo_seq_compact}, and $$\|\frakKtwo*y_t\|_{L^1(0,t)} \leq \|\frakR\|_{L^1(0,t)}\|\frakKone*y_t\|_{L^1(0,t)}.$$
	\item Alternatively, instead of using \eqref{eqn:positivity}, we may assume 
	\begin{equation}\label{eqn:positivity_nonsing}
		\tag{${\bf\mathcal{A}_5^{\textrm{alt}}}$}
		\begin{multlined}
			\frakKtwo \in W^{1,1}(0,T)\hookrightarrow C[0,T],\ \  \frakKtwo(0)\geq0, \ \textrm {and }\\
		\int_0^t (\mathfrak{K}_{2t}*y)(s)(y)(s)\ds
		\geq 0
		, \quad y\in L^2(0,t).
			\end{multlined}
	\end{equation}
	Here the idea is to use the following rewriting in the analysis:
	\begin{equation}\label{eqn:rewriting_k2}
	\frakKtwo*y_t+\frakKtwo\ y(0) =(\frakKtwo*y)_t = \mathfrak{K}_{2t}*y + \frakKtwo(0) y
	\end{equation}
	when $\frakKtwo$ is smooth, together with this assumption.
\end{itemize} 
For general kernels, assumption \eqref{eqn:positivity} can be verified along the lines \cite[Lemma B.1]{kaltenbacher2021determining} (using a density argument as it is stated in \cite{kaltenbacher2021determining} for $y\in W^{1,1}(0,t)$), while for \eqref{eqn:positivity_nonsing} one can use the Fourier transform employed in \cite[Assumption \ensuremath{\bf {{A}}^{\textup{weak}}_1}]{kaltenbacher2022limiting}.
In particular, \eqref{eqn:positivity} holds for heat kernels of wave models fMGT I and fMGT III, while \eqref{eqn:positivity_nonsing} holds for those of MGT, fMGT II, and fMGT. Combined, these two assumptions cover all the equations of interest in this paper. Note that if $y(0) = 0$, then \eqref{eqn:positivity_nonsing} implies \eqref{eqn:positivity}. 
\reqnomode
\subsection{(Uniform) well-posedness in the case $\aaa\geq\bbb$ }\label{Sec:a_g_b}
We aim here to provide a uniform-in-$\tau$ well-posedness result for
the general equation of interest given by \eqref{eqn:first_eq}
\begin{equation}
	\big(\tau^a\frakKone \Lconv \psi_{tt})_{t} + \aaa\psi_{tt} - c^2  \tau^a\frakKone \Lconv \Delta\psi_t - c^2 \bbb \Delta \psi -   \delta \rt^b \frakKtwo \Lconv \Delta\psi_{tt}   = f,
\end{equation}
supplemented with appropriate initial and boundary data. Let the coefficients in the equation be such that $\aaa \geq \bbb>0$.
The main idea of the upcoming proof is to use the two following equivalent rewritings of \eqref{eqn:first_eq}:
\begin{equation}\label{eq:nonunfiorm_a_geq_b}
	\big(\tau^a\frakKone \Lconv \psitt + \aaa\psit\big)_{t} - \frac\bbb\aaa c^2 \Delta \big(\tau^a\frakKone \Lconv \psi_t + \aaa\psi \big) - \frac{\aaa-\bbb}\aaa c^2  \tau^a\frakKone \Lconv \Delta\psi_t -   \delta \rt^b \frakKtwo \Lconv \Delta\psi_{tt} = f,
\end{equation}
and
\begin{equation}\label{eq:nonunfiorm_a_geq_b_2nd_option}
	\big(\tau^a\frakKone \Lconv \psitt + \bbb\psit\big)_{t} + (\aaa-\bbb)\psi_{tt} - c^2 \Delta \big(\tau^a\frakKone \Lconv \psi_t + \bbb\psi \big) -   \delta \rt^b \frakKtwo \Lconv \Delta\psi_{tt} = f.
\end{equation}
This way, the model can be seen as a damped wave equation for $\tau^a\frakKone \Lconv \psi_t + \aaa\psi$ and, alternatively, for $\tau^a\frakKone \Lconv \psi_t + \bbb\psi$, provided that the leading term satisfies
\begin{align}\label{eq:relation_psi1}
\big(\tau^a\frakKone \Lconv \psitt \big)_{t} = \big(\tau^a\frakKone \Lconv \psi_t\big)_{tt}.
\end{align}
With this view, $\big(\tau^a\frakKone \Lconv \psi_t + \aaa\psi\big)_t$ and  $\big(\tau^a\frakKone \Lconv \psi_t + \bbb\psi\big)_t$ are natural test functions. 

Equality \eqref{eq:relation_psi1} can be ensured by requiring that $\psit(0) = 0$ owing to \cite[Corollary 3.7.3]{gripenberg1990volterra}. The identity is stated there for an absolutely continuous function (i.e., for $\psi_t\in W^{1,1}(0,T)$), but retracing the proof of \cite[Theorem 3.7.1]{gripenberg1990volterra} shows that it is sufficient that $\psit$ be continuous and that $\tau^a\frakKone \Lconv \psitt \in L^1(0,T)$, both of which we ensure below through the spaces $X_{\taua\frakKone}^p(0,T)$ (with $p=\infty$, except in Proposition~\ref{Prop:wellposedness_GFE_laws} where $p=2$). 

The assumption that one of the initial conditions should be zero can often be found in the analysis of fractional PDEs; see e.g., \cite[Section 7]{kaltenbacher2022time} and \cite[Proposition 3.2]{kaltenbacher2021determining} for a similar requirement. Note that $\psit(0) = 0$ is not needed if $\frakKone = \delta_0$ (MGT, and fMGT III models), as then relation \eqref{eq:relation_psi1} always holds.\label{discussion:psit0}

For the coming results we define the solution space
\begin{equation}
	\calX^\infty = \{ \psi \in X_{\taua\frakKone}^\infty(0,T; \Honezero)\,|\, \psit \in X_{\taua\frakKone}^\infty(0,T; L^2(\Omega))\},
\end{equation}
where $X_{\taua\frakKone}^\infty$ is defined in \eqref{def_Xfp}. This space fulfills the requirements of Lemma~\ref{Lemma:Caputo_seq_compact} thanks to \eqref{eqn:resolvent}.
The next result establishes well-posedness of an initial-value-boundary problem of \eqref{eqn:first_eq}. The uniformity in $\tau$ of the result will depend on whether $\aaa$ is strictly larger than $\bbb$.

\begin{theorem}\label{Prop:Wellposedness_a_g_b}
	Let $T>0$, $\aaa\geq \bbb >0$, and $\tau>0$. 
	Let Assumptions \eqref{eq:boundedness_assumption}
	--\eqref{eqn:k1_positivity_lin}, and \eqref{eqn:positivity} or \eqref{eqn:positivity_nonsing} (see discussion above) hold. Then, given initial data \[(\psi_0,\psi_1,\psi_2^{\frakKone}) \in H_0^1(\Omega) \times \{0\}\times L^2(\Omega)\] and a source term $f\in L^1(0,T;L^2(\Omega))$, 
	there is a unique $\psi \in \calX^\infty$ which solves 
	\begin{equation}\label{eqn:ibvp_eqn}
		\begin{aligned}
			\begin{multlined}[t] -\taua\intT \big(\frakKone * \psitt , v_t\big)_{L^2} \ds - \intT(\aaa\psit, v_t)_{L^2}\ds + c^2  \intT(\bbb \nabla \psi, \nabla v)_{L^2}\ds \\
				+\tau^a c^2 \intT\big(\frakKone \Lconv \nabla \psit , \nabla v\big)_{L^2}\ds - \rt^b \delta  \intT(\frakKtwo \Lconv \nabla \psit, \nabla v_t)_{L^2} \ds
				\\= - \taua \big(\psitwo , v(0)\big)_{L^2} + \intT(f, v)_{L^2} \ds,
			\end{multlined}	
		\end{aligned}		
	\end{equation}
	for all $v\in H^1(0,T;H_0^1(\Omega))$ such that $v(T) = 0$, with
	\[ (\psi,\psit) \Big|_{t=0} = (\psi_0,0).\]
	Furthermore, for almost all $t\in(0,T)$, the solution satisfies
	\begin{equation}
		\begin{multlined}
			\|\tau^a \frakKone\Lconv\psi_{tt}(t) \|^2_{L^{2}(\Omega)}+
			 \|\psi_{t}(t)\|^2_{L^2(\Omega)}+  \|\tau^a \frakKone\Lconv\nabla\psi_t(t)\|^2_{L^2(\Omega)}+\| \nabla \psi(t)\|^2_{L^2(\Omega)}\\ \lesssim \tau^{2a} \|\psitwo\|^2_{L^2(\Omega)}+\|\nabla\psi_0\|^2_{L^2(\Omega)}+\|f\|^2_{L^1(0,t;L^2(\Omega))},
		\end{multlined}
	\end{equation} 
where the hidden constant is independent of $\delta$ and $T$.
If $\aaa > \bbb$, then the hidden constant is also independent of $\tau$.
\end{theorem}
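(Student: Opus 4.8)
The plan is to prove existence by a Galerkin scheme, derive the stated energy estimate at the discrete level, pass to the limit with the compactness results of Section~\ref{Sec:preliminaries}, and then obtain uniqueness from linearity. First I would fix a basis $(\phi_j)_{j\ge1}$ of $\Honezero$ (say the Dirichlet Laplacian eigenfunctions) and look for $\psi^{(n)}=\sum_{j=1}^n c_j(t)\phi_j$ solving the equations obtained by projecting \eqref{eqn:first_eq} onto $\mathrm{span}\{\phi_1,\dots,\phi_n\}$, with $c_j(0)=(\psi_0,\phi_j)_{\Ltwo}$, $\dot c_j(0)=0$ (encoding $\psi_1=0$), and $(\frakKone\Lconv\psi^{(n)}_{tt})(0)$ equal to the $\mathrm{span}\{\phi_j\}$‑projection of $\psitwo$. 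Because $\psi_1=0$, relation \eqref{eq:relation_psi1} already holds at the discrete level, so the leading term reads $(\tau^a\frakKone\Lconv\psi^{(n)}_t)_{tt}$; using the resolvent $\tfrakKone$ from \eqref{eqn:resolvent} to invert the convolution operator $\tau^a\,\delta_0+\aaa\,\tfrakKone$ (whose resolvent exists by standard Volterra theory), the projected system decouples into scalar \emph{linear} Volterra integro-differential equations with $L^1$ kernels, which have unique global-in-time solutions; hence $\psi^{(n)}$ is defined on $[0,T]$.

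\emph{Energy estimate.} Using rewriting \eqref{eq:nonunfiorm_a_geq_b}, which presents \eqref{eqn:first_eq} as a damped wave equation for $W:=\tau^a\frakKone\Lconv\psi_t+\aaa\psi$, I would test the $n$-th projected equation with $W^{(n)}_t=\tau^a\frakKone\Lconv\psi^{(n)}_{tt}+\aaa\psi^{(n)}_t$ (an admissible element of $\mathrm{span}\{\phi_j\}$ for each $t$) and integrate over $(0,t)$. The leading and $-\tfrac{\bbb}{\aaa}c^2\Delta W$ contributions give $\tfrac12\|W^{(n)}_t(t)\|_{\Ltwo}^2$ and $\tfrac{\bbb c^2}{2\aaa}\|\nabla W^{(n)}(t)\|_{\Ltwo}^2$; the term $-\tfrac{\aaa-\bbb}{\aaa}c^2\tau^a\frakKone\Lconv\Delta\psi^{(n)}_t$ yields $\tfrac{(\aaa-\bbb)c^2\tau^{2a}}{2\aaa}\|\frakKone\Lconv\nabla\psi^{(n)}_t(t)\|_{\Ltwo}^2$ plus a term nonnegative after time integration by \eqref{eqn:k1_positivity_lin}; and the strong‑damping term, after using \eqref{eq:causality_eq} to write $\frakKtwo\Lconv\nabla\psi^{(n)}_{tt}=\frakR\Lconv(\frakKone\Lconv\nabla\psi^{(n)}_t)_t$ and then \eqref{eqn:causality_aaumption} together with \eqref{eqn:positivity} — or the rewriting \eqref{eqn:rewriting_k2} combined with \eqref{eqn:positivity_nonsing} — contributes only nonnegative quantities, where $\psi^{(n)}_t(0)=0$ is crucially used to make the lower bound $-C_{\frakKtwo}|y(0)|^2$ in \eqref{eqn:positivity} equal to zero. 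At $t=0$ all convolution terms but $(\frakKone\Lconv\psi^{(n)}_{tt})(0)$ vanish, so the initial energy is bounded by $\tau^{2a}\|\psitwo\|_{\Ltwo}^2+\|\nabla\psi_0\|_{\Ltwo}^2$; since $f\in L^1(0,T;\Ltwo)$ the source is absorbed by the standard $\sup_{s\le t}\|W^{(n)}_s(s)\|_{\Ltwo}$‑argument, so no Grönwall inequality is needed. This gives
\[
\|W^{(n)}_t(t)\|_{\Ltwo}^2+\|\nabla W^{(n)}(t)\|_{\Ltwo}^2 \lesssim \tau^{2a}\|\psitwo\|_{\Ltwo}^2+\|\nabla\psi_0\|_{\Ltwo}^2+\|f\|_{L^1(0,t;\Ltwo)}^2,
\]
with a constant depending only on $\aaa,\bbb,c$, in particular independent of $\tau$, $\delta$ and $T$.

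\emph{Separating the four norms, passage to the limit, uniqueness.} To reach the form stated in the theorem one recovers $\tau^a\frakKone\Lconv\psi^{(n)}_{tt}$, $\psi^{(n)}_t$, $\tau^a\frakKone\Lconv\nabla\psi^{(n)}_t$, $\nabla\psi^{(n)}$ individually. If $\aaa=\bbb$ this is done by inverting the leading convolution operator via $\tfrakKone$ once more, which unavoidably introduces $\tau$‑dependent constants. If $\aaa>\bbb$, one keeps the coercive term $\tau^{2a}\|\frakKone\Lconv\nabla\psi^{(n)}_t(t)\|_{\Ltwo}^2$ on the left and, in parallel, tests the second rewriting \eqref{eq:nonunfiorm_a_geq_b_2nd_option} with $\tau^a\frakKone\Lconv\psi^{(n)}_{tt}+\bbb\psi^{(n)}_t$: the extra $(\aaa-\bbb)\psi^{(n)}_{tt}$ term then produces $\tfrac{(\aaa-\bbb)\bbb}{2}\|\psi^{(n)}_t(t)\|_{\Ltwo}^2$ on the left (plus another nonnegative contribution from \eqref{eqn:k1_positivity_lin}), and triangle inequalities bound all four quantities by the right‑hand side above with a $\tau$‑independent constant. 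The bounds show $(\psi^{(n)})_n$ is bounded in $\calX^\infty$; by Lemma~\ref{Lemma:Caputo_seq_compact} a subsequence converges weakly‑$*$ to some $\psi\in\calX^\infty$, and since every term of \eqref{eqn:ibvp_eqn} is linear in $\psi$ it passes to the limit. Lemma~\ref{Lemma:Caputo_compact_embedding} — applicable precisely because $q>1$ in \eqref{eqn:resolvent} — upgrades this to strong convergence in spaces of the form $C([0,T];\cdot)$, so the limit attains $(\psi,\psi_t)|_{t=0}=(\psi_0,0)$ and $\psitwo$ enters \eqref{eqn:ibvp_eqn} as the prescribed third datum; weak lower semicontinuity of the norms transfers the energy estimate to $\psi$. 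Uniqueness follows from linearity: the difference of two solutions has homogeneous data, and re-running the energy computation on it — now testing with a time primitive of $W_t$, since $W_t$ itself is not of the admissible class $H^1(0,T;\Honezero)$ — forces it to vanish.

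\emph{Main obstacle.} I expect the energy estimate to be the crux: choosing the test functions dictated by the two rewritings and controlling every convolution term with exactly the positivity assumptions \eqref{eqn:causality_aaumption}–\eqref{eqn:positivity_nonsing} (which is where $\psi_t(0)=0$ is essential), and, in the case $\aaa>\bbb$, cleanly decoupling the four energy norms with a $\tau$‑independent constant. A secondary technical nuisance is uniqueness, where the natural energy test function is inadmissible and must be replaced by an integrated one.
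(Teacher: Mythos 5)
Your proposal follows essentially the same route as the paper's proof: a Galerkin scheme made solvable by a resolvent-based reduction to a Volterra equation of the second kind, testing the rewriting \eqref{eq:nonunfiorm_a_geq_b} with $z_t=\tau^a\frakKone\Lconv\psi_{tt}+\aaa\psi_t$ together with assumptions \eqref{eqn:causality_aaumption}--\eqref{eqn:positivity_nonsing} (exploiting $\psi_1=0$), a second testing of \eqref{eq:nonunfiorm_a_geq_b_2nd_option} when $\aaa>\bbb$ versus a $\tau$-dependent resolvent bootstrap when $\aaa=\bbb$, limit passage via Lemmas~\ref{Lemma:Caputo_seq_compact} and \ref{Lemma:Caputo_compact_embedding}, and uniqueness via integrated (primitive-in-time) test functions. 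The argument and the points you flag as delicate match the paper's; no gaps.
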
 
\begin{proof}
	We use a standard Galerkin procedure to construct an approximate solution; see, e.g., \cite[Chapter 7]{evans2010partial} and \cite{kaltenbacher2022time}. Given an orthogonal basis $\{\phi_n\}_{n \geq 1}$ of $V=H_0^1(\Om)$, let $V_n=\text{span}\{\phi_1, \ldots, \phi_n\} \subset V$ and
	\begin{equation}
		\begin{aligned}
			\psi^{(n)}(t) = \sum_{i=1}^n \xin (t) \phi_i.
		\end{aligned}	
	\end{equation}
	Choose the approximate initial data
	\begin{equation}
		\psi^{(n)}_0=  \sum_{i=1}^n \xi_i^{(0, n)} \phi_i,\quad \psi^{(n)}_1= 0, \quad  \psi^{{\frakKone}(n)}_2 = \sum_{i=1}^n \xi_i^{(2, n)} \phi_i \in V_n,
	\end{equation}
	such that
	\begin{equation} \label{convergence_approx_initial_data_z_form}
		\begin{aligned}
			\psi^{(n)}_0 \rightarrow \psi_0 \  \text{in} \ H_0^1(\Om), \ \text{and } \, \psi^{(n)}_2 \rightarrow \psi_2 \  \text{in} \ L^2(\Om), \textrm{ as } \ n \rightarrow \infty.
		\end{aligned}
	\end{equation}
	For each $n \in \N$, the system of Galerkin equations is given by
	\begin{equation}
		\begin{aligned}
			\begin{multlined}[t]\taua	\sum_{i=1}^n (\frakKone * \xitt)_t(t) (\phi_i, \phi_j)_{L^2}+ \sum_{i=1}^n \xitt (\aaa\phi_i, \phi_j)_{L^2}+c^2  \sum_{i=1}^n \xin (\bbb \Delta \phi_i, \phi_j)_{L^2} \\
				+\tau^a c^2 \sum_{i=1}^n (\frakKone * \xit)(t) (\nabla \phi_i, \nabla \phi_j)_{L^2} +\rt^b \delta \sum_{i=1}^n (\frakKtwo * \xitt)(t) (\nabla \phi_i, \nabla \phi_j)_{L^2} \\
				= (f(t), \phi_j)_{L^2}
			\end{multlined}	
		\end{aligned}		
	\end{equation}
	for a.e.\ $t \in (0,T)$ and all $j \in \{1, \ldots, n\}$. With $\bxi = [\xin_1 \ \ldots \  \xin_n]^T$, we can write this system in matrix form 
	\begin{equation}
		\left \{	\begin{aligned}
			& \taua M	(\frakKone * \bxitt)_t + M_\aaa \bxitt+ K_\bbb \bxi+\tau^a c^2 K  \frakKone* \bxit + \rt^b \delta K \frakKtwo *\bxitt = \boldsymbol{f}, \\[1mm]
			& (\bxin, \bxit, \frakKone\Lconv\bxitt)\vert_{t=0} = (\boldsymbol{\xi_0}, \boldsymbol{0}, \boldsymbol{\xi_2}^{\frakKone}),
		\end{aligned} \right.
	\end{equation}
	where $(\boldsymbol{\xi_0},  \boldsymbol{\xi_2}^{\frakKone})=([\xi_1^{(0,n)} \, \ldots \, \xi_n^{(0, n)}]^T,\, [\xi_1^{(2,n)} \, \ldots \, \xi_n^{(2, n)}]^T)$.

	To prove that the Galerkin system is uniquely solvable, we introduce the new unknown \[\hat{\boldsymbol\chi} = (\frakKone\Lconv\boldsymbol{\xi}_{tt})_t.\]
	We can then rewrite the semi-discrete fractional derivative system using
	\begin{equation}
		\begin{aligned}
			\boldsymbol\xi_{tt} =& \, {\tfrakKone}\Lconv \hat{\boldsymbol\chi} + \boldsymbol\xi_{2}^{\frakKone}{\tfrakKone}\\
			\boldsymbol\xi_t =& \,1 \Lconv {\tfrakKone}\Lconv \hat{\boldsymbol\chi} + 1 \Lconv {\tfrakKone}\boldsymbol\xi_{2}^{\frakKone} 
			\\
			\boldsymbol\xi =& \,1 \Lconv 1 \Lconv {\tfrakKone}\Lconv \hat{\boldsymbol\chi} + 1 \Lconv 1 \Lconv {\tfrakKone} \boldsymbol\xi_{2}^{\frakKone}
			+ \boldsymbol\xi_0.
		\end{aligned}
	\end{equation}
This yields
\begin{equation}
	\begin{aligned}
		\begin{multlined}[t] \taua	\hat{\boldsymbol\chi}+ M^{-1} M_\aaa {\tfrakKone}* \hat{\boldsymbol\chi} + M^{-1}K_\bbb \, 1*1*{\tfrakKone}*\hat{\boldsymbol\chi}
			+\tau^a c^2 M^{-1}K \, 1* 1*\hat{\boldsymbol\chi}\\+ \rt^b \delta M^{-1}K \, \frakKtwo *{\tfrakKone}* \hat{\boldsymbol\chi}  = {\boldsymbol{\tilde f}}.
		\end{multlined}
	\end{aligned}
\end{equation}
Here, $M$ and $K$ are the mass and stiffness matrices whose entries are given by
\begin{align}
		M_{i,j} = \intO \phi_i\,\phi_j \dx, \qquad K_{i,j} = \intO \nabla\phi_i \cdot \nabla\phi_j \dx.
\end{align}
$M_\aaa$ and $K_\bbb$ are weighted matrices which are simply expressed as 
$M_\aaa = \aaa M$ and $K_\bbb = \bbb K$ (because $\aaa$ and $\bbb $ are constants). Above, the source term has the following form:\\ 
\begin{equation}
	\begin{aligned}
		{\boldsymbol{\tilde f}}=\, 	\begin{multlined}[t]  M^{-1}\boldsymbol{f}-M^{-1} M_\aaa \boldsymbol\xi_{2}^{\frakKone}{\tfrakKone}-M^{-1}K_\bbb(1 \Lconv 1 \Lconv {\tfrakKone} \boldsymbol\xi_{2}^{\frakKone} + \boldsymbol\xi_0)\\-\tau^a c^2 M^{-1}K \,(1 \Lconv 1) \, \boldsymbol\xi_{2}^{\frakKone}  -\rt^b \delta M^{-1}K (\frakKtwo \Lconv{\tfrakKone})\boldsymbol\xi_{2}^{\frakKone}.	\end{multlined} 
	\end{aligned}
\end{equation}
Notice that $\boldsymbol{\tilde f} \in L^1(0,T)$.
	By \cite[Theorem 2.3.5]{gripenberg1990volterra}, the system has a unique solution $\hat{\boldsymbol\chi} \in L^1(0,T)$.
	In turn, we obtain that $\boldsymbol{\xi} \in \{\boldsymbol{u} \in W^{2,1}(0,T) \ |\ \frakKone\Lconv\boldsymbol{u}_{tt} \in W^{1,1}(0,T)\}$.
	Thus, $\psi^{(n)} \in \{u \in W^{2,1}(0,T;V_n) \ |\ \frakKone\Lconv u_{tt} \in W^{1,1}(0,T;V_n)\}$.
\subsubsection*{Energy estimates}
In what follows we drop the superscript $n$ and simply refer to the semi-discrete problem's solution as $\psi$.
To simplify the presentation, let us also introduce the variable 
\[z := \tau^a\frakKone \Lconv \psi_t + \aaa\psi,\]
for which we view \eqref{eq:nonunfiorm_a_geq_b} as a damped wave equation. Introducing the $z$ variable (or some variation thereof) is fairly standard in the analysis of the MGT equation and can be traced back to some of the earliest  papers on the topic; see, e.g.,~\cite{kaltenbacher2011wellposedness}.
 
We test \eqref{eq:nonunfiorm_a_geq_b} with $z_t$ to obtain \color{black}
\begin{equation}\label{eq:energy_est_one}
	\begin{multlined}
		\|z_{t}(s)\|^2_{L^2(\Omega)}\Big|^t_0 + \frac\bbb\aaa c^2 \|\nabla z(s) \|^2_{L^2(\Omega)}\Big|^t_0 + \frac{\aaa-\bbb}\aaa c^2  \intTO  \tau^a\frakKone \Lconv \nabla\psi_t \cdot \nabla z_t \dxs  \\+    \delta \rt^b \intTO  \frakKtwo \Lconv \nabla\psi_{tt} \cdot \nabla z_t \dxs  = \intTO  f\, z_{t} \dxs.
	\end{multlined}
\end{equation}

We bound the third term of the left-hand side from below as follows:
\begin{equation}
	\begin{multlined}
		 \frac{\aaa-\bbb}\aaa c^2  \intTO  \tau^a\frakKone \Lconv \nabla\psi_t \cdot \nabla z_t \dxs\\ = \frac{\aaa-\bbb}\aaa c^2 \|\tau^a\frakKone \Lconv \nabla\psi_t(s)\|^2_{L^2(\Omega)}\Big|^t_0 + (\aaa-\bbb) \,c^2  \intTO  \tau^a\frakKone \Lconv \nabla\psi_t \cdot \nabla\psi_t \dxs \geq 0,
	\end{multlined}
\end{equation}
where the last inequality is obtained using $\aaa-\bbb\geq0$ and \eqref{eqn:k1_positivity_lin}.
For the fourth term in~\eqref{eq:energy_est_one}, we have 
\begin{equation}
	\begin{multlined}
		\delta \rt^b \intTO  \frakKtwo \Lconv \nabla\psi_{tt} \cdot \nabla z_t\\= \delta \rt^b  \underbrace{\intTO\frakR \Lconv \frakKone \Lconv \nabla\psi_{tt} \cdot \big(\tau^a\frakKone \Lconv \nabla\psi_{tt} \big) \dxs }_{\geq 0, \textrm{ due to }\eqref{eqn:causality_aaumption}}+\delta\rt^b\intTO\frakKtwo \Lconv \nabla\psi_{tt} \cdot \aaa\nabla\psi_t \dxs.
	\end{multlined}
\end{equation}
Note that we have enough regularity in the semi-discrete setting ($\frakKone\Lconv\psi_{tt} \in W^{1,1}(0,T;V_n)$) such that using \eqref{eqn:causality_aaumption} is allowed.
Since we are in the setting $\psit(0) = \psi_1 =0$, assumption \eqref{eqn:positivity_nonsing} implies \eqref{eqn:positivity}, such that with either assumption we obtain
\begin{equation}
	\begin{multlined}
	\delta \rt^b \intTO  \frakKtwo \Lconv \nabla\psi_{tt} \cdot \aaa\nabla\psit \dxs \geq 0.
	\end{multlined}
\end{equation}
Further, H\"older's inequality yields
\begin{align}
\intTO  f \, z_t \dxs
\leq\, & \|f\|_{L^1(0,t;L^2(\Omega))} \|z_{t}\|_{L^\infty(0,t;L^2(\Omega))} \\ \leq\,& \frac1{4\varepsilon}\|f\|_{L^1(0,t;L^2(\Omega))}^2 + \varepsilon \|z_{t}\|_{L^\infty(0,t;L^2(\Omega))}^2,
\end{align}
for all $\varepsilon>0$. Here, we have used Young's inequality for the last step.

Piecing the different estimates together with a sufficiently small $\varepsilon$, we obtain 
\begin{equation}
	\begin{multlined}
		\|z_t(t)\|^2_{L^2(\Omega)}+ \frac\bbb\aaa c^2 \|\nabla z (t)\|^2_{L^2(\Omega)}   \lesssim \|f\|^2_{L^1(0,t;L^2(\Omega))} +  \|\big(\tau^a\frakKone \Lconv \psi_{tt}\big)(0)\|^2_{L^2(\Omega)} \\+ \|\nabla\big(\tau^a\frakKone \Lconv \psi_{t}\big)(0)\|^2_{L^2(\Omega)} + \|\nabla\psi(0)\|^2_{L^2(\Omega)}.
	\end{multlined}
\end{equation}

Using \cite[Theorem 3.6.1]{gripenberg1990volterra}, we have a Young's convolution inequality analogous for convolution with measures (where continuity is preserved due to \cite[Corollary 3.6.2]{gripenberg1990volterra} and $\psit(0)= \psi_1=0$), thus 
\[\|\nabla\big(\frakKone \Lconv  \psi_{t}\big)(0)\|_{L^2(\Omega)} \leq \|\frakKone\|_{\mm(0,0)} \|\nabla\psi_{t} (0)\|_{L^2(\Omega))} = 0. \]
Therefore, we obtain the energy estimate:
\begin{equation}
	\begin{multlined}\label{eqn:linear_z_estimate}
	\|z_t(t)\|^2_{L^2(\Omega)}+ \frac\bbb\aaa c^2 \|\nabla z (t)\|^2_{L^2(\Omega)} \lesssim \|f\|^2_{L^1(0,t;L^2(\Omega))} + \tau^{2a}  \|\psitwo\|^2_{L^2(\Omega)} + \|\nabla\psi_0\|^2_{L^2(\Omega)},
	\end{multlined}
\end{equation}
which is uniform in both $n$ and $\tau$. The hidden constant is also independent of $\delta$ and $T$.

To obtain estimates on $\frakKone\Lconv \psi_t$ and $\psi$, we use a bootstrap argument. We distinguish two cases below: either $\aaa = \bbb$ or $\aaa > \bbb$.

\textbullet \ {\it Bootstrap argument when $\aaa = \bbb$.}
Consider the auxiliary problem:
\[\tau^a\frakKone \Lconv \psi_t + \aaa\psi = z \quad\textrm{a.e. in } \Omega,\] with $z \in W^{1,\infty}(0,T;L^2) \cap L^\infty(0,T;H^1)$ and $\psi(0)= \psi_0$. 
Convolving with $\tfrakKone$ yields the following Volterra integral equation of the second kind:
\begin{equation}\label{eq:bootstrap_eq}
\tau^a \psi+ \aaa {\tfrakKone} \Lconv \psi = \tau^a \psi_0  + {\tfrakKone} \Lconv z.
\end{equation}
Thus, according to existence theory of Volterra equations of the second kind~\cite[Theorem 2.3.5]{gripenberg1990volterra}, equation \eqref{eq:bootstrap_eq} has a unique solution which satisfies (using the variation of constants formula given in the cited theorem):
\begin{equation}
	\|\psi\|_{W^{1,\infty}(0,T;L^2(\Omega))} \lesssim \|\psi_0\|_{L^2(\Om)} + \|z\|_{W^{1,\infty}(0,T;L^2(\Omega))},
\end{equation}
and 
\begin{equation}
	\|\psi\|_{L^\infty(0,T;H^1(\Omega))} \lesssim \|\psi_0\|_{H^1(\Om)} + \| z\|_{L^\infty(0,T;H^1(\Omega))},
\end{equation}
where the hidden constant does not depend on $T$. 
Together with \eqref{eqn:linear_z_estimate}, this yields
\begin{equation}
	\begin{multlined}
		\|\tau^a\frakKone \Lconv \psi_{tt} (t)\|^2_{L^2(\Omega)}+ \|\psi_{t}(t)\|^2_{L^2(\Omega)}+ \|\tau^a\frakKone \Lconv \nabla\psi_t(t)\|^2_{L^2(\Omega)} + \|\nabla\psi(t)\|^2_{L^2(\Omega)} \\ \lesssim \|f\|^2_{L^1(0,t;L^2(\Omega))} +  \tau^{2a}\|\psitwo\|^2_{L^2(\Omega)} + \|\nabla\psi_0\|^2_{L^2(\Omega)},
	\end{multlined}
\end{equation}
for almost every $t\in (0,T)$, where the hidden constant is independent of $\delta$ and $T$ but does depend on $\tau$.
\color{black}

\textbullet \ {\it Bootstrap argument when $\aaa > \bbb$.} When $\aaa \neq \bbb$, testing the equation (rewritten as \eqref{eq:nonunfiorm_a_geq_b_2nd_option}) with $\big(\tau^a\frakKone \Lconv \psi_t + \bbb\psi\big)_t$ yields, under the same assumptions as before:
\begin{equation}
	\begin{multlined}\label{eqn:linear_z_estimate_2}
		\|\big(\tau^a\frakKone \Lconv \psi_t + \bbb\psi\big)_{t}(t)\|^2_{L^2(\Omega)}+ c^2 \|(\tau^a\frakKone \Lconv \nabla\psi_t + \bbb\nabla\psi)(t)\|^2_{L^2(\Omega)}  \\\lesssim \|f\|^2_{L^1(0,t;L^2(\Omega))} +  \tau^{2a}\|\psitwo\|^2_{L^2(\Omega)} + \|\nabla\psi_0\|^2_{L^2(\Omega)},
	\end{multlined}
\end{equation}
where the hidden constant does not depend on $\tau$.
Combining \eqref{eqn:linear_z_estimate_2} and \eqref{eqn:linear_z_estimate}, we obtain
\begin{equation}
	\begin{multlined}
		\|\tau^a\frakKone \Lconv \psi_{tt} (t)\|^2_{L^2(\Omega)}+ \|\psi_{t}(t)\|^2_{L^2(\Omega)}+ \|\tau^a\frakKone \Lconv \nabla\psi_t(t)\|^2_{L^2(\Omega)} + \|\nabla\psi(t)\|^2_{L^2(\Omega)} \\ \lesssim \|f\|^2_{L^1(0,t;L^2(\Omega))} +  \tau^{2a}\|\psitwo\|^2_{L^2(\Omega)} + \|\nabla\psi_0\|^2_{L^2(\Omega)},
	\end{multlined}
\end{equation}
where, this time, the hidden constant does neither depend on $\tau$ nor on $\delta$ nor on final time $T$.

\subsubsection*{Passing to the limit}
 From the previous analysis and thanks to Lemma~\ref{Lemma:Caputo_seq_compact}, we conclude that there is a subsequence (not relabeled), such that 
 \begin{alignat}{2}
	\psi^{(n)} &\stackrel{}{\relbar\joinrel\rightharpoonup} \psi \quad &&\textrm{weakly-$*$ in } L^\infty(0,T; H_0^1(\Om)),\\
	\psi^{(n)}_t &\stackrel{}{\relbar\joinrel\rightharpoonup} \psi_t \quad &&\textrm{weakly-$*$ in } L^\infty(0,T; L^2(\Om)),\\
	\frakKone \Lconv \psi^{(n)}_t &\stackrel{}{\relbar\joinrel\rightharpoonup} \frakKone \Lconv \psi_t \quad &&\textrm{weakly-$*$ in } L^\infty(0,T; H_0^1(\Om)),\\
	\frakKone \Lconv \psi^{(n)}_{tt} &\stackrel{}{\relbar\joinrel\rightharpoonup} \frakKone \Lconv \psi_{tt} \quad &&\textrm{weakly-$*$ in } L^\infty(0,T; L^2(\Om)).
	\shortintertext{By $\frakKtwo = \frakR\Lconv \frakKone$ and Young's inequality:}
\frakKtwo \Lconv \psi^{(n)}_{tt} = \frakR \Lconv \frakKone \Lconv \psi^{(n)}_{tt} &\stackrel{}{\relbar\joinrel\rightharpoonup} \frakKtwo \Lconv \psi_{tt} \quad &&\textrm{weakly-$*$ in } L^\infty(0,T; L^2(\Om)),\\
\frakKtwo \Lconv \psi^{(n)}_{t} &\stackrel{}{\relbar\joinrel\rightharpoonup} \frakKtwo \Lconv \psi_{t} \quad &&\textrm{weakly-$*$ in } L^\infty(0,T; \Honezero). 
\end{alignat}
By \eqref{eqn:resolvent} and Lemma~\ref{Lemma:Caputo_compact_embedding}, there is a subsequence (again not relabeled), such that 
\begin{equation}\label{strong_conv_GFEII_z_form}
	\begin{alignedat}{4} 
		\psi^{(n)} &\longrightarrow \psi && \quad \text{ strongly}  &&\text{ in } &&C([0,T]; L^2(\Om),  \\
		\psit^{(n)} &\longrightarrow \psit && \quad \text{ strongly}  &&\text{ in } && C([0,T]; H^{-1}(\Om)).
	\end{alignedat} 
\end{equation} 
This is enough to pass in the weak form to the limit in the semi discrete equation and show that $\psi$ solves:
	\begin{equation}
	\begin{aligned}
		\begin{multlined}[t]- \taua\intT \big(\frakKone * \psitt , v_t\big)_{L^2} \ds-  \intT(\aaa\psit, v_t)_{L^2}\ds + c^2  \intT(\bbb \nabla \psi, \nabla v)_{L^2}\ds \\
			+\tau^a c^2 \intT\big(\frakKone \Lconv \nabla \psit , \nabla v\big)_{L^2}\ds -\rt^b \delta  \intT(\frakKtwo \Lconv \nabla \psit, \nabla v_t)_{L^2} \ds
			\\=  - \taua \big(\psitwo , v(0)\big)_{L^2}  + \intT(f, v)_{L^2} \ds,
		\end{multlined}	
	\end{aligned}		
\end{equation}
for all $v \in H^1(0,T;H_0^1(\Om))$ such that $v(T) = 0$. Here, we have used $\frakKtwo \Lconv \psitt = (\frakKtwo \Lconv \psit)_t$.
Note that due to Lemma~\ref{Lemma:Caputo_seq_compact}, we have  
\begin{equation}\label{continuity_GFEII_z_form}
	\psi \in C([0,T]; H_0^1(\Om)) \quad \psi_t \in C([0,T]; L^2(\Om)).
\end{equation}
From \eqref{convergence_approx_initial_data_z_form}, \eqref{strong_conv_GFEII_z_form}, \eqref{continuity_GFEII_z_form}, and uniqueness of limits we conclude that
\begin{equation}\label{init_data_GFEII_first_attainment}
	(\psi,\psi_t)|_{t=0} = (\psi_0, 0).
\end{equation}

\subsubsection*{Uniqueness}
Assume that initial data $\psi_0 = \psi_1 = \psitwo = 0$  and the source term $f=0$. We want to show that necessarily $\psi = 0$ to prove uniqueness. 
Obviously, if $\psi_t$ and $\frakKone\Lconv \psitt$ were smooth enough, we could test with them similarly to above and the proof would be direct. However, that is not the case. To solve this issue, we view equation~\eqref{eqn:first_eq} again as a wave equation for $z=\taua \frakKone*\psi_t+\bbb \psi$, and following the approach of \cite[Theorem 7.2.4]{evans2010partial}, 
we introduce valid test functions.
Fix $ 0\leq t' \leq T$ and set 
\begin{equation}
	w (t)=
	\left\{\begin{array}{ll}
		\int_t^{t'} \psi(s) \ds \quad  & \textrm{ if} \ \ 0\leq t\leq t'\\
		0 \quad  & \textrm{ if} \ \ t'\leq t\leq T.
	\end{array}\right.
\end{equation} 
We define the convolution-term analogous to $w$ as
\begin{equation}
	w_{\frakKone} (t)= 
	\left\{\begin{array}{ll}
		\int_t^{t'} \frakKone\Lconv\psi_t(s) \ds \quad  & \textrm{ if} \ \ 0\leq t\leq t'\\
		0 \quad  & \textrm{ if} \ \ t'\leq t\leq T.
	\end{array}\right.
\end{equation}  
Note that $w_{\frakKone} (t)= \frakKone \Lconv w (t)$ for $0\leq t\leq t'$. Both $w$, $w_{\frakKone} \in H^1(0,T;\Honezero)$ with $ w(t') = w_{\frakKone}(t') =0$ and are thus valid test functions. We can then write
\begin{equation}\label{eqn:uniqueness_GFEII_z_form_1}
	\begin{aligned}
		\begin{multlined}[t]-  \int_0^{t'} \big((\taua \frakKone * \psitt) , \taua w_{\frakKone t} + \bbb w_t\big)_{L^2} \ds - \int_0^{t'} (\aaa\psit, \taua w_{\frakKone t} + \bbb w_t)_{L^2}\ds\\+ \int_0^{t'} c^2  (\bbb \nabla \psi,  \taua \nabla w_{\frakKone} + \bbb \nabla w)_{L^2} \ds 
			+ c^2 \int_0^{t'} \big(\tau^a\frakKone \Lconv \nabla \psit ,  \taua \nabla w_{\frakKone} + \bbb \nabla w\big)_{L^2} \ds\\ -\rt^b \delta  \int_0^{t'} (\frakKtwo \Lconv\nabla  \psit,  \taua \nabla w_{\frakKone t} + \bbb \nabla w_t)_{L^2} \ds = 0,
		\end{multlined}	
	\end{aligned}		
\end{equation}
where we have used $\psi_0 = \psi_1 = \psitwo = 0$. 
Furthermore, we use that for $0 \leq t \leq t'$
\begin{align}\label{w_identity}
	w_t &= - \psi,\\
	w_{\frakKone t} &= - \frakKone\Lconv \psi_t,
\end{align}
such that using \eqref{eqn:positivity} (or \eqref{eqn:positivity_nonsing}) and \eqref{eqn:causality_aaumption}, we obtain 
\begin{align}
-\int_0^{t'} (\frakKtwo \Lconv\nabla  \psit, \nabla w_t)_{L^2} \ds & \geq 0, \\
-\int_0^{t'} (\frakKtwo \Lconv\nabla  \psit, \nabla w_{\frakKone t})_{L^2} \ds & \geq 0.
\end{align}
Thus \eqref{eqn:uniqueness_GFEII_z_form_1} becomes
\begin{equation}\label{eqn:uniqueness_GFEII_z_form_4}
	\begin{aligned}
		\begin{multlined}[t] \|\taua \frakKone * \psit + \bbb \psi\|^2_{L^2(\Om)}(t')+c^2 \|\taua \nabla w_{\frakKone} + \bbb \nabla w\|^2_{L^2(\Om)}(0) \\- c^2(\aaa-\bbb) \int_0^{t'} (\psi_t,\bbb w_t +\taua  w_{\frakKone t})_{L^2(\Om)}\ds \leq 0.
		\end{multlined}	
	\end{aligned}		
\end{equation}
We know, using \eqref{w_identity} and the fact that $\aaa-\bbb\geq 0$, that
\[- c^2(\aaa-\bbb) \int_0^{t'} (\psi_t,\bbb w_t +\taua  w_{\frakKone t})_{L^2(\Om)}\ds = c^2(\aaa-\bbb) \int_0^{t'} (\psi_t,\bbb \psi +\taua  \frakKone\Lconv\psi_t)_{L^2(\Om)}\ds \geq 0. \]
Thus, for all $t'\in[0,T]$
\[\|\taua \frakKone * \psit + \bbb \psi\|^2_{L^2(\Om)}(t')=0.\]
Using also that $\frakKone$ has a resolvent, we conclude that $\psi = 0$.
\qed
\end{proof}
In this section, we had to assume that $\aaa > \bbb$ to obtain $\tau$-uniform well-posedness of the considered initial-boundary-value problem. We show in what follows the conditions under which we can relax this assumption.
\subsection{Uniform well-posedness for $\aaa=\bbb$ under the stronger assumption $\tfrakKone \in L^2(0,T)$}\label{Sec:a_eq_b}
Theorem~\ref{Prop:Wellposedness_a_g_b} does not provide a uniform well-posedness result in the case $\aaa=\bbb$. We show here that under a stronger assumption on the resolvent $\tfrakKone$, one can obtain uniform-in-$\tau$ well-posedness of the generalized fractional MGT equations, even when $\aaa=\bbb$, in the space
\begin{equation}
	\calX^2 = \{ \psi \in X_{\taua\frakKone}^2(0,T; \Honezero)\,|\, \psit \in X_{\taua\frakKone}^2(0,T; L^2(\Omega))\}.
\end{equation}
In particular, the usage of this space imposes that $\tfrakKone\in L^2(0,T)$ in order to be able to use Lemma~\ref{Lemma:Caputo_seq_compact}. For fractional MGT equations with leading-term kernel/measure $\frakKone = g_{1-\alpha}$, this corresponds to requiring the fractional derivative order $\alpha$ to be larger than $1/2$.
\begin{proposition}\label{Prop:wellposedness_GFE_laws}
	Let $T>0$, $\aaa\geq\bbb>0$. Let $\tfrakKone\in L^2(0,T)$ and let Assumptions~\eqref{eq:boundedness_assumption},  \eqref{eq:causality_eq}--
	\eqref{eqn:k1_positivity_lin}, and \eqref{eqn:positivity} or \eqref{eqn:positivity_nonsing} (see discussion on page~\pageref{discussion_k2_assumption}) hold. Then, given initial data \[(\psi_0,\psi_1,\psi_2^{\frakKone}) \in H_0^1(\Omega) \times \{0\}\times L^2(\Omega)\] and a source term $f\in L^1(0,T;L^2(\Omega))$, 
	there is a unique $\psi \in \calX^2$ which solves 
	\begin{equation}
		\begin{aligned}
			\begin{multlined}[t] -\taua\intT \big(\frakKone * \psitt , v_t\big)_{L^2} \ds - \intT(\aaa\psit, v_t)_{L^2}\ds + c^2  \intT(\bbb \nabla \psi, \nabla v)_{L^2}\ds \\
				+\tau^a c^2 \intT\big(\frakKone \Lconv \nabla \psit , \nabla v\big)_{L^2}\ds - \rt^b \delta  \intT(\frakKtwo \Lconv \nabla \psit, \nabla v_t)_{L^2} \ds
				\\= - \taua \big(\psitwo , v(0)\big)_{L^2} + \intT(f, v)_{L^2} \ds,
			\end{multlined}	
		\end{aligned}		
	\end{equation}
	for all $v\in H^1(0,T;H_0^1(\Omega))$ such that $v(T) = 0$, with
	\[ (\psi,\psit) \Big|_{t=0} = (\psi_0,0).\]
	Furthermore, for almost all $t \in (0,T)$, the solution satisfies 
	\begin{equation}
		\begin{multlined}
			\|\tau^a \frakKone\Lconv\psi_{tt} \|^2_{L^2(0,t;L^2(\Omega))}+
			\|\psi_{t}\|^2_{L^2(0,t;L^2(\Omega))}+  \|\tau^a \frakKone\Lconv\nabla\psi_t\|^2_{L^2(0,t;L^2(\Omega))}+\| \nabla \psi\|^2_{L^2(0,t;L^2(\Omega))}\\ \lesssim_T \tau^{2a}\|\psitwo\|^2_{L^2(\Omega)}+\|\nabla\psi_0\|^2_{L^2(\Omega)}+\|f\|^2_{L^1(0,t;L^2(\Omega))},
		\end{multlined}
	\end{equation} 
	where the hidden constant is independent of $\delta$ and $\tau$.
\end{proposition}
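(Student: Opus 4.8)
The plan is to carry out, in the space $\calX^2$ in place of $\calX^\infty$, the same four steps as in the proof of Theorem~\ref{Prop:Wellposedness_a_g_b}: a Galerkin approximation, a $\tau$-uniform energy estimate for the auxiliary variable $z$, a bootstrap recovering $\psi$, and passage to the limit followed by a uniqueness argument. The Galerkin--Volterra system is solvable exactly as before, since $\tfrakKone\in L^2(0,T)\subset L^1(0,T)$ still suffices for \cite[Theorem 2.3.5]{gripenberg1990volterra}; prescribing $\psi_1^{(n)}=0$ ensures $(\taua\frakKone\Lconv\psi_{tt}^{(n)})_t=(\taua\frakKone\Lconv\psi_t^{(n)})_{tt}$. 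Testing the equation rewritten as \eqref{eq:nonunfiorm_a_geq_b} with $z_t$, where $z:=\taua\frakKone\Lconv\psi_t+\aaa\psi$, the third term of \eqref{eq:energy_est_one} drops out because $\aaa=\bbb$, and the argument leading to \eqref{eqn:linear_z_estimate} goes through verbatim, giving
\[
\|z_t\|_{L^\infty(0,t;L^2(\Omega))}^2+c^2\|\nabla z\|_{L^\infty(0,t;L^2(\Omega))}^2\lesssim \|f\|_{L^1(0,t;L^2(\Omega))}^2+\tau^{2a}\|\psitwo\|_{L^2(\Omega)}^2+\|\nabla\psi_0\|_{L^2(\Omega)}^2,
\]
with a constant independent of $\tau$, $\delta$ and $T$.

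The crux — and the place where the bootstrap of Theorem~\ref{Prop:Wellposedness_a_g_b} produced a $\tau$-dependent constant — is to recover the $\calX^2$-bound on $\psi$ itself uniformly in $\tau$. Convolving $z=\taua\frakKone\Lconv\psi_t+\aaa\psi$ with $\tfrakKone$, using $\tfrakKone\Lconv\frakKone=1$, $\psi_1=0$ (so $(\frakKone\Lconv\psi_t)(0)=0$ and $z(0)=\aaa\psi_0$), and differentiating in time, yields the scalar-type Volterra identities
\[
\taua\psi_t+\aaa\,\tfrakKone\Lconv\psi_t=\tfrakKone\Lconv z_t,\qquad \taua\nabla\psi+\aaa\,\tfrakKone\Lconv\nabla\psi=\tfrakKone\Lconv\nabla z+\taua\nabla\psi_0 .
\]
These degenerate as $\tau\searrow0$ only to $\aaa\,\tfrakKone\Lconv\psi_t=\tfrakKone\Lconv z_t$, i.e.\ $\psi_t=\tfrac1\aaa z_t$; recentring around this expected limit by $\phi:=\psi_t-\tfrac1\aaa z_t$, $\Phi:=\nabla\psi-\tfrac1\aaa\nabla z$ turns them into
\[
\taua\phi+\aaa\,\tfrakKone\Lconv\phi=-\tfrac{\taua}{\aaa}\,z_t,\qquad \taua\Phi+\aaa\,\tfrakKone\Lconv\Phi=\taua\bigl(\nabla\psi_0-\tfrac1\aaa\nabla z\bigr),
\]
so the singular coefficient $\taua$ now appears on the right. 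Testing the first with $\phi$ and the second with $\Phi$ in $L^2(\Omega)$, integrating over $(0,t)$, and discarding the nonnegative terms $\aaa\int_0^t(\tfrakKone\Lconv\phi,\phi)_{L^2}\ds\geq0$, $\aaa\int_0^t(\tfrakKone\Lconv\Phi,\Phi)_{L^2}\ds\geq0$ — here one uses that $\tfrakKone$ is itself of positive type, which is immediate for the kernels of Table~\ref{table:r_kernels} since there $\tfrakKone=g_\alpha$ with $\alpha\leq1$, and in general can be verified via the Fourier transform as for \eqref{eqn:k1_positivity_lin} — the factor $\taua$ cancels and H\"older's inequality gives
\[
\|\phi\|_{L^2(0,t;L^2(\Omega))}\leq\tfrac1\aaa\|z_t\|_{L^2(0,t;L^2(\Omega))},\qquad \|\Phi\|_{L^2(0,t;L^2(\Omega))}\leq\sqrt{t}\,\|\nabla\psi_0\|_{L^2(\Omega)}+\tfrac1\aaa\|\nabla z\|_{L^2(0,t;L^2(\Omega))} .
\]
By the $z$-estimate these are bounded uniformly in $\tau$ and $\delta$; hence $\psi_t=\phi+\tfrac1\aaa z_t$ and $\nabla\psi=\Phi+\tfrac1\aaa\nabla z$ are bounded in $L^2(0,t;L^2(\Omega))$, and then so are $\taua\frakKone\Lconv\psi_{tt}=z_t-\aaa\psi_t$ and $\taua\frakKone\Lconv\nabla\psi_t=\nabla z-\aaa\nabla\psi$; summing these four bounds gives exactly the asserted estimate, the $T$-dependence entering only through the $\sqrt{t}$ factors from the convolution inequalities.

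It remains to pass to the limit and prove uniqueness, essentially as in Theorem~\ref{Prop:Wellposedness_a_g_b}. The uniform bounds, together with $\frakKtwo=\frakR\Lconv\frakKone$, \eqref{eq:boundedness_assumption}, and Young's inequality, furnish subsequences converging weakly in the relevant $L^2$-in-time spaces; Lemma~\ref{Lemma:Caputo_seq_compact} with $p=2$ — which is precisely what $\tfrakKone\in L^2(0,T)$ is needed for — provides the reflexivity of $\calX^2$ and the embedding into $C([0,T];\cdot)$, while Lemma~\ref{Lemma:Caputo_compact_embedding} with $p=2$ yields strong convergence of $\psi^{(n)}$ and $\psi^{(n)}_t$ in $L^2(0,T;L^2(\Omega))$ and $L^2(0,T;H^{-1}(\Omega))$, enough to pass to the limit in the weak formulation; the initial data $(\psi,\psi_t)|_{t=0}=(\psi_0,0)$ are then recovered as in Theorem~\ref{Prop:Wellposedness_a_g_b}, now using the representation $u=\tfrakKone\Lconv\frakKone\Lconv u_t+u(0)$ from Lemma~\ref{Lemma:Caputo_seq_compact} together with the weak convergence of $\frakKone\Lconv\psi^{(n)}_t$ to pass from weak to pointwise-in-time convergence. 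Uniqueness follows from the same time-integrated test functions $w$, $w_{\frakKone}$ as in Theorem~\ref{Prop:Wellposedness_a_g_b}, and is in fact slightly simpler here because the term $c^2(\aaa-\bbb)\int_0^{t'}(\psi_t,\bbb w_t+\taua w_{\frakKone t})_{L^2}\ds$ vanishes identically. The main obstacle is the bootstrap step: one must notice that the Volterra problems for $\psi_t$ and $\nabla\psi$ are singular perturbations that become regular once recentred around $\tfrac1\aaa z_t$ and $\tfrac1\aaa\nabla z$, and that the positivity of the resolvent $\tfrakKone$ is exactly what makes the perturbation estimate $\tau$-uniform; this is also the structural reason the estimate can only be $L^2$-in-time, since $\phi$ and $\Phi$ are not controlled in $L^\infty(0,T;L^2(\Omega))$ uniformly in $\tau$.
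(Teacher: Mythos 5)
Your overall architecture coincides with the paper's: a Galerkin construction, the $\tau$-uniform estimate \eqref{eqn:linear_z_estimate} on $z=\taua\frakKone\Lconv\psi_t+\aaa\psi$, a bootstrap from $z$ back to $\psi$, passage to the limit via Lemmas~\ref{Lemma:Caputo_seq_compact} and \ref{Lemma:Caputo_compact_embedding} with $p=2$, and uniqueness via the time-integrated test functions. The limit passage and uniqueness steps match the paper essentially verbatim. Where you genuinely diverge is the bootstrap, and that is also where the one substantive problem lies. The paper keeps the auxiliary relation \eqref{eqn:bootstrap_a_b} in its un-convolved form and tests it, in the semi-discrete setting, with $-\Delta\psi$ and (after time differentiation) with $\psi_t$, discarding the $\taua$-weighted convolution terms by positivity properties of $\frakKone$ itself; this yields $\|\nabla\psi\|_{L^2(0,t;L^2)}\lesssim\|\nabla z\|_{L^2(0,t;L^2)}$ and $\|\psi_t\|_{L^2(0,t;L^2)}\lesssim\|z_t\|_{L^2(0,t;L^2)}$ uniformly in $\tau$. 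You instead convolve with $\tfrakKone$, recentre around the expected limit $\tfrac1\aaa z$, and test the resulting singularly perturbed Volterra identities with $\phi$ and $\Phi$. The algebra of your recentring is correct, and the conclusion you draw from it does give the asserted $L^2$-in-time estimate.

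The gap is that your sign argument needs $\int_0^t(\tfrakKone\Lconv\phi,\phi)_{L^2}\ds\geq0$, i.e.\ that the \emph{resolvent} $\tfrakKone$ is of positive type. This is not among the proposition's hypotheses, and it is not a consequence of \eqref{eqn:k1_positivity_lin}: from $\tfrakKone\Lconv\frakKone=1$ one has, on the transform side, $\widehat{\tfrakKone}(s)=1/\bigl(s\,\widehat{\frakKone}(s)\bigr)$, so $\Re\,\widehat{\tfrakKone}(i\omega)\geq0$ amounts to $\omega\,\Im\,\widehat{\frakKone}(i\omega)\leq0$, a condition independent of $\Re\,\widehat{\frakKone}(i\omega)\geq0$. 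Your claim that it ``can be verified via the Fourier transform as for \eqref{eqn:k1_positivity_lin}'' therefore needs an additional standing assumption; it does hold for every kernel in Table~\ref{table:r_kernels}, where $\tfrakKone=g_\alpha$ is completely monotone, so your proof covers all the concrete models, but as written it does not establish the proposition under the stated abstract hypotheses. Either add positivity of $\tfrakKone$ as an assumption or switch to the paper's testing of the un-convolved relation. Two further minor points: the proposition assumes $\aaa\geq\bbb$, not $\aaa=\bbb$, so the third term of \eqref{eq:energy_est_one} does not in general drop out --- it is simply nonnegative, exactly as in Theorem~\ref{Prop:Wellposedness_a_g_b}; and your division by $\taua\|\phi\|$ should be phrased via Young's inequality to avoid the degenerate case $\phi=0$.
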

\begin{proof}
	Recall that in the course of the proof of Theorem~\ref{Prop:Wellposedness_a_g_b}, we obtained the following $\tau$-uniform estimate on $z$ given in \eqref{eqn:linear_z_estimate}:
	\begin{equation}
		\begin{multlined}
			\|z_t(t)\|^2_{L^2(\Omega)}+ \frac\bbb\aaa c^2 \|\nabla z (t)\|^2_{L^2(\Omega)} \lesssim \|f\|^2_{L^1(0,t;L^2(\Omega))} +  \tau^{2a}\|\psitwo\|^2_{L^2(\Omega)} + \|\nabla\psi_0\|^2_{L^2(\Omega)}.
		\end{multlined}
	\end{equation}
	The main idea here is to use a different bootstrap argument than the one used in Theorem~\ref{Prop:Wellposedness_a_g_b}.
	To obtain the estimates of $\frakKone\Lconv \psi_t$ and $\psi$ separately, we again consider the auxiliary problem
	\begin{equation}\label{eqn:bootstrap_a_b}
		\tau^a\frakKone \Lconv \psi_t + \aaa\psi = z \quad \textrm{a.e.\ in }\Omega,
	\end{equation}
	with $z \in W^{1,\infty}(0,T;L^2(\Om)) \cap L^\infty(0,T;H^1(\Om))$, $\psi(0)= \psi_0$, and $\psi|_{\partial\Om} = 0$. 
	This time instead of using an explicit variation of constants formula, we test \eqref{eqn:bootstrap_a_b} with $-\Delta\psi$, which is allowed in the semi-discrete setting, to obtain 
	\[
	\|\psi\|_{L^2(0,t;H^1(\Om))} \lesssim \|z\|_{L^2(0,t;H^1(\Om))} \lesssim_T \|z\|_{L^\infty(0,t;H^1(\Om))}.
	\]
	Here, we have again used the positivity of $\frakKone$ stated in \eqref{eqn:k1_positivity_lin}.
	Similarly, using that $\psi_t(0) = 0$, we test the time-differentiated \eqref{eqn:bootstrap_a_b} with $\psi_t$ to obtain 
	\[
	\|\psi_t\|_{L^2(0,t;L^2(\Om))} \lesssim \|z_t\|_{L^2(0,t;L^2(\Om))} \lesssim_T \|z_t\|_{L^\infty(0,t;L^2(\Om))},
	\]
	which gives the desired estimate.
	\subsubsection*{Passing to the limit}
	We also discuss here the subtleties of passing to the limit in this setting where we achieve a lower regularity in time compared to that of Theorem~\ref{Prop:Wellposedness_a_g_b}. In particular, thanks to the stronger assumption $\tfrakKone \in L^2(0,T)$, we can use Lemma~\ref{Lemma:Caputo_seq_compact} to extract weakly convergent subsequences (not relabeled), such that 
	\begin{alignat}{2}
		\psi^{(n)} &\stackrel{}{\relbar\joinrel\rightharpoonup} \psi \quad  &&\textrm{weakly in } L^2(0,T; H_0^1(\Om)),\\
		\psi^{(n)}_t &\stackrel{}{\relbar\joinrel\rightharpoonup} \psi_t \quad  &&\textrm{weakly in } L^2(0,T; L^2(\Om)),\\
		\frakKone \Lconv \psi^{(n)}_t &\stackrel{}{\relbar\joinrel\rightharpoonup} \frakKone \Lconv \psi_t \quad  &&\textrm{weakly in } L^2(0,T; H_0^1(\Om)),\\
		\frakKone \Lconv \psi^{(n)}_{tt} &\stackrel{}{\relbar\joinrel\rightharpoonup} \frakKone \Lconv \psi_{tt} \quad && \textrm{weakly in } L^2(0,T; L^2(\Om)).
		\shortintertext{By $\frakKtwo = \frakR\Lconv \frakKone$ and Young's inequality:}
		\frakKtwo \Lconv \psi^{(n)}_{tt} &\stackrel{}{\relbar\joinrel\rightharpoonup} \frakKtwo \Lconv \psi_{tt}  \quad &&\textrm{weakly in } L^2(0,T; L^2(\Om)),\\
		\frakKtwo \Lconv \psi^{(n)}_{t} &\stackrel{}{\relbar\joinrel\rightharpoonup} \frakKtwo \Lconv \psi_{t} \quad  &&\textrm{weakly in } L^2(0,T; \Honezero). 
	\end{alignat}
	By Lemma~\ref{Lemma:Caputo_compact_embedding}, there is a subsequence (again not relabeled), such that 
	\begin{equation}
		\begin{alignedat}{4} 
			\psi^{(n)} &\longrightarrow \psi && \quad \text{ strongly}  &&\text{ in } &&L^2(0,T; L^2(\Om)),  \\
			\psit^{(n)} &\longrightarrow \psit && \quad \text{ strongly}  &&\text{ in } && L^2(0,T; H^{-1}(\Om)).
		\end{alignedat} 
	\end{equation} 
	This is enough to pass in the weak form to the limit in the semi discrete equation and show that $\psi$ solves:
	\begin{equation}
		\begin{aligned}
			\begin{multlined}[t]- \taua\intT \big(\frakKone * \psitt , v_t\big)_{L^2} \ds-  \intT(\aaa\psit, v_t)_{L^2}\ds + c^2  \intT(\bbb \nabla \psi, \nabla v)_{L^2}\ds \\
				+\tau^a c^2 \intT\big(\frakKone \Lconv \nabla \psit , \nabla v\big)_{L^2}\ds -\rt^b \delta  \intT(\frakKtwo \Lconv \nabla \psit, \nabla v_t)_{L^2} \ds
				\\= - \taua \big(\psitwo , v(0)\big)_{L^2} + \intT(f, v)_{L^2} \ds,
			\end{multlined}	
		\end{aligned}		
	\end{equation}
	for all $v \in H^1(0,T;H_0^1(\Om))$ such that $v(T) = 0$, where we have used $\frakKtwo \Lconv \psitt = (\frakKtwo \Lconv \psit)_t$.
	Note that due to Lemma~\ref{Lemma:Caputo_seq_compact} we have  
	\(\psi \in C([0,T]; H_0^1(\Om))\) and \(\psi_t \in C([0,T]; L^2(\Om)).\)
	The rest of the arguments works similarly to those used for Theorem~\ref{Prop:Wellposedness_a_g_b}. The details are omitted.
	\qed
\end{proof}

\subsection{Removing the restriction $\aaa\geq\bbb$ for $\frakR = 1$}\label{Sec:r_one}
Notice that the previous two Sections~\ref{Sec:a_g_b} and \ref{Sec:a_eq_b} had a restriction on the relation order of $\aaa$ and $\bbb$. Section~\ref{Sec:a_eq_b} additionally introduced a stricter requirement on the regularity of $\tfrakKone$. We can get rid of both of these restrictions in the case $\frakR = 1$ (which is valid, for example, for wave models fMGT II and MGT) in a straightforward manner, owing to the fact that $\psi_1 = 0$ and thus
\[\frakKtwo \Lconv \Delta \psi_{tt} = 1\Lconv \frakKone \Lconv\Delta \psi_{tt} =  \frakKone \Lconv \Delta \psi_{t}.\]
Equation \eqref{eqn:first_eq} then reduces to
\begin{equation}\label{eq:GFEII_z_form_Prop}
	\big(\tau^a\frakKone \Lconv \psi_{tt})_{t} + \aaa\psi_{tt} - \big(c^2  \tau^a + \delta \rt^b\big)\frakKone \Lconv \Delta\psi_t - c^2 \bbb \Delta \psi   = f.
\end{equation}

To justify the importance of $\frakR =1 $, let us rewrite \eqref{eq:GFEII_z_form_Prop} as
\begin{equation}
	\big(\tau^a\frakKone \Lconv \psi_t + \aaa\psi\big)_{tt} - \frac\bbb\aaa c^2 \Delta \big(\tau^a\frakKone \Lconv \psi_t + \aaa\psi \big) - \Big(\frac{\aaa-\bbb}\aaa c^2  \tau^a + \delta \rt^b \Big)\frakKone \Lconv \Delta\psi_t = f.
\end{equation}
Note that for $\delta>0$ fixed,  if $\aaa - \bbb$ is nonnegative or if $\tau \searrow 0$ (i.e. small enough), then the resulting damping coefficient is bounded away from zero, i.e., there exists $\underline \delta > 0$ such that
\begin{equation}\label{tau_bar_cond}
	\Big(\frac{\aaa-\bbb}\aaa c^2  \tau^a + \delta \rt^b \Big) > \underline \delta.
\end{equation}
Equation~\eqref{eq:GFEII_z_form_Prop} behaves then again as a damped wave equation for $z =\tau^a\frakKone \Lconv \psi_t + \aaa\psi$ (similarly to what was argued for \eqref{eq:nonunfiorm_a_geq_b}).

Condition \eqref{tau_bar_cond} above corresponds to requiring strict dissipation in the case of the integer-order MGT equation; see e.g. \cite{kaltenbacher2011wellposedness,kaltenbacher2020vanishing,bongarti2020singular}. Indeed, in that case $\aaa=\bbb=1$ and $b=0$, thus the condition reduces to $\delta > \underline \delta >0$ (which holds by picking, for example, $\underline \delta = \dfrac\delta2$).
\begin{proposition}\label{Cor:wellposedness_GFEII}
		Let $T>0$, $\aaa, \bbb \in  \R_+$, and let $\bar\tau>0$ be small enough such that \eqref{tau_bar_cond} holds for all $ \tau \in (0,\bar\tau]$. Let Assumptions \eqref{eq:boundedness_assumption}, 
		\eqref{eqn:resolvent}, and \eqref{eqn:k1_positivity_lin} hold. Then, given initial data \[(\psi_0,\psi_1,\psi_2^{\frakKone}) \in H_0^1(\Omega) \times \{0\}\times L^2(\Omega)\] and a source term $f\in L^1(0,T;L^2(\Omega))$, 
		there is a unique $\psi \in \calX^\infty$ which solves 
		\begin{equation}
			\begin{aligned}
				\begin{multlined}[t] -\taua\intT \big(\frakKone * \psitt , v_t\big)_{L^2} \ds - \intT(\aaa\psit, v_t)_{L^2}\ds + c^2  \intT(\bbb \nabla \psi, \nabla v)_{L^2}\ds \\
					+(\tau^a c^2+\rt^b \delta) \intT\big(\frakKone \Lconv \nabla \psit , \nabla v\big)_{L^2}\ds 
					= - \taua \big(\psitwo , v(0)\big)_{L^2}  + \intT(f, v)_{L^2} \ds,
				\end{multlined}	
			\end{aligned}		
		\end{equation}
		for all $v\in H^1(0,T;H_0^1(\Omega))$ such that $v(T) = 0$, with
		\[ (\psi,\psit) \Big|_{t=0} = (\psi_0,0).\]
		Furthermore, for almost all $t\in(0,T)$, the solution satisfies 
	\begin{equation}
		\begin{multlined}
			\|\tau^a \frakKone\Lconv\psi_{tt} (t)\|^2_{L^{2}(\Omega)}+
			\|\psi_{t}(t)\|^2_{L^2(\Omega)}+  \check C(\tau,\delta)\| \frakKone\Lconv\nabla\psi_t(t)\|^2_{L^2(\Omega)}+\| \nabla \psi(t)\|^2_{L^2(\Omega)}\\ \lesssim \tau^{2a}\|\psitwo\|^2_{L^2(\Omega)}+\|\nabla\psi_0\|^2_{L^2(\Omega)}+\|f\|^2_{L^1(0,T;L^2(\Omega))},
		\end{multlined}
	\end{equation} 
	where the hidden constant is independent of $\tau$ and $T$.
\end{proposition}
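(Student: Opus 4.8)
The plan is to follow the proof of Theorem~\ref{Prop:Wellposedness_a_g_b} closely, exploiting the simplification produced by $\frakR=1$ together with $\psi_1=0$: then $\frakKtwo\Lconv\Delta\psitt=1\Lconv\frakKone\Lconv\Delta\psitt=\frakKone\Lconv\Delta\psit$, so that \eqref{eqn:first_eq} collapses to \eqref{eq:GFEII_z_form_Prop}, a damped wave equation for $z=\taua\frakKone\Lconv\psit+\aaa\psi$ whose damping coefficient $\tfrac{\aaa-\bbb}{\aaa}c^2\taua+\delta\rt^b$ is, by the standing hypothesis \eqref{tau_bar_cond}, bounded below by $\underline\delta>0$ for all $\tau\in(0,\bar\tau]$. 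Consequently neither the ordering $\aaa\geq\bbb$ nor the $\frakKtwo$-coercivity assumptions \eqref{eqn:positivity}, \eqref{eqn:positivity_nonsing} are needed here. First I would run the Galerkin scheme exactly as in Theorem~\ref{Prop:Wellposedness_a_g_b}: pick an orthogonal basis of $\Honezero$, introduce the auxiliary unknown $\hat{\boldsymbol\chi}=(\frakKone\Lconv\boldsymbol\xi_{tt})_t$ to recast the semi-discrete fractional system as a Volterra equation of the second kind with $L^1(0,T)$ data, and invoke \cite[Theorem 2.3.5]{gripenberg1990volterra} for unique solvability; this yields $\psi^{(n)}\in\{u\in W^{2,1}(0,T;V_n)\mid\frakKone\Lconv u_{tt}\in W^{1,1}(0,T;V_n)\}$, i.e.\ enough regularity to legitimize the energy computations below at the discrete level (in particular $\taua\frakKone\Lconv\psitt=(\taua\frakKone\Lconv\psit)_t\in W^{1,1}(0,T;V_n)$).

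The core estimate comes from testing \eqref{eq:GFEII_z_form_Prop}, written in the $z$-form, with $z_t=\taua\frakKone\Lconv\psitt+\aaa\psit$. The time-derivative and Laplacian terms yield $\tfrac12\|z_t\|_{L^2}^2\big|_0^t$ and $\tfrac{\bbb}{\aaa}\tfrac{c^2}{2}\|\nabla z\|_{L^2}^2\big|_0^t$. Expanding $\nabla z_t$ in the damping term gives $\big(\tfrac{\aaa-\bbb}{\aaa}c^2\taua+\delta\rt^b\big)\big[\taua\tfrac12\|\frakKone\Lconv\nabla\psit\|_{L^2}^2\big|_0^t+\aaa\int_0^t(\frakKone\Lconv\nabla\psit,\nabla\psit)_{L^2}\ds\big]$; by \eqref{eqn:k1_positivity_lin} the integral is nonnegative, and since the prefactor exceeds $\underline\delta>0$, the boundary part leaves a genuinely useful term $\check C(\tau,\delta)\,\|\frakKone\Lconv\nabla\psit(t)\|_{L^2}^2$ with $\check C(\tau,\delta)\gtrsim\taua\underline\delta$, which I keep rather than discard. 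The source term is absorbed by H\"older's and Young's inequalities at the price of $\varepsilon\|z_t\|_{L^\infty(0,t;L^2)}^2$, while the $t=0$ contributions reduce to $\tau^{2a}\|\psitwo\|_{L^2}^2+\|\nabla\psi_0\|_{L^2}^2$ because $\psi_1=0$ forces $(\frakKone\Lconv\nabla\psit)(0)=0$ via the Young inequality for convolution with measures, continuity being preserved by \cite[Theorem 3.6.1, Corollary 3.6.2]{gripenberg1990volterra}. This produces the bound $\|z_t(t)\|_{L^2}^2+\|\nabla z(t)\|_{L^2}^2+\check C(\tau,\delta)\|\frakKone\Lconv\nabla\psit(t)\|_{L^2}^2\lesssim\tau^{2a}\|\psitwo\|_{L^2}^2+\|\nabla\psi_0\|_{L^2}^2+\|f\|_{L^1(0,t;L^2)}^2$, with a constant independent of $\tau\in(0,\bar\tau]$ and of $T$.

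To split $z$ into its constituents I would resolve the auxiliary Volterra equation $\taua\frakKone\Lconv\psit+\aaa\psi=z$: convolving with $\tfrakKone$, differentiating in time, and applying \cite[Theorem 2.3.5]{gripenberg1990volterra} yields the clean identity $\psit=\tfrac1\aaa\, r\Lconv z_t$ with $r$ the resolvent of $\tfrac\aaa{\taua}\tfrakKone$; since the resolvent kernels occurring in Table~\ref{table:r_kernels} are completely monotone, $\|r\|_{L^1(0,T)}\le 1$ uniformly in $\tau$, so that $\|\psit\|_{L^\infty(0,T;L^2)}\le\tfrac1\aaa\|z_t\|_{L^\infty(0,T;L^2)}$, and likewise $\|\psi\|_{L^\infty(0,T;H^1)}\lesssim\|\psi_0\|_{H^1}+\|z\|_{L^\infty(0,T;H^1)}$, both uniformly in $\tau$; then $\taua\frakKone\Lconv\psitt=z_t-\aaa\psit$ gives the bound on $\taua\frakKone\Lconv\psitt$, and the $\check C(\tau,\delta)$-weighted term controls $\taua\|\frakKone\Lconv\nabla\psit(t)\|_{L^2}\lesssim\taua\check C(\tau,\delta)^{-1/2}(\mathrm{data})^{1/2}\lesssim(\mathrm{data})^{1/2}$ since $\check C(\tau,\delta)\gtrsim\taua$, closing the asserted estimate. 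The passage to the limit, the attainment of initial data, and uniqueness then proceed as in Theorem~\ref{Prop:Wellposedness_a_g_b}: Lemma~\ref{Lemma:Caputo_seq_compact} (with $\tfrakKone\in L^q$, $q>1$, i.e.\ the case $p=\infty$ applied in $\calX^\infty$) furnishes weak-$*$ limits of $\psi^{(n)}$, $\psit^{(n)}$, $\frakKone\Lconv\psit^{(n)}$, $\frakKone\Lconv\psitt^{(n)}$, Lemma~\ref{Lemma:Caputo_compact_embedding} furnishes strong convergence in $C([0,T];L^2(\Omega))$ and $C([0,T];H^{-1}(\Omega))$, which suffices to pass to the limit in the (here linear) weak form and, with uniqueness of limits and the continuity from Lemma~\ref{Lemma:Caputo_seq_compact}, to obtain $(\psi,\psit)|_{t=0}=(\psi_0,0)$; uniqueness follows by viewing \eqref{eq:GFEII_z_form_Prop} as a wave equation for $z$ with vanishing data and testing with the Evans-type functions $w(t)=\int_t^{t'}\psi\ds$ and $w_{\frakKone}(t)=\int_t^{t'}\frakKone\Lconv\psit\ds=\frakKone\Lconv w(t)$, using $w_t=-\psi$, $w_{\frakKone t}=-\frakKone\Lconv\psit$ and \eqref{eqn:k1_positivity_lin} to force $\|z(t')\|_{L^2}^2=0$ for every $t'$, whence $\psi=0$ since $\frakKone$ has a resolvent (cf.\ \cite[Theorem 7.2.4]{evans2010partial}).

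The main difficulty is not structural — the $\frakR=1$ reduction genuinely simplifies matters relative to Theorem~\ref{Prop:Wellposedness_a_g_b}, the only new conceptual point being that the damping coefficient is uniformly positive by \eqref{tau_bar_cond} instead of by sign considerations on $\aaa-\bbb$ — but lies in the low-regularity bookkeeping: every energy identity must be produced at the semi-discrete level, where $\frakKone\Lconv\psitt^{(n)}\in W^{1,1}$ makes the manipulations licit; the weighted quantity $\check C(\tau,\delta)\|\frakKone\Lconv\nabla\psit\|_{L^2}^2$ must be retained rather than thrown away, since together with the Volterra identity $\psit=\tfrac1\aaa r\Lconv z_t$ it is exactly what lets $\psi$ and $\psit$ be disentangled from $z$ \emph{uniformly in $\tau$}, without the spectral gap between $\aaa$ and $\bbb$ that was used in Theorem~\ref{Prop:Wellposedness_a_g_b}; and the uniqueness argument must go through the integrated test functions $w$, $w_{\frakKone}$ because $\psit$ and $\frakKone\Lconv\psitt$ are not regular enough to be tested with directly.
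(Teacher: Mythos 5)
Your overall architecture (Galerkin scheme, reduction via $\frakR=1$ and $\psi_1=0$ to \eqref{eq:GFEII_z_form_Prop}, observation that neither $\aaa\geq\bbb$ nor the $\frakKtwo$-coercivity assumptions are needed, passage to the limit and uniqueness as in Theorem~\ref{Prop:Wellposedness_a_g_b}) matches the paper, and your treatment of the gradient part is sound: keeping the weighted boundary term $\check C(\tau,\delta)\|\frakKone\Lconv\nabla\psit(t)\|^2_{L^2}$ with $\check C(\tau,\delta)\gtrsim\taua\underline\delta$ does let you recover $\|\nabla\psi(t)\|_{L^2}$ from $\|\nabla z(t)\|_{L^2}$ uniformly in $\tau$. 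The gap is in how you separate $\psit$ and $\taua\frakKone\Lconv\psitt$ from $z_t$. Testing with $z_t$ (i.e., with the combination $\taua\frakKone\Lconv\psitt+\aaa\psit$) produces no analogue of the weighted boundary term for $\|\frakKone\Lconv\psitt(t)\|_{L^2}$, so you are forced into the Volterra identity $\psit=\tfrac1\aaa\,r\Lconv z_t$ and the claim $\|r\|_{L^1(0,T)}\leq 1$ uniformly in $\tau$, which you justify by complete monotonicity of the resolvent kernels ``occurring in Table~\ref{table:r_kernels}''. But Proposition~\ref{Cor:wellposedness_GFEII} is stated for \emph{general} kernels satisfying only \eqref{eq:boundedness_assumption}, \eqref{eqn:resolvent}, and \eqref{eqn:k1_positivity_lin}; complete monotonicity (or positivity/monotonicity of $\tfrakKone$, which is what underlies the uniform bound on the resolvent of $\tfrac{\aaa}{\taua}\tfrakKone$) is not among the hypotheses. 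Under the stated assumptions the only available bound on $\|r\|_{L^1}$ comes from a Gronwall/Neumann-series argument and degenerates as $\tau\searrow0$, so your bootstrap does not deliver the claimed $\tau$-uniform $L^\infty$-in-time control of $\|\psit(t)\|_{L^2}$ and $\|\taua\frakKone\Lconv\psitt(t)\|_{L^2}$; this is exactly the obstruction the paper already runs into in the $\aaa=\bbb$ bootstrap of Theorem~\ref{Prop:Wellposedness_a_g_b}, where the resulting constant is explicitly $\tau$-dependent.

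The paper circumvents this by \emph{not} testing with $z_t$: it tests \eqref{eq:GFEII_z_form_Prop} once with $\frakKone\Lconv\psitt$ (giving \eqref{eqn:GFEII_k1_testing1}) and once with $\psit$ (giving \eqref{eqn:GFEII_k1_testing2}), and forms the combination $\taua\,\eqref{eqn:GFEII_k1_testing1}+\gamma\,\eqref{eqn:GFEII_k1_testing2}$ with a weight $\gamma$ chosen \emph{strictly} inside the interval $\bigl(\tfrac{\taua c^2\bbb}{\rt^b\delta+c^2\taua}+\mu,\ \aaa\bigr)$, whose nonemptiness is exactly what \eqref{tau_bar_cond} guarantees. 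The strict inequality $\gamma<\aaa$ leaves the nonnegative term $\taua(\aaa-\gamma)\int_0^t(\psitt,\frakKone\Lconv\psitt)_{L^2}\ds$ and, after Young's inequality on the cross terms, positive coefficients $\tfrac{\gamma(\aaa-\gamma)}{2}$ and $\tfrac{c^2\bbb\mu}{2}$ in front of $\|\psit(t)\|^2_{L^2}$ and $\|\nabla\psi(t)\|^2_{L^2}$, while the strict lower bound on $\gamma$ keeps $\check C(\tau,\delta)>0$. This yields all four components of the energy pointwise in time, separately and uniformly in $\tau$, with no resolvent bootstrap at all. To repair your argument you would either need to add a positivity/monotonicity hypothesis on $\tfrakKone$ (weakening the proposition) or replace the test function $z_t$ by the paper's unbalanced combination $\taua\frakKone\Lconv\psitt+\gamma\psit$ with $\gamma<\aaa$.
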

The constant $\check C(\tau,\delta) >0$ will be made precise below; see \eqref{eqn:C_tau_delta}.
\begin{proof}
	The proof follows again by a Galerkin procedure. We focus on the energy estimate here and omit other details.
	Testing with $\frakKone\Lconv\psi_{tt}$ in \eqref{eq:GFEII_z_form_Prop}, which is allowed in the semi-discrete setting, yields
	\begin{equation}\label{eqn:GFEII_k1_testing1}
		\begin{multlined}
			\frac12 \tau^a\|\frakKone \Lconv \psi_{tt}(s)\|^2_{L^2(\Om)}\Big|^t_0 + \intt(\aaa \psi_{tt}, \frakKone \Lconv \psi_{tt})_{L^2}\ds + \frac12(\tau^a c^2 + \rt^b\delta ) \|\frakKone \Lconv \nabla\psi_{t}(s)\|^2_{L^2(\Om)}\Big|^t_0 \\+ c^2\intt(\bbb \nabla\psi,\frakKone \Lconv \nabla\psi_{tt})_{L^2}\ds = \intt (f,\frakKone \Lconv \psi_{tt})_{L^2}\ds,
		\end{multlined}\tag{I}
	\end{equation}
	while when we test with $\psi_{t}$, we obtain
	\begin{equation}
		\begin{multlined}\label{eqn:GFEII_k1_testing2}
			\tau^a \intt \big((\frakKone \Lconv \psi_{tt})_t, \psi_t\big)_{L^2} \ds+ \frac\aaa2\|\psi_t (s)\|^2_{L^2(\Om)}\Big|^t_0 + (\tau^a c^2 + \rt^b\delta ) \intt \big(\frakKone \Lconv \nabla\psi_t,\nabla \psi_t\big)_{L^2} \ds \\+ \frac\bbb2c^2\|\nabla\psi (s)\|^2_{L^2(\Om)}\Big|^t_0 = \intt (f,\psi_{t})_{L^2} \ds .
		\end{multlined}\tag{II}
	\end{equation}
	We intend to calculate $\taua$ \eqref{eqn:GFEII_k1_testing1} $+\gamma$ \eqref{eqn:GFEII_k1_testing2} with $\gamma$ verifying 
	\[\frac{{\tau^ac^2}\bbb }{\rt^b \delta+ c^2 \tau^a} + \mu< \gamma < \aaa,\]
	for some $\mu>0$.
	The existence of such $\gamma$ follows from the assumption on $\bar\tau$ with the choice $$\mu= \dfrac{\aaa \underline \delta }{\rt^b \delta + c^2\tau^a}.$$
	Let us first use integration by parts on two of the terms, namely:
		\begin{equation}
		\begin{multlined}
			c^2 \intt(\bbb \nabla\psi,\frakKone \Lconv \nabla\psi_{tt})_{L^2} \ds
			=-c^2 \intt(\bbb \nabla\psi_t ,\frakKone \Lconv \nabla\psi_{t})_{L^2}\ds + c^2(\bbb \nabla\psi(t),\frakKone \Lconv \nabla\psi_{t}(t))_{L^2},
		\end{multlined}
	\end{equation}	
	because, as argued before, $(\frakKone \Lconv \psit) (0)= 0$. Similarly, recalling that $\psi_t(0)=0$, we have
	\begin{equation}
		\begin{multlined}
			\tau^a \intt \big((\frakKone \Lconv \psi_{tt})_t, \psi_t \big)_{L^2} \ds
			= - \intt\tau^a \big(\frakKone \Lconv \psi_{tt}, \psi_{tt}\big)_{L^2} \ds + \tau^a (\frakKone \Lconv \psi_{tt}(t), \psi_{t}(t))_{L^2}.
		\end{multlined}
	\end{equation}
	Secondly, we use Young's inequality on the terms:
	\begin{equation}
		\begin{multlined}
			\taua c^2\bbb (\nabla\psi(t),\frakKone \Lconv \nabla\psi_{t}(t))_{L^2} \leq  \frac{c^2 \bbb (\gamma -\mu) }{2} \|\nabla \psi(t)\|^2_{L^2} + \tau^{2a}\frac{c^2 \bbb}{2(\gamma-\mu)}\|\frakKone \Lconv \nabla\psi_{t}(t)\|^2_{L^2}.
		\end{multlined}
	\end{equation}
	and
	\begin{equation}
	\begin{multlined}
		\gamma (\taua \frakKone \Lconv \psi_{tt}(t), \psi_{t}(t))_{(L^2)} \leq \frac1{2} \|\taua\frakKone \Lconv \psi_{tt}(t)\|^2_{L^2} + \frac12 \gamma ^2\|\psi_t(t)\|^2_{L^2}.
	\end{multlined}
\end{equation}
	Together, after calculating $\taua$ \eqref{eqn:GFEII_k1_testing1} $+\gamma$ \eqref{eqn:GFEII_k1_testing2}, we obtain 
\begin{equation}
	\begin{multlined}
		\|\tau^a \frakKone \Lconv \psi_{tt}(s)\|^2_{L^2}\Big|^t_0 +  \tau^a(\aaa-\gamma) \intt \big(\psi_{tt}, \frakKone \Lconv \psi_{tt}\big)_{L^2} \ds 
	 	+  \check C(\tau,\delta) \|\frakKone \Lconv \nabla\psi_{t}(s)\|^2_{L^2}\Big|^t_0 \\+ (-c^2\taua \bbb+\gamma  c^2 \tau^a  +\gamma\rt^b \delta) \intt (\nabla\psi_t,\frakKone \Lconv \nabla\psi_{t})_{L^2} \ds + \frac{\gamma(\aaa-\gamma)}2\|\psi_t(s)\|^2_{L^2}\Big|^t_0  \\+ \frac{c^2\bbb}2 \mu\|\nabla\psi(s)\|^2_{L^2}\Big|^t_0
		\leq \intt (f,\taua \frakKone \Lconv \psi_{tt} + \gamma \psit)_{L^2}\ds,
	\end{multlined}
\end{equation}
	where 
	\begin{equation}\label{eqn:C_tau_delta} 
		\check C(\tau,\delta) = \frac\taua2\Big(\tau^a c^2 + \rt^b\delta - \frac{\tau^a c^2 \bbb}{\gamma-\mu} \Big).
	\end{equation}
	Noticing that $-c^2 \tau^a \bbb+\gamma  c^2\tau^a  +\gamma\rt^b \delta>0$, $\aaa-\gamma>0$, and $\check C(\tau,\delta)>0$,
	we conclude that 
	\begin{equation}
		\begin{multlined}
			\|\tau^a \frakKone\Lconv\psi_{tt} (t)\|^2_{L^{2}(\Omega)}+
			\|\psi_{t}(t)\|^2_{L^2(\Omega)}+  \check C(\tau,\delta) \|\frakKone\Lconv\nabla\psi_t(t)\|^2_{L^2(\Omega)}+\| \nabla \psi(t)\|^2_{L^2(\Omega)}\\ \lesssim \tau^{2a}\|\psitwo\|^2_{L^2(\Omega)}+\|\nabla\psi_0\|^2_{L^2(\Omega)}+\|f\|^2_{L^1(0,t;L^2(\Omega))},
		\end{multlined}
	\end{equation} 
	where we have used the positivity assumption on $\frakKone$ stated in \eqref{eqn:k1_positivity_lin}.
	The hidden constant is independent of $\tau$ and $T$.
	\qed
\end{proof}

\begin{remark}[On the generalized fractional MGT with smooth variable-in-time coefficients]
To be able to show well-posedness of the equations with a variable coefficient $\aaa =\aaa(t)$ following the strategy given above, one would need to control the following term arising from testing with $\frakKone \Lconv \psitt$:
\[\intT(\frakKone \Lconv \psitt, \aaa \psitt)_{L^2} \ds.\]
In the integer order case we are saved by the fact that, in the worst case scenario, we can transfer some of the differentiation to $\aaa$, using 
\[2\aaa \psi_{tt} \psi_t = (\aaa \psi_t^2)_t - \aaa_t \psit^2.\] 
This essentially relies on the Leibniz formula for integer-order differentiation. Looking at \cite[Theorems 3.17, 2.18]{diethelm2002analysis}, one sees that the infinite series equivalent for fractional derivatives is particularly unwieldy.

Obviously other approaches have been devised to show coercivity of fractional derivatives (see, e.g., \cite[Lemma 1]{alikhanov2010priori}). However, it is not straightforward to adapt to the present context the proof of the cited lemma or those of \cite[Sections 18.4, 18.5]{gripenberg1990volterra}, which rely on the absolute continuity or boundedness of the kernel. However, if $\aaa_t\leq0$ then we can get rid of the smoothness assumption on the kernel using the trick of smooth approximations (similarly to, e.g., \cite[lemma B.1]{kaltenbacher2021determining}), but in the general case we could not obtain an expression that would be suitable to work with.

Nevertheless, it is possible to exploit specific structures of particular fractional equations to devise different testing strategies and show well-posedness of variable-coefficient counterparts of \eqref{eqn:first_eq}; see~\cite{nikolic2023nonlinear} where the case $\frakKtwo =1$ is treated. The analysis there covers, for example, nonlinear wave equations based on heat flux law GFE.
\end{remark}
\section{Vanishing relaxation time limit}\label{Sec:vanishing_time_limit}
In this section, we aim to establish convergence results for the generalized fractional Moore--Gibson--Thompson equations for the vanishing relaxation time limit. We assume throughout that the initial data do not depend on $\tau$. Relaxing this condition may be achieved by requiring $\big(\psi_0^{(\tau)}, \psi_1^{(\tau)}\big)$ to converge strongly in $H^1(\Omega)\times \{0\}$ (with a suitable rate of convergence) and $\psi_2^{\frakKone,(\tau)}$ to be uniformly bounded in $L^2(\Om)$.

Below we consider a sequence \[\Big(\psi^{(\tau)}\Big)_{\tau \in(0,\overline \tau]},\] 
where the upper bound $\overline\tau$ can be any positive real constant in the cases of Theorem~\ref{Prop:Wellposedness_a_g_b} and Proposition~\ref{Prop:wellposedness_GFE_laws}. When in the setting of Proposition~\ref{Cor:wellposedness_GFEII}, we require $\overline\tau$ to be small enough so that \eqref{tau_bar_cond} holds for all $ \tau \in (0,\bar\tau]$.

In this section, we allow the following cases:
\begin{itemize}
	\item[a)] $\aaa >\bbb$, for which uniform-in-$\tau$ well-posedness was established in Theorem~\ref{Prop:Wellposedness_a_g_b};
	\item[b)] $\aaa \geq \bbb$ and $\tfrakKone \in L^2(0,T)$, whose $\tau$-uniform well-posedness has been the subject of Proposition~\ref{Prop:wellposedness_GFE_laws};
	\item[c)] $\aaa,\,\bbb>0$ and $\frakR = 1$, studied in
	 Proposition~\ref{Cor:wellposedness_GFEII}.
\end{itemize}
Below, we will not distinguish the cases a) and c) as their treatment is similar. The case b) will however be treated separately since the spaces of convergence to a vanishing relaxation limit will be different.  

\subsection{Weak vanishing relaxation time limits}
From the well-posedness analysis, we have established uniform-in-$\tau$ bounds on $\psi$ and $\psi_t$. In this section we use these to prove weak convergence of $\psi^{(\tau)}$ as $\tau \searrow 0$ in suitable spaces.
The limiting equation is 	 given formally by 
\begin{equation}\label{eq:limiting_eq}
	\aaa\psi_{tt} - c^2 \bbb \Delta \psi -   \delta \rt^b \frakKtwo \Lconv \Delta\psi_{tt}   = f.
\end{equation}
This equation will hold in the following weak sense:
\begin{equation}\label{eq:weak_limiting_eq}
	\begin{aligned}
		\begin{multlined}[t]
		-  \intT(\aaa\psit^{(0)}, v_t)_{L^2}\ds + c^2  \intT(\bbb \nabla \psi^{(0)}, \nabla v)_{L^2}\ds  +\rt^b \delta  \intT(\frakKtwo \Lconv \nabla \psi^{(0)}, \nabla v_{tt})_{L^2} \ds\\
			= - \rt^b \delta \intT (\frakKtwo \nabla \psi_0, \nabla v_t)_{L^2} \ds +\intT(f, v)_{L^2} \ds, 
		\end{multlined}	
	\end{aligned}		
\end{equation}
with initial data $\psi^{(0)}= \psi_0$, for all $v \in H^2(0,T;H_0^1(\Om))$ such that $v(T)=v_t(T)=0$.
\\[5mm]
\noindent{\it Case a) and c).}
\begin{proposition}\label{Prop:weak_conv_1}
	Let the assumptions of Theorem~\ref{Prop:Wellposedness_a_g_b} (if $\aaa > \bbb$) or of Proposition~\ref{Cor:wellposedness_GFEII} (if $\aaa,\bbb>0$ and $\frakR =1$) hold. Then a sequence $\Big(\psi^{(\tau)}\Big)_{\tau \in(0,\overline \tau]}$ of solutions to \eqref{eqn:ibvp_eqn} converges weakly-$*$ to the solution $ \psi^{(0)} \in L^\infty(0,T; H_0^1(\Om)) \cap W^{1,\infty}(0,T; L^2(\Om))$ of \eqref{eq:weak_limiting_eq} as $\tau \searrow 0$.
\end{proposition}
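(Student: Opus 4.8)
The plan is to pass to the limit $\tau\searrow0$ in the weak formulation \eqref{eqn:ibvp_eqn} (respectively in that of Proposition~\ref{Cor:wellposedness_GFEII}) along a subsequence: the $\tau$-weighted terms will be shown to disappear, the remaining ones to converge to the terms of \eqref{eq:weak_limiting_eq}, and uniqueness of the limit problem will then upgrade the subsequential convergence to convergence of the whole family. First I would collect the $\tau$-uniform bounds furnished by Theorem~\ref{Prop:Wellposedness_a_g_b} (in case a) and Proposition~\ref{Cor:wellposedness_GFEII} (in case c): $\psi^{(\tau)}$ is bounded in $L^\infty(0,T;\Honezero)$, $\psi_t^{(\tau)}$ in $L^\infty(0,T;L^2(\Om))$, $\tau^a\frakKone\Lconv\psi_{tt}^{(\tau)}$ in $L^\infty(0,T;L^2(\Om))$, and $\tau^a\frakKone\Lconv\nabla\psi_t^{(\tau)}$ (respectively $\sqrt{\check C(\tau,\delta)}\,\frakKone\Lconv\nabla\psi_t^{(\tau)}$ in case c) in $L^\infty(0,T;L^2(\Om))$. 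By the Banach--Alaoglu theorem I extract a subsequence, not relabelled, with $\psi^{(\tau)}\rightharpoonup\psi^{(0)}$ weak-$*$ in $L^\infty(0,T;\Honezero)$ and $\psi_t^{(\tau)}\rightharpoonup\psi_t^{(0)}$ weak-$*$ in $L^\infty(0,T;L^2(\Om))$, the latter being the distributional time derivative of the former; weak-$*$ lower semicontinuity of the norms then places $\psi^{(0)}$ in $L^\infty(0,T;\Honezero)\cap W^{1,\infty}(0,T;L^2(\Om))$.

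The crux of the argument is that the $\tau$-weighted terms vanish. This follows from the elementary decay estimate
\[
\big\|\tau^a\frakKone\Lconv\psi_t^{(\tau)}\big\|_{L^\infty(0,T;L^2(\Om))}\ \le\ \tau^a\,\|\frakKone\|_{\mm(0,T)}\,\big\|\psi_t^{(\tau)}\big\|_{L^\infty(0,T;L^2(\Om))}\ \longrightarrow\ 0\qquad(\tau\searrow0),
\]
a consequence of \eqref{eq:boundedness_assumption} and Young's convolution inequality for measures. Because $\psi_1=0$ one has $\tau^a\frakKone\Lconv\psi_{tt}^{(\tau)}=\big(\tau^a\frakKone\Lconv\psi_t^{(\tau)}\big)_t$ and $\tau^a\frakKone\Lconv\nabla\psi_t^{(\tau)}=\nabla\big(\tau^a\frakKone\Lconv\psi_t^{(\tau)}\big)$, so both converge to $0$ in $\mathcal{D}'(\Om\times(0,T))$; combined with their uniform $L^\infty(0,T;L^2(\Om))$ bounds this yields weak-$*$ convergence to $0$ in that space. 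Hence the first and fourth integrals of \eqref{eqn:ibvp_eqn}, as well as the datum term $\taua(\psitwo,v(0))_{L^2}$, tend to $0$ for every admissible $v$ (in case c only the $\tau^a c^2$ part of the coefficient multiplying $\int_0^T(\frakKone\Lconv\nabla\psi_t^{(\tau)},\nabla v)_{L^2}$ vanishes this way; the leftover $\rt^b\delta$ part is rewritten, using $\frakR=1$, i.e.\ $\frakKtwo=1\Lconv\frakKone$, and integration by parts, so as to produce the memory term of \eqref{eq:weak_limiting_eq}). \emph{This is the step I expect to be the main obstacle}: since $\nabla\psi_t^{(\tau)}$ is not controlled uniformly in $\tau$, the vanishing of $\tau^a\frakKone\Lconv\nabla\psi_t^{(\tau)}$ and of $\tau^a\frakKone\Lconv\psi_{tt}^{(\tau)}$ cannot be read off directly, but must be obtained by coupling the strong $L^\infty(0,T;L^2(\Om))$ decay of $\tau^a\frakKone\Lconv\psi_t^{(\tau)}$ with the uniform energy bounds, and in case c one must additionally keep the non-vanishing part $\rt^b\delta$ of the damping coefficient in play.

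Next I would pass to the limit in the surviving terms and identify the memory contribution. The terms $-\int_0^T(\aaa\psi_t^{(\tau)},v_t)_{L^2}$ and $c^2\int_0^T(\bbb\nabla\psi^{(\tau)},\nabla v)_{L^2}$ pass to the limit by the weak-$*$ convergences above, while $\int_0^T(f,v)_{L^2}$ is $\tau$-independent. Since $\frakKtwo\in L^1(0,T)$ by \eqref{eq:boundedness_assumption}--\eqref{eq:causality_eq}, convolution with $\frakKtwo$ is weak-$*$--to--weak-$*$ continuous on $L^\infty(0,T;\Honezero)$, so $\frakKtwo\Lconv\psi^{(\tau)}\rightharpoonup\frakKtwo\Lconv\psi^{(0)}$ weak-$*$ there. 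Writing $\frakKtwo\Lconv\nabla\psi_t^{(\tau)}=\big(\frakKtwo\Lconv\nabla\psi^{(\tau)}\big)_t-\frakKtwo\,\nabla\psi_0$ (using $\psi^{(\tau)}(0)=\psi_0$) and integrating by parts in time — the boundary contributions vanishing because $v_t(T)=0$ and $(\frakKtwo\Lconv\nabla\psi^{(\tau)})(0)=0$ — I obtain
\[
-\rt^b\delta\!\int_0^T\!\!\big(\frakKtwo\Lconv\nabla\psi_t^{(\tau)},\nabla v_t\big)_{L^2}\ds=\rt^b\delta\!\int_0^T\!\!\big(\frakKtwo\Lconv\nabla\psi^{(\tau)},\nabla v_{tt}\big)_{L^2}\ds+\rt^b\delta\!\int_0^T\!\!\big(\frakKtwo\nabla\psi_0,\nabla v_t\big)_{L^2}\ds,
\]
whose right-hand side converges to the corresponding terms of \eqref{eq:weak_limiting_eq} by the weak-$*$ convergence of $\frakKtwo\Lconv\nabla\psi^{(\tau)}$. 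Collecting everything shows that $\psi^{(0)}$ satisfies \eqref{eq:weak_limiting_eq} for all $v\in H^2(0,T;\Honezero)$ with $v(T)=v_t(T)=0$.

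Finally, for the initial condition I would note that the uniform bounds put $(\psi^{(\tau)})_\tau$ in a bounded subset of the space $W_{\delta_0}$ of Lemma~\ref{Lemma:Caputo_compact_embedding} (take $\genk=\delta_0$, $\tilde\genk=1\in L^2(0,T)$, and the chain $\Honezero\dhookrightarrow L^2(\Om)\hookrightarrow L^2(\Om)$), so that $\psi^{(\tau)}\to\psi^{(0)}$ strongly in $C([0,T];L^2(\Om))$; since $\psi^{(\tau)}(0)=\psi_0$ for every $\tau$, passing to the limit gives $\psi^{(0)}(0)=\psi_0$. To promote the subsequential statement to convergence of the whole family, I would prove uniqueness of solutions to \eqref{eq:weak_limiting_eq} in $L^\infty(0,T;\Honezero)\cap W^{1,\infty}(0,T;L^2(\Om))$ by adapting the test-function device of the uniqueness part of Theorem~\ref{Prop:Wellposedness_a_g_b} — testing the equation satisfied by the difference of two solutions with the time-antiderivative $w(t)=\int_t^{t'}\psi^{(0)}(s)\ds$ on $[0,t']$, extended by $0$, together with a density argument to justify its admissibility as a test function, and invoking \eqref{eqn:causality_aaumption} and \eqref{eqn:positivity} (or \eqref{eqn:positivity_nonsing}). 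Uniqueness then forces every subsequence to have the same limit $\psi^{(0)}$, hence the whole family converges weak-$*$.
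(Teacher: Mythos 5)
Most of your argument is sound and runs parallel to the paper's: the uniform bounds, the weak-$*$ extraction, the Aubin--Lions--Simon step for attainment of $\psi_0$, and the rewriting of the damping term via $\frakKtwo\Lconv\nabla\psi_t^{(\tau)}=(\frakKtwo\Lconv\nabla\psi^{(\tau)})_t-\frakKtwo\nabla\psi_0$ all match. Your mechanism for killing the $\tau$-weighted terms is slightly different from the paper's --- you combine the strong decay of $\tau^a\frakKone\Lconv\psi_t^{(\tau)}$ in $L^\infty(0,T;L^2(\Om))$ with distributional identification and the uniform $L^\infty(L^2)$ bounds on $\tau^a\frakKone\Lconv\psi_{tt}^{(\tau)}$ and $\tau^a\frakKone\Lconv\nabla\psi_t^{(\tau)}$, whereas the paper integrates by parts (and time-reverses) to move all derivatives onto $v$ and then only needs the $\tau$-uniform bounds on $\psi_t^{(\tau)}$ and $\psi^{(\tau)}$ themselves --- but both routes are valid, and your remark that in case c) only the $\tau^ac^2$ portion of the merged damping coefficient vanishes is the right observation.

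The genuine gap is in the uniqueness step for the limit problem. You propose to test the difference equation with the single antiderivative $w(t)=\int_t^{t'}\bar\psi(s)\ds$ extended by zero. This $w$ is not an admissible test function for \eqref{eq:weak_limiting_eq}: admissibility requires $v\in H^2(0,T;\Honezero)$ with $v(T)=v_t(T)=0$, whereas here $w_{tt}=-\bar\psi_t$ lies only in $L^\infty(0,T;L^2(\Om))$ (not in $L^2(0,T;\Honezero)$), and $w_t$ jumps at $t'$ by $\bar\psi(t')$, so $w\notin H^2$ and $w_t(t'^-)\neq 0$ in general. Concretely, the memory term $\rt^b\delta\int_0^{t'}(\frakKtwo\Lconv\nabla\bar\psi,\nabla w_{tt})_{L^2}\ds$ would require pairing against $\nabla\bar\psi_t$, which does not exist in this low-regularity class; no density argument can repair this, because the defect is a missing spatial derivative of $\bar\psi_t$, not an approximation issue. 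The fix (used in the paper) is to take one more antiderivative of the once-integrated difference: $w(t)=\int_t^{t'}\int_s^{t'}1\Lconv\bar\psi(\zeta)\,\textup{d}\zeta\ds$, so that $w_{ttt}=\bar\psi$, $w_{tt}=1\Lconv\bar\psi\in L^\infty(0,T;\Honezero)$, and $w(t')=w_t(t')=w_{tt}(0)=0$; then the memory term becomes $\int_0^{t'}(\frakKtwo\Lconv\nabla w_{ttt},\nabla w_{tt})_{L^2}\ds=\int_0^{t'}(\frakKtwo\Lconv y_t,y)_{L^2}\ds$ with $y=1\Lconv\nabla\bar\psi$, $y(0)=0$, which is nonnegative by \eqref{eqn:positivity} (or \eqref{eqn:positivity_nonsing}), and one concludes $1\Lconv\bar\psi=0$, hence $\bar\psi=0$. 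Without a correct uniqueness argument the subsequence--subsequence step, and thus convergence of the whole family, does not follow.
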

\begin{proof}
	From the energy estimate of Theorem~\ref{Prop:Wellposedness_a_g_b} (or of Proposition~\ref{Cor:wellposedness_GFEII}), we know that the following quantities are bounded uniformly in $\tau$:
\begin{align}
	\psi^{(\tau)} &
	\quad \textrm{bounded in } L^\infty(0,T; H_0^1(\Om)),\\
	\psi^{(\tau)}_t &
	\quad \textrm{bounded in } L^\infty(0,T; L^2(\Om)). 
\end{align}
This implies the existence of a subsequence (not relabeled) such that
\begin{alignat}{2}
	\psi^{(\tau)} &\stackrel{}{\relbar\joinrel\rightharpoonup} \psi 
	\qquad && \textrm{weakly-$*$ in } L^\infty(0,T; H_0^1(\Om)),\\
	\psi^{(\tau)}_t &\stackrel{}{\relbar\joinrel\rightharpoonup} \psi_t 
	&& \textrm{weakly-$*$ in } L^\infty(0,T; L^2(\Om)). 
\end{alignat}
In particular, thanks to the Aubin-Lions-Simon Lemma (see \cite[Corollary 4]{simon1986compact}), we have that 
\begin{align}\label{eq:strong_conv}
	\psi^{(\tau)} &\stackrel{}{\relbar\joinrel\rightarrow} \psi 
	\quad \textrm{strongly in } C([0,T]; L^2(\Om)). 
\end{align}
Thus the initial data $\psi^{(0)}(0) = \psi_0$ is attained.
Moreover, by boundedness of the operator $T_{\frakKtwo}$, $\frakKtwo \Lconv \psi^{(\tau)}$ is uniformly bounded in $L^\infty(0,T; \Honezero)$, thus
\begin{align}
\frakKtwo \Lconv \psi^{(\tau)} &\stackrel{}{\relbar\joinrel\rightharpoonup} \frakKtwo \Lconv \psi \quad \textrm{weakly-$*$ in } L^\infty(0,T; \Honezero). 
\end{align}

We would like to use the established weak convergences to go to the limit in the weak form
	\begin{equation}
	\begin{aligned}
		\begin{multlined}[t]- \taua\intT \big(\frakKone * \psitt^{(\tau)} , v_t\big)_{L^2} \ds-  \intT(\aaa\psit^{(\tau)}, v_t)_{L^2}\ds + c^2  \intT(\bbb \nabla \psi^{(\tau)}, \nabla v)_{L^2}\ds \\
			+\tau^a c^2 \intT\big(\frakKone \Lconv \nabla \psit^{(\tau)} , \nabla v\big)_{L^2}\ds +\rt^b \delta  \intT(\frakKtwo \Lconv \nabla \psi^{(\tau)}, \nabla v_{tt})_{L^2} \ds
			\\= - \taua \big(\psitwo , v(0)\big)_{L^2} - \rt^b \delta \intT (\frakKtwo \nabla \psi_0, \nabla v_t)_{L^2} \ds  + \intT(f, v)_{L^2} \ds,
		\end{multlined}	
	\end{aligned}		
\end{equation}
for all $v \in H^2(0,T;H_0^1(\Om))$ with $v(T) = v_t(T) = 0$. In particular, to show the desired limiting behavior, it suffices to show that 
the $\tau$-weighted terms on the left-hand side above converge weakly to 0. We borrow an idea given in \cite{kaltenbacher2023vanishing}, where integration by parts is used to show the sought-after property. To this end, recall that in our setting
\[\frakKone * \psitt^{(\tau)} = (\frakKone * \psit^{(\tau)})_t,\]
since $\psit^{(\tau)}(0) = \psi_1 = 0$. 
Therefore
\begin{equation}
	\begin{aligned}
		\begin{multlined}
			\taua\intT \big(\frakKone * \psitt^{(\tau)} , v_t\big)_{L^2} \ds =  \taua\intT \big((\frakKone * \psit^{(\tau)})_t , v_{t}\big)_{L^2} \ds 
			\\
			=-\taua\intT \big(\frakKone * \psit^{(\tau)} , v_{tt}\big)_{L^2} \ds 
			\ {\relbar\joinrel\rightarrow} \ 0,
		\end{multlined}
	\end{aligned}
\end{equation}
where we have used the uniform boundedness in $\tau$ of $\|\psi_t^{(\tau)}\|_{L^2(0,T;L^2(\Omega))}$. 
To treat the fourth term on the left-hand side, we introduce the time reversed variable $\widetilde v (s) = v(T-s)$ for all $s \in (0,T)$
to write 
\begin{equation}
	\begin{multlined}
		\intT(\frakKone \Lconv \nabla\psit^{(\tau)}, \nabla v)_{L^2} \ds 
		= \intO \frakKone \Lconv \nabla  \psit^{(\tau)} \Lconv \nabla \widetilde v \dx 
		= \intT \big(\nabla\psit^{(\tau)}(s) , (\frakKone \Lconv \nabla \widetilde v_{t})(T-s)\big)_{L^2} \ds.
	\end{multlined}
\end{equation}
Using that $\widetilde v_t(0) = - v_t(T) = 0$, we integrate by parts and write
\begin{equation}
	\begin{aligned}
		\begin{multlined}
			\taua \intT\big(\frakKone \Lconv \nabla \psit^{(\tau)} , \nabla v\big)_{L^2}\ds =  \taua\intT \big(\nabla\psit^{(\tau)}(s) , (\frakKone \Lconv \nabla \widetilde v_{t})(T-s)\big)_{L^2} \ds \\
			=  \taua\intT \big(\nabla\psi^{(\tau)}(s) , (\frakKone \Lconv \nabla \widetilde v_{tt})(T-s)\big)_{L^2} \ds + \taua \big(\nabla\psi_0 , (\frakKone \Lconv \nabla \widetilde v_{tt})(T)\big)_{L^2}
			\ {\relbar\joinrel\rightarrow} \ 0,
		\end{multlined}
	\end{aligned}
\end{equation}
as $\tau \searrow 0$.
\\
\indent Uniqueness of the limit is ensured by seeing that the difference of two solutions of the limiting problem, $\bar \psi = \psi^{(1)} - \psi^{(2)}$, would have to solve 
 \begin{equation}\label{eq:uniquenes_weak_limit}
 	\begin{aligned}
 		\begin{multlined}[t]
 		-  \intT(\aaa\bar\psi_t, v_t)_{L^2}\ds + c^2  \intT(\bbb \nabla \bar\psi, \nabla v)_{L^2}\ds 
 		 +\rt^b \delta  \intT(\frakKtwo \Lconv \nabla \bar\psi, \nabla v_{tt})_{L^2} \ds
 			= 0,
 		\end{multlined}	
 	\end{aligned}		
 \end{equation}
with zero initial data, for all $v\in H^2(0,T;H^1_0(\Omega))$ such that $v(T)=v_t(T)=0$. If we were allowed to test with $\bar\psit$ the uniqueness would be immediate. However, $\bar\psit$ lacks regularity in our setting to be a valid test function. We devise an alternative testing strategy to accomodate the low-regularity setting. Let
\begin{equation}
	w (t)=
	\left\{\begin{array}{ll}
		\int_t^{t'} \int_s^{t'} 1\Lconv\bar\psi(\zeta) \, \textup{d} \zeta \ds  \quad  & \textrm{ if} \ \ 0\leq t\leq t',\\
		0 \quad  & \textrm{ if} \ \ t'\leq t\leq T.
	\end{array}\right.
\end{equation}
Thus we have $w(t') = w_t(t')= w_{tt}(0) =0$, and $w_{ttt} = \bar\psi$. Integrating by parts in \eqref{eq:uniquenes_weak_limit} yields 
 \begin{equation}
	\begin{aligned}
		\begin{multlined}[t]
			\int^{t'}_0(\aaa w_{ttt}, w_{tt})_{L^2}\ds + c^2  \int^{t'}_0(\bbb \nabla w_{tt}, \nabla w_t)_{L^2}\ds 
			+\rt^b \delta  \int^{t'}_0(\frakKtwo \Lconv \nabla w_{ttt}, \nabla w_{tt})_{L^2} \ds
			= 0.
		\end{multlined}	
	\end{aligned}		
\end{equation}
Due to the coercivity of $\frakKtwo$ (see \eqref{eqn:positivity} or \eqref{eqn:positivity_nonsing}), we obtain that $w_{tt} = 1 \Lconv \bar\psi= 0$ a.e., which ensures uniqueness. 
By a subsequence-subsequence argument, the whole
sequence $\Big(\psi^{(\tau)}\Big)_{\tau \in(0,\overline \tau]}$ converges to the same limit $\psi^{(0)}$.
\qed
\color{black}
\end{proof}

\noindent{\it Case b).}\\[2mm]
An analogous statement can be made for the case $\aaa = \bbb$ with general heat flux laws. In particular, the following uniform-in-$\tau$ result holds for all fractional MGT equations appearing in Table~\ref{table:r_kernels} (with eventual restrictions on $\alpha$; see Table~\ref{table:r_kernels}) and remains valid for the linear models of \cite{kaltenbacher2022time} with the previously discussed modification on the meaning of the leading fractional derivative (or by setting $\psitt(0) = 0$; see discussion on page \pageref{discussion:RL_leading_term}).

\begin{proposition}\label{Prop:weak_conv_2}
	Let the assumptions of Proposition~\ref{Prop:Wellposedness_a_g_b} hold. Then a family $\Big(\psi^{(\tau)}\Big)_{\tau \in(0,\overline \tau]}$ of solutions to \eqref{eqn:ibvp_eqn}, converges weakly in $L^2(0,T; H_0^1(\Om)) \cap H^1(0,T; L^2(\Om))$ to the solution of \eqref{eq:weak_limiting_eq} as $\tau \searrow 0$.
\end{proposition}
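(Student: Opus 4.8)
\emph{Approach.} I follow the same template as the proof of Proposition~\ref{Prop:weak_conv_1}: extract weakly convergent subsequences from the uniform-in-$\tau$ bounds, pass to the limit in the weak formulation while showing the $\tau$-weighted memory terms die out, identify the limit through a uniqueness argument, and finally upgrade subsequential convergence to convergence of the whole family. The only structural change is that the uniform bounds are now those of Proposition~\ref{Prop:wellposedness_GFE_laws}, which are $L^2$ in time, so the convergence obtained is weak (not weak-$*$) in $L^2(0,T;\Honezero)\cap H^1(0,T;L^2(\Om))$. Precisely, the energy estimate of Proposition~\ref{Prop:wellposedness_GFE_laws} gives, uniformly in $\tau\in(0,\overline\tau]$, that $\psi^{(\tau)}$ is bounded in $L^2(0,T;\Honezero)$, $\psi^{(\tau)}_t$ in $L^2(0,T;L^2(\Om))$, and both $\taua\frakKone\Lconv\psi^{(\tau)}_{tt}$ and $\taua\frakKone\Lconv\nabla\psi^{(\tau)}_t$ in $L^2(0,T;L^2(\Om))$. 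By reflexivity, a subsequence (not relabeled) satisfies $\psi^{(\tau)}\rightharpoonup\psi^{(0)}$ in $L^2(0,T;\Honezero)$ and $\psi^{(\tau)}_t\rightharpoonup\psi^{(0)}_t$ in $L^2(0,T;L^2(\Om))$, whence $\psi^{(0)}\in L^2(0,T;\Honezero)\cap H^1(0,T;L^2(\Om))$; and since $\frakKtwo\in L^1(0,T)$ (by \eqref{eq:boundedness_assumption} and \eqref{eq:causality_eq}), convolution with $\frakKtwo$ is bounded, hence weakly continuous, on $L^2(0,T;\Honezero)$, so $\frakKtwo\Lconv\psi^{(\tau)}\rightharpoonup\frakKtwo\Lconv\psi^{(0)}$ weakly there.

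\emph{Passing to the limit.} I insert $\psi^{(\tau)}$ into the weak formulation of Proposition~\ref{Prop:wellposedness_GFE_laws}, restricted to test functions $v\in H^2(0,T;\Honezero)$ with $v(T)=v_t(T)=0$, and rewrite it exactly as in the proof of Proposition~\ref{Prop:weak_conv_1}. Since $\psi^{(\tau)}_t(0)=\psi_1=0$, one has $\frakKone\Lconv\psi^{(\tau)}_{tt}=(\frakKone\Lconv\psi^{(\tau)}_t)_t$, and moreover $\frakKtwo\Lconv\nabla\psi^{(\tau)}_t=(\frakKtwo\Lconv\nabla\psi^{(\tau)})_t-\frakKtwo\nabla\psi_0$; integrating by parts in time --- the boundary contributions vanishing because $v_t(T)=0$ and because Laplace convolutions vanish at $t=0$ --- moves all time differentiation onto $v$ and produces, on the right-hand side, the extra term $-\rt^b\delta\intT(\frakKtwo\nabla\psi_0,\nabla v_t)_{L^2}\ds$ of \eqref{eq:weak_limiting_eq}. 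The $\tau$-weighted terms then read $\taua\intT(\frakKone\Lconv\psi^{(\tau)}_t,v_{tt})_{L^2}\ds$ together with, after the time-reversal substitution $\widetilde v(s)=v(T-s)$ from the proof of Proposition~\ref{Prop:weak_conv_1}, a sum of terms of the form $\taua$ times $L^2(0,T;L^2(\Om))$- or $C([0,T];L^2(\Om))$-pairings involving $\psi^{(\tau)}$ and $\nabla\psi_0$; each is a product of $\taua\to0$ with a quantity bounded uniformly in $\tau$, so all vanish. The remaining, $\tau$-independent terms converge by the weak convergences of $\psi^{(\tau)}$, $\psi^{(\tau)}_t$ and $\frakKtwo\Lconv\psi^{(\tau)}$; hence $\psi^{(0)}$ solves \eqref{eq:weak_limiting_eq}.

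\emph{Initial datum, uniqueness, and main obstacle.} As $\psi^{(0)}\in H^1(0,T;L^2(\Om))$, it has a trace at $t=0$ in $L^2(\Om)$, and weak convergence in $H^1(0,T;L^2(\Om))$ forces $\psi^{(\tau)}(0)\rightharpoonup\psi^{(0)}(0)$ in $L^2(\Om)$; since $\psi^{(\tau)}(0)=\psi_0$ for every $\tau$, the datum $\psi^{(0)}(0)=\psi_0$ is attained. Uniqueness for \eqref{eq:weak_limiting_eq} is established as in Proposition~\ref{Prop:weak_conv_1}: the difference $\bar\psi$ of two solutions solves the homogeneous problem with zero initial datum, and testing with the iterated time-antiderivative $w$ (chosen so that $w_{ttt}=\bar\psi$ and $w(t')=w_t(t')=w_{tt}(0)=0$) together with the coercivity \eqref{eqn:positivity} (or \eqref{eqn:positivity_nonsing}) of $\frakKtwo$ yields $1\Lconv\bar\psi=0$, hence $\bar\psi=0$; a subsequence--subsequence argument then gives convergence of the entire family. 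I expect the main obstacle to be the second step --- proving that the three $\tau$-weighted memory terms genuinely vanish. In contrast with Proposition~\ref{Prop:weak_conv_1}, here the memory quantities are controlled only in $L^2$ in time and only after multiplication by $\taua$, which forces integration by parts in time (hence the need for $H^2$ test functions) and requires checking carefully that the $t=T$ boundary terms are absorbed by the uniform $C([0,T];L^2(\Om))$-bound derived from $H^1(0,T;L^2(\Om))$, no $L^\infty$-in-time bound being available in this regularity class.
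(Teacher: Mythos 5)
Your proof is correct and follows exactly the route the paper intends: the paper gives no separate proof of Proposition~\ref{Prop:weak_conv_2}, presenting it as the analogue of Proposition~\ref{Prop:weak_conv_1} with the $L^\infty$-in-time bounds of Theorem~\ref{Prop:Wellposedness_a_g_b} replaced by the $L^2$-in-time bounds of Proposition~\ref{Prop:wellposedness_GFE_laws}, which is precisely what you carry out (including the correct reading that the relevant uniform estimates are those requiring $\tfrakKone\in L^2(0,T)$). Your only departure is attaining the initial datum via weak continuity of the trace $u\mapsto u(0)$ on $H^1(0,T;L^2(\Om))$ rather than via Aubin--Lions--Simon compactness; both are valid and yours is, if anything, more economical here.
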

Propositions~\ref{Prop:weak_conv_1} and \ref{Prop:weak_conv_2} ensure convergence to a weak limit. As a byproduct, they establish that equation \eqref{eq:weak_limiting_eq} has a solution in the following space:
\begin{itemize}
\item[\textbullet] $L^\infty(0,T; H_0^1(\Om)) \cap W^{1,\infty}(0,T; L^2(\Om))$ in the case of Proposition~\ref{Prop:weak_conv_1} (i.e., if $\aaa>\bbb$, or if $\aaa,\bbb>0$ with $\frakKtwo$ being expressible as $\frakKtwo = \frakKone\Lconv 1$, with $\frakKone$ verifying assumptions~\eqref{eq:boundedness_assumption}, \eqref{eqn:resolvent}, and \eqref{eqn:k1_positivity_lin})
\item[\textbullet] $L^2(0,T; H_0^1(\Om)) \cap H^1(0,T; L^2(\Om))$ in that of Proposition~\ref{Prop:weak_conv_2} (i.e, when $\aaa\geq \bbb$ and $\frakKtwo\in L^1(0,T)$). 
\end{itemize}
Furthermore, the solution is unique, and we have continuous dependence of the solution on the initial data. To prove the latter, we use the weak lower semi-continuity of the norms to take the $\tau$ limit in the previously established stability estimate (see Theorem~\ref{Prop:Wellposedness_a_g_b}, Proposition~\ref{Prop:wellposedness_GFE_laws}, and 
Proposition~\ref{Cor:wellposedness_GFEII}). Thus, initial-boundary-value problem \eqref{eq:weak_limiting_eq} is well-posed.

Below, we establish the rate of convergence to the identified weak limit.
\subsection{Convergence rates for the vanishing relaxation time}
In the course of the proof of Proposition~\ref{Prop:weak_conv_1}, we obtained also strong convergence of $\Big(\psi^{(\tau)}\Big)_{\tau \in(0,\overline \tau]}$ in $C([0,T];L^2(\Om))$; see~\eqref{eq:strong_conv}.
In what follows, we establish its rate. 
Recall that $\psi^{(0)}$ refers to the solution of the limiting problem given in \eqref{eq:weak_limiting_eq}.
\\[5mm]
\noindent{\it Case a) and c).}
\begin{theorem}\label{Prop:Cauchy_seq1}
	Let the assumptions of Theorem~\ref{Prop:Wellposedness_a_g_b} (if $\aaa > \bbb$) or of Proposition~\ref{Cor:wellposedness_GFEII} (if $\aaa,\bbb>0$ and $\frakR =1$) hold. Then the family $\Big(\psi^{(\tau)}\Big)_{\tau \in(0,\overline \tau]}$ of solutions to \eqref{eqn:ibvp_eqn} converges to $\psi^{(0)}$ in the following sense:
	\[\|\psi^{(\tau)}- \psi^{(0)}\|_{L^\infty(0,T;L^2(\Omega))} = O(\sqrt{\tau^a}) \qquad \textrm{as } \tau \searrow 0.\]
\end{theorem}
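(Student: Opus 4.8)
The plan is to derive an equation for the error $\bar\psi:=\psi^{(\tau)}-\psi^{(0)}$ whose entire right‑hand side carries the factor $\taua$, and then to test it against an iterated time‑primitive of $\bar\psi$ — the very test function used in the uniqueness part of Proposition~\ref{Prop:weak_conv_1} — exploiting that the relevant norms of this test function are themselves controlled by the (small) quantities one is estimating.

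First I would subtract the weak formulation of $\psi^{(\tau)}$ used in Proposition~\ref{Prop:weak_conv_1} from \eqref{eq:weak_limiting_eq}; the source $f$ and the datum term $\rt^b\delta(\frakKtwo\nabla\psi_0,\nabla v_t)_{L^2}$ cancel, leaving, for all $v\in H^2(0,T;\Honezero)$ with $v(T)=v_t(T)=0$,
\begin{multline*}
	-\intT(\aaa\bar\psi_t,v_t)_{L^2}\ds+c^2\intT(\bbb\nabla\bar\psi,\nabla v)_{L^2}\ds+\rt^b\delta\intT(\frakKtwo\Lconv\nabla\bar\psi,\nabla v_{tt})_{L^2}\ds\\
	=\taua\intT(\frakKone\Lconv\psi_{tt}^{(\tau)},v_t)_{L^2}\ds-\taua c^2\intT(\frakKone\Lconv\nabla\psi_t^{(\tau)},\nabla v)_{L^2}\ds-\taua(\psitwo,v(0))_{L^2},
\end{multline*}
together with $\bar\psi(0)=0$ (in case c) I would start analogously from Proposition~\ref{Cor:wellposedness_GFEII}, bringing the viscous term to the same form via $\frakR=1$). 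From the uniform energy estimates I would record: $\bar\psi$ bounded in $L^\infty(0,T;\Honezero)$, $\bar\psi_t$ in $L^\infty(0,T;L^2(\Om))$, uniformly in $\tau$; and $\taua\frakKone\Lconv\psi_t^{(\tau)}$ is $O(\taua)$ in $L^\infty(0,T;L^2(\Om))$ (as $\psi_t^{(\tau)}$ is uniformly bounded there and $\frakKone\in\mm(0,T)$), whereas $\taua\frakKone\Lconv\nabla\psi_t^{(\tau)}$ and $\taua\frakKone\Lconv\psi_{tt}^{(\tau)}$ are merely uniformly bounded.

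Next, for fixed $t'\in(0,T]$ I would test with $w(t)=\int_t^{t'}\!\int_s^{t'}(1\Lconv\bar\psi)(\zeta)\,\dr\,\ds$ for $t\le t'$ and $w(t)=0$ for $t\ge t'$, so that $w\in H^2(0,T;\Honezero)$, $w(T)=w_t(T)=w_{tt}(0)=0$, $w_{tt}=1\Lconv\bar\psi$, $w_{ttt}=\bar\psi$. Integrating by parts in time, the left side produces $\tfrac\aaa2\|(1\Lconv\bar\psi)(t')\|_{L^2}^2+\tfrac{c^2\bbb}2\|\nabla(1\Lconv 1\Lconv\bar\psi)(t')\|_{L^2}^2$ plus $\rt^b\delta\int_0^{t'}(\frakKtwo\Lconv y_t,y)_{L^2}\ds$ with $y=\nabla(1\Lconv\bar\psi)$, $y(0)=0$, which is $\ge0$ by \eqref{eqn:positivity} (or \eqref{eqn:positivity_nonsing}) — this is exactly what forces $w_{tt}=1\Lconv\bar\psi$, mirroring the uniqueness argument. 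The key point is that $w$ is built out of the small quantities: $\|w(0)\|_{L^2}$ and $\|\nabla w\|_{L^\infty(0,T;L^2)}$ are bounded, up to powers of $T$, by $\|1\Lconv\bar\psi\|_{L^\infty(0,T;L^2)}$ and $\|\nabla(1\Lconv 1\Lconv\bar\psi)\|_{L^\infty(0,T;L^2)}$, and $\|\nabla w_t\|_{L^\infty(0,T;L^2)}$ by the latter. Hence: the term with $\frakKone\Lconv\psi_{tt}^{(\tau)}=(\frakKone\Lconv\psi_t^{(\tau)})_t$ is integrated by parts onto $w_{tt}$ and is $\lesssim_T\taua\|1\Lconv\bar\psi\|_{L^\infty(0,T;L^2)}$; the datum term is $\lesssim_T\taua\|1\Lconv\bar\psi\|_{L^\infty(0,T;L^2)}$; and for the viscous term, which cannot be bounded directly since $\taua\frakKone\Lconv\nabla\psi_t^{(\tau)}$ is only $O(1)$, I would time‑reverse, use associativity of the Laplace convolution and integrate by parts in time to shift the derivative off $\psi_t^{(\tau)}$, leaving $\nabla\psi^{(\tau)}$ and $\nabla\psi_0$ (uniformly bounded) paired with $\taua$ times $\frakKone\Lconv$ of gradients of $w,w_t$, whence it is $\lesssim_T\taua\|\nabla(1\Lconv 1\Lconv\bar\psi)\|_{L^\infty(0,T;L^2)}$ — the same device as in the proof of Proposition~\ref{Prop:weak_conv_1}.

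Collecting and taking the supremum over $t'$, with $W:=\max\{\|1\Lconv\bar\psi\|_{L^\infty(0,T;L^2)},\ \|\nabla(1\Lconv 1\Lconv\bar\psi)\|_{L^\infty(0,T;L^2)}\}$, this yields the self‑improving inequality $W^2\lesssim_T\taua W$, hence $W\lesssim_T\taua$; in particular $\|1\Lconv\bar\psi\|_{L^\infty(0,T;L^2)}=O(\taua)$. Finally, since $\bar\psi=(1\Lconv\bar\psi)_t$ with $(1\Lconv\bar\psi)(0)=0$, $\bar\psi(0)=0$ and $(1\Lconv\bar\psi)_{tt}=\bar\psi_t$ uniformly bounded in $L^2(\Om)$, a Landau–Kolmogorov interpolation in time for the $L^2(\Om)$‑valued function $1\Lconv\bar\psi$ gives $\|\bar\psi\|_{L^\infty(0,T;L^2)}\lesssim_T\|1\Lconv\bar\psi\|_{L^\infty(0,T;L^2)}^{1/2}\|\bar\psi_t\|_{L^\infty(0,T;L^2)}^{1/2}+T^{-1}\|1\Lconv\bar\psi\|_{L^\infty(0,T;L^2)}=O(\sqrt{\taua})$, which is the claim; a subsequence–subsequence argument extends it to the whole family, $\psi^{(0)}$ being the unique limit from Proposition~\ref{Prop:weak_conv_1}. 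The main obstacle is that $\bar\psi_t$ is not an admissible test function (its spatial regularity is too low): this forces working with the iterated primitive $w$, so that $\bar\psi$ is only reached indirectly through $1\Lconv\bar\psi$, and it also accounts for the loss of a square root in the final interpolation step. The secondary, more technical difficulty will be exposing the $\taua$ factor in the viscous term, which must be extracted by integration by parts in time rather than by a direct bound.
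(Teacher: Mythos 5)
Your proof is correct, but it reaches Theorem~\ref{Prop:Cauchy_seq1} by a genuinely different route than the paper. The paper attacks $\|\bar\psi\|_{L^\infty(L^2)}$ head-on with the \emph{single} primitive $w(t)=\int_t^{t'}\bar\psi(s)\ds$; since that test function is not admissible for the weak limiting form \eqref{eq:weak_limiting_eq} (its second time derivative is $-\bar\psi_t$, which lacks spatial $H^1_0$ regularity), the paper is forced into a Cauchy-sequence argument in $\tau$, comparing $\psi^{(\tau_1)}$ with $\psi^{(\tau_2)}$ and passing $\tau_2\searrow 0$ at the very end; the half-power appears there because right-hand-side terms such as $\|\bar\psi_t\|_{L^2(0,t';L^2)}$ cannot be absorbed and are merely bounded by $O(1)$, leaving $\|\bar\psi(t')\|^2\lesssim(\taua_1+\taua_2)\cdot O(1)$. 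You instead first establish $\|1\Lconv\bar\psi\|_{L^\infty(L^2)}=O(\taua)$ with the \emph{double} primitive — which is precisely the paper's proof of Theorem~\ref{Prop:Cauchy_seq2}, and which does admit the direct difference $\psi^{(\tau)}-\psi^{(0)}$ because $w_{tt}=1\Lconv\bar\psi$ is $H^1_0$-valued — and then recover the claim by a Landau--Kolmogorov interpolation $\|\bar\psi\|_{L^\infty(L^2)}\lesssim_T\|1\Lconv\bar\psi\|_{L^\infty(L^2)}^{1/2}\|\bar\psi_t\|_{L^\infty(L^2)}^{1/2}+T^{-1}\|1\Lconv\bar\psi\|_{L^\infty(L^2)}$. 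This is legitimate: the interpolation only needs $1\Lconv\bar\psi\in W^{2,\infty}(0,T;L^2)$ and a $\tau$-uniform bound on $\bar\psi_t$ in $L^\infty(0,T;L^2)$, which is exactly what Theorem~\ref{Prop:Wellposedness_a_g_b} and Proposition~\ref{Cor:wellposedness_GFEII} supply in cases a) and c) (and what fails in case b), consistently with the theorem's hypotheses). What your route buys is the elimination of the Cauchy-sequence detour and a clean structural explanation of the rate loss — one factor of $\sqrt{\taua}$ is traded for the uniform bound on $\bar\psi_t$ — effectively making Theorem~\ref{Prop:Cauchy_seq1} a corollary of Theorem~\ref{Prop:Cauchy_seq2}; what it costs is that the argument is no longer self-contained for the quantity actually being estimated, and the final constant inherits a $T$-dependence from the interpolation that the paper's direct estimate avoids. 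All individual steps (the cancellation of the $f$ and $\frakKtwo\nabla\psi_0$ terms, the sign of the $\frakKtwo$ contribution via \eqref{eqn:positivity} with $y=\nabla(1\Lconv\bar\psi)$, $y(0)=0$, the integrations by parts exposing $\taua$ in the leading and viscous terms, and the absorption of the $w$-dependent factors) check out.
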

\begin{proof}
	Let $t'\in (0,T)$. Similarly to the idea used to show uniqueness in Theorem~\ref{Prop:Wellposedness_a_g_b}, we intend to introduce a test function of the form
	\begin{equation}
		w (t)=
		\left\{\begin{array}{ll}
			\int_t^{t'} \bar\psi(s) \ds \quad  & \textrm{ if} \ \ 0\leq t\leq t',\\
			0 \quad  & \textrm{ if} \ \ t'\leq t\leq T.
		\end{array}\right.
	\end{equation}
	having in mind that $ w_t = -\bar\psi$ and $ w_{tt} = -\bar\psit$.
	The proof of convergence will follow then by showing that $\Big(\psi^{(\tau)}\Big)_{\tau \in(0,\overline \tau]}$ is a Cauchy sequence with respect to the norm $\|\cdot\|_{L^\infty(0,T;L^2(\Omega))}$. We have to do so because the weak form of the limiting equation (see \eqref{eq:weak_limiting_eq}) is too weak to allow us to consider directly the difference $\psi^{(\tau)}-\psi^{(0)}$ when testing with $w$.
	
	Let then $\tau_1,\tau_2>0$ and let $\bar{\psi}=\psi^{(\tau_1)}-\psi^{(\tau_2)}$. Since each of $\psi^{(\tau_1)}$ and $\psi^{(\tau_2)}$ are uniformly-in-$\tau$ bounded in $L^\infty(0,T; H_0^1(\Om)) \cap W^{1,\infty}(0,T; L^2(\Om))$, then so is $\bar\psi$.

Then, $\bar \psi$ satisfies:
\begin{equation}\label{eqn:Cauchy_testing}
	\begin{aligned}
		\begin{multlined}[t]
			- \tau_1^a\int^{t'}_0 \big(\frakKone * \bar{\psi}_{tt} , w_{t}\big)_{L^2} \ds
			-  \int^{t'}_0(\aaa\bar{\psi}_{t}, w_t)_{L^2}\ds + c^2  \int^{t'}_0(\bbb \nabla \bar\psi, \nabla w)_{L^2}\ds 
			\\-\tau_1^a c^2 \int^{t'}_0\big(\frakKone \Lconv \nabla\bar{\psi}, \nabla w_t\big)_{L^2}\ds 
			-\rt^b \delta  \int^{t'}_0(\frakKtwo \Lconv \nabla \bar{\psi}_{t}, \nabla w_t)_{L^2} \ds\\
			= (\taua_1 -\taua_2 )\int^{t'}_0(\frakKone \Lconv \psi^{(\tau_2)}_{tt}, w_t)_{L^2} \ds + (\taua_2 -\taua_1 )\int^{t'}_0(\frakKone  \Lconv \nabla\psit^{(\tau_2)}, \nabla w)_{L^2} \ds\\
			+( \taua_2 - \taua_1) \big(\psitwo , w(0)\big)_{L^2} ,
		\end{multlined}	
	\end{aligned}		
\end{equation}
where we have used that $\bar\psi(0) =  w(t') = 0$ to integrate by parts the fourth term on left-hand side.

First, notice that thanks to assumptions \eqref{eqn:k1_positivity_lin} and \eqref{eqn:positivity} (or, alternatively, \eqref{eqn:positivity_nonsing}), we obtain that 
\[
-\taua_1 c^2 \int^{t'}_0\big(\frakKone \Lconv \nabla\bar{\psi}, \nabla w_t\big)_{L^2}\ds -\rt^b \delta  \int^{t'}_0(\frakKtwo \Lconv \nabla \bar{\psi}_{t}, \nabla w_t)_{L^2} \ds \geq 0,
\]
where we used that $\nabla w_t = -\nabla \bar\psi$.

We need now to treat a few different terms separately.
Using \eqref{eq:boundedness_assumption}, we bound the term
\begin{equation}
	\begin{multlined}
	\Big|- \taua_1\int^{t'}_0 \big(\frakKone * \bar{\psi}_{tt} , w_{t}\big)_{L^2} \ds \Big| = \taua_1\Big|\int^{t'}_0 \big((\frakKone * \bar{\psi}_{t}) , w_{tt}\big)_{L^2} \ds -  \big((\frakKone * \bar{\psi}_{t})(t') , w_{t}(t')\big)_{L^2}\Big|
	\\
	\leq  \taua_1 \|\frakKone\|_{\mm(0,t')}\Big(\|\bar\psi_t\|^2_{L^2(0,t';L^2(\Om))} + 
	\|\bar\psi_t\|_{L^\infty(0,t';L^2(\Om))}
	\|\bar\psi\|_{L^\infty(0,t';L^2(\Om))}\Big),
	\end{multlined}
\end{equation}
where we have used again that since $\bar\psi_t(0) =0$, then $ \frakKone \Lconv \bar \psi_t(0) = 0$. 

Secondly, integrating by parts, we have
\begin{equation}
	\begin{multlined}
	\int^{t'}_0(\frakKone \Lconv \psi^{(\tau_2)}_{tt}, w_t)_{L^2} \ds
	=  \int^{t'}_0(\frakKone \Lconv \psi^{(\tau_2)}_{t}, w_{tt})_{L^2}\ds  - \big((\frakKone * \psi^{(\tau_2)}_{t})(t') , w_{t}(t')\big)_{L^2}.
	\end{multlined}
\end{equation}
We can thus bound this term as follows:
\begin{equation}\label{eq:why_psit_uniform_bound_needed}
	\begin{aligned}
	& \Big|\,(\taua_1 -\taua_2 ) \int^{t'}_0(\frakKone \Lconv \psi^{(\tau_2)}_{tt}, w_t)_{L^2} \ds \,\Big|\\
	\leq\, & |\taua_1 -\taua_2 | \|\frakKone\|_{\mm(0,t')}\Big(\|\psi_t^{(\tau_2)}\|_{L^2(0,t';L^2(\Om))}\|\bar\psi_t\|_{L^2(0,t';L^2(\Om))} \\ &+ \|\psi_t^{(\tau_2)}\|_{L^\infty(0,t';L^2(\Om))}
	\|\bar\psi\|_{L^\infty(0,t';L^2(\Om))}  + \|\psitwo\|_{L^2(\Om)} \|\bar\psi\|_{L^1(0,t';L^2(\Om))}\Big).
	\end{aligned}
\end{equation}
\indent In higher-regularity settings in terms of data, it is conceivable that one may have a $\tau$-uniform bound on $\|\frakKone\Lconv\psi^{(\tau)}_{tt}\|_{L^1(0,t';L^2(\Omega))}$. In such a case, one may pursue showing a higher convergence rate. In our case, due to the low-regularity setting, we needed to integrate by parts and some of the ensuing terms (e.g., $\|\bar\psi_t\|_{L^2(0,t';L^2(\Om))}$) can no longer be absorbed by the left-hand side. Instead we will bound them uniformly by resorting to the triangle inequality and the energy estimates established in Theorem~\ref{Prop:Wellposedness_a_g_b}, or, alternatively, Proposition~\ref{Cor:wellposedness_GFEII}:
\[\|\bar\psi_t\|_{L^2(0,t';L^2(\Om))} \leq \|\psit^{(\tau_1)}\|_{L^2(0,t';L^2(\Om))} + \|\psit^{(\tau_2)}\|_{L^2(0,t';L^2(\Om))}.\]
To treat the second right-hand-side term in \eqref{eqn:Cauchy_testing}, we use again a time-reversed variable $\widetilde w(s) = w (t'-s)$ defined for all $s\in(0,t')$ to write 
\begin{equation}
	\begin{multlined}
		\int^{t'}_0(\frakKone \Lconv \nabla\psit^{(\tau_2)}, \nabla w)_{L^2} \ds 
		= \intO \frakKone \Lconv \nabla  \psit^{(\tau_2)} \Lconv \nabla \widetilde w \dx 
		= \int^{t'}_0\big((\frakKone \Lconv \nabla \widetilde w)(t'-s) , \nabla \psi^{(\tau_2)}_t(s)\big)_{L^2} \ds.
	\end{multlined}
\end{equation}
This form is handier for an integration by parts, especially because $\widetilde w(0) = w (t') = 0,$ such that $$(\frakKone \Lconv \nabla \widetilde w)_t = (\frakKone \Lconv \nabla \widetilde w_t).$$ This yields that
\begin{equation}
	\begin{multlined}
		\int^{t'}_0(\frakKone \Lconv \nabla\psit^{(\tau_2)}, \nabla w)_{L^2} \ds 
		=\int^{t'}_0\big((\frakKone \Lconv \nabla \widetilde w_t)(t'-s) , \nabla \psi^{(\tau_2)}(s)\big)_{L^2} \ds -\big((\frakKone \Lconv \nabla \widetilde w)(t') , \nabla \psi_0\big)_{L^2}.
	\end{multlined}
\end{equation}
Thus, we can bound the term 
\begin{equation}
	\begin{aligned}
		\Big|\,( \taua_2 - \taua_1) \int^{t'}_0(\frakKone \Lconv \nabla\psit^{(\tau_2)},& \nabla w)_{L^2} \ds\,\Big| 
		\\\leq \; &|\taua_1 -\taua_2 | \|\frakKone\|_{\mm(0,t')} \Big(\|\nabla \bar \psi\|_{L^2(0,t';L^2(\Om))}\|\nabla \psi^{(\tau_2)}\|_{L^2(0,t';L^2(\Om))} \\&+ \|\nabla w \|_{L^\infty(0,t';L^2(\Om))}\|\nabla \psi_0\|_{L^2(\Om))}\Big).
	\end{aligned}
\end{equation}
\color{black}
Using the above inequalities in \eqref{eqn:Cauchy_testing} (with $|\taua_1 -\taua_2 |\leq \taua_1 + \taua_2$), one obtains that
\begin{equation}
	\aaa \|\bar\psi(t')\|^2_{L^2(\Om)} + c^2 \bbb \|\nabla w(0)\|^2_{L^2(\Om)} \leq (\taua_1 +\taua_2)\, C_{\frakKone, \psi^{(\tau_1)}, \psi^{(\tau_2)},T},
\end{equation}
with $C_{\frakKone, \psi^{(\tau_1)}, \psi^{(\tau_2)},T}$ being uniformly bounded in $\tau$ thanks to Theorem~\ref{Prop:Wellposedness_a_g_b} or, alternatively, Proposition~\ref{Cor:wellposedness_GFEII}. Thus $\Big(\psi^{(\tau)}\Big)_{\tau \in(0,\overline \tau]}$ is a Cauchy sequence and converges to some limit in $L^\infty(0,T;L^2(\Omega))$. Uniqueness of limits ensures the desired result. Indeed, it suffices to write
\[ \|\psi^{(\tau_1)}(t') - \psi^{(0)}(t')\|_{L^2} \leq \|\bar\psi(t')\|_{L^2} + \|\psi^{(\tau_2)}(t') - \psi^{(0)}(t')\|_{L^2}\]
and to take the limit $\tau_2 \searrow 0$ to get the rate of convergence.
\qed
\end{proof}
The proof above fails for the case $\aaa = \bbb$ where, for a general $\frakR$, we only obtained uniform-in-$\tau$ well-posedness in the space $\calX^2$. Thus a uniform bound on $\|\bar\psi_t\|_{L^\infty(0,t';L^2(\Om))}$ is unavailable (see, e.g., inequality~\eqref{eq:why_psit_uniform_bound_needed}).
In what follows, we show that for general fractional MGT equations with $\aaa= \bbb$ we still obtain strong convergence but in a weaker norm with respect to time. Since the result below has a higher convergence rate than that of Theorem~\ref{Prop:Cauchy_seq1} we will state it for all the cases of interest in this section.
\\[4mm]
\noindent{\it Case a), b), and c).}
\begin{theorem}\label{Prop:Cauchy_seq2}
	Let the assumptions  of Theorem~\ref{Prop:Wellposedness_a_g_b} (if $\aaa > \bbb$), or of Proposition~\ref{Prop:wellposedness_GFE_laws} (if $\aaa \geq \bbb$),  or of Proposition~\ref{Cor:wellposedness_GFEII} (if $\aaa,\bbb>0$ and $\frakR =1$) hold. Then
	\[\|1\Lconv\psi^{(\tau)}- 1\Lconv\psi^{(0)}\|_{L^\infty(0,T;L^2(\Omega))} = O(\tau^a) \qquad \textrm{as } \tau \searrow 0.\]
\end{theorem}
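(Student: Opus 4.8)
\emph{Plan.} I would run a Cauchy-sequence argument, in the spirit of the proof of Theorem~\ref{Prop:Cauchy_seq1}, but testing with a test function integrated \emph{one more time} so as to pick up the extra power of $\tau^a$. Fix $t'\in(0,T)$ and $\tau_1,\tau_2\in(0,\overline\tau]$, and set $\bar\psi=\psi^{(\tau_1)}-\psi^{(\tau_2)}$. Subtracting the two instances of the weak formulation \eqref{eqn:ibvp_eqn} (the source term and the data $\psi_0,\psi_1$ cancel, so that the only inhomogeneity comes from the $\tau^a$-weighted terms and from the $\tau^a$-weighted contribution of $\psitwo$), $\bar\psi$ satisfies a difference identity whose right-hand side collects terms each carrying a factor $\tau_1^a$, $\tau_2^a$ or $\tau_2^a-\tau_1^a$, and whose test functions need only lie in $H^1(0,T;\Honezero)$ with vanishing value at $T$. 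As test function I would take, following the idea of the uniqueness argument in Proposition~\ref{Prop:weak_conv_1},
\[
w(t)=\int_t^{t'}\!\!\int_s^{t'}(1\Lconv\bar\psi)(\zeta)\,\textup{d}\zeta\,\textup{d}s \quad\text{for } 0\le t\le t',\qquad w(t)=0\quad\text{for }t'\le t\le T,
\]
so that $w\in H^1(0,T;\Honezero)$, $w(T)=0$, and on $[0,t']$ one has $w_{tt}=1\Lconv\bar\psi$ and $w_{ttt}=\bar\psi$, together with $w(t')=w_t(t')=0$ and $w_{tt}(0)=(1\Lconv\bar\psi)(0)=0$.

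Testing the difference identity with $w$ and integrating by parts in time, the term stemming from $\aaa\psi_{tt}$, i.e.\ $-\int(\aaa\bar\psi_t,w_t)$, produces the coercive quantity $\tfrac{\aaa}{2}\|(1\Lconv\bar\psi)(t')\|_{L^2(\Omega)}^2$ — here one uses $\bar\psi(0)=\bar\psi_t(0)=0$ and $w_t(t')=0$ — while the elliptic term produces the nonnegative boundary term $\tfrac{c^2\bbb}{2}\|\nabla w_t(0)\|_{L^2(\Omega)}^2$. The memory term $-\rt^b\delta(\frakKtwo\Lconv\nabla\bar\psi_t,\nabla w_t)$ and the $\tau_1^a$-weighted terms $-c^2\tau_1^a(\frakKone\Lconv\nabla\bar\psi_t,\nabla w)$ and $-\tau_1^a(\frakKone\Lconv\bar\psi_{tt},w_t)$ are rewritten, using $\bar\psi(0)=0$ to pull the time derivative out of the convolutions, and integrated by parts onto $w$; the positivity assumptions \eqref{eqn:k1_positivity_lin}, \eqref{eqn:causality_aaumption} and \eqref{eqn:positivity} (or \eqref{eqn:positivity_nonsing}), now applied to the once-integrated fields $1\Lconv\bar\psi$, respectively $\nabla(1\Lconv\bar\psi)$, whose vanishing at $t=0$ kills the $|y(0)|^2$-type corrections, make their leading part sign-definite, the remainder being of the form $\tau_1^a\times(\text{a uniformly-in-}\tau\text{ bounded norm of }\bar\psi,\,\nabla\bar\psi\text{ or }\frakKone\Lconv\bar\psi_t)\times\|1\Lconv\bar\psi\|_{L^\infty(0,t';L^2(\Omega))}$, hence absorbed via Young's inequality at the cost of $O(\tau_1^{2a})$ plus a fraction of the coercive term.

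For the right-hand-side mismatch terms $(\tau_2^a-\tau_1^a)(\psitwo,w(0))$, $(\tau_2^a-\tau_1^a)(\frakKone\Lconv\psi_{tt}^{(\tau_2)},w_t)$ and $(\tau_1^a-\tau_2^a)c^2(\frakKone\Lconv\nabla\psi_t^{(\tau_2)},\nabla w)$ I would proceed as in the proof of Theorem~\ref{Prop:Cauchy_seq1}: integrate by parts in time — introducing the time-reversed variable $\widetilde w(s)=w(t'-s)$ for the last one — so as to shift a derivative away from $\psi^{(\tau_2)}$, which leaves $\psi^{(\tau_2)},\,\psi_t^{(\tau_2)},\,\nabla\psi^{(\tau_2)}$ and $\psitwo$ — all bounded uniformly in $\tau$ by Theorem~\ref{Prop:Wellposedness_a_g_b}, Proposition~\ref{Prop:wellposedness_GFE_laws} or Proposition~\ref{Cor:wellposedness_GFEII} — paired against $w,w_t,w_{tt}$. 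Since $|\tau_1^a-\tau_2^a|\le\tau_1^a+\tau_2^a$ and, crucially, $w_{tt}=1\Lconv\bar\psi$ is precisely the quantity being estimated, these pairings are, up to factors of $T$ and of the uniform bounds, of the form $(\tau_1^a+\tau_2^a)\,\|1\Lconv\bar\psi\|_{L^\infty(0,t';L^2(\Omega))}$ (or of its gradient, to be controlled through the boundary term $\|\nabla w_t(0)\|^2$), and Young's inequality turns them into $O((\tau_1^a+\tau_2^a)^2)$ plus a fraction of the coercive term. In case b) only the $L^2$-in-time uniform estimates of Proposition~\ref{Prop:wellposedness_GFE_laws} are available, so the same manipulations are carried out with $L^2_t$- in place of $L^\infty_t$-norms. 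Collecting everything, absorbing the fractions of the coercive term and taking the supremum over $t'\in(0,T)$ yields $\|1\Lconv\psi^{(\tau_1)}-1\Lconv\psi^{(\tau_2)}\|_{L^\infty(0,T;L^2(\Omega))}\le C\,(\tau_1^a+\tau_2^a)$ with $C$ independent of $\tau_1,\tau_2$. Hence $(1\Lconv\psi^{(\tau)})_{\tau}$ is Cauchy in $L^\infty(0,T;L^2(\Omega))$, and since $\psi^{(\tau)}\rightharpoonup\psi^{(0)}$ by Proposition~\ref{Prop:weak_conv_1} (resp.\ Proposition~\ref{Prop:weak_conv_2}) its strong limit must be $1\Lconv\psi^{(0)}$; letting $\tau_2\searrow 0$ with $\tau=\tau_1$ fixed gives the claimed $O(\tau^a)$ rate.

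The delicate point — the ``creative tailored testing'' advertised in the introduction — is the bookkeeping in the last two paragraphs: one must verify that, after the integrations by parts, \emph{every} term carrying a $\tau$ is either sign-definite or multiplied by a genuine power of $\tau^a$ and paired with a norm of $1\Lconv\bar\psi$ (or of $\nabla(1\Lconv\bar\psi)$), since leaving a single $\tau^a$-term unpaired would only yield the weaker $O(\sqrt{\tau^a})$ rate of Theorem~\ref{Prop:Cauchy_seq1}. This is exactly why the test function is integrated one more time than there (so that $w_{tt}=1\Lconv\bar\psi$ rather than $\bar\psi_t$), and the term involving $\frakKone\Lconv\nabla\psi_t^{(\tau_2)}$ — for which only $\tau^a$-weighted, and in case b) only $L^2$-in-time, uniform bounds are at hand — is the one requiring the most care.
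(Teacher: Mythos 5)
Your proposal is correct and rests on exactly the same key idea as the paper's proof: the twice-integrated test function $w$ with $w_{tt}=1\Lconv\bar\psi$ and $w_{ttt}=\bar\psi$, the integrations by parts exploiting $w(t')=w_t(t')=w_{tt}(0)=0$ and $\bar\psi(0)=\bar\psi_t(0)=0$, the positivity assumptions applied to the once-integrated fields, and the absorption of every $\tau^a$-weighted remainder against the coercive quantities $\|w_{tt}(t')\|_{L^2}^2$ and $\|\nabla w_t(0)\|_{L^2}^2$ via Young's inequality and the uniform energy bounds. The one organizational difference is that you retain the Cauchy-sequence wrapper from Theorem~\ref{Prop:Cauchy_seq1}, whereas the paper observes that the extra regularity of this $w$ (it lies in $H^2(0,T;\Honezero)$ with $w(T)=w_t(T)=0$) makes it an admissible test function for the limiting weak form \eqref{eq:weak_limiting_eq}, so one may set $\bar\psi=\psi^{(\tau)}-\psi^{(0)}$ directly; this shortens the argument, eliminates the $|\tau_1^a-\tau_2^a|$ mismatch terms entirely, and dispenses with the final identification-of-limits step. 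Your route is valid nonetheless, at the cost of some extra bookkeeping; note only that the paper closes the estimate with Gr\"onwall's inequality (the terms $\|w_{tt}\|_{L^2(0,t';L^2)}^2$ and $\|\nabla\widetilde w_t\|_{L^2(0,t';L^2)}^2$ are not literally fractions of the coercive term at time $t'$), so you should either invoke Gr\"onwall as well or make the sup-over-$t'$ absorption argument explicit.
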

\begin{proof}
The main idea of the proof is to get rid of the necessity of a uniform bound on $\|\psi_t\|_{L^\infty(0,T;L^2(\Omega))}$ which we needed for Theorem~\ref{Prop:Cauchy_seq1}. To this end, similarly to the proof of uniqueness in Proposition~\ref{Prop:weak_conv_1}, we use the following test function:
\begin{equation}
	w (t)=
	\left\{\begin{array}{ll}
		\int_t^{t'} \int_s^{t'} 1\Lconv\bar\psi(\zeta) \, \textup{d} \zeta \ds  \quad  & \textrm{ if} \ \ 0\leq t\leq t',\\
		0 \quad  & \textrm{ if} \ \ t'\leq t\leq T.
	\end{array}\right.
\end{equation}
Now, not only $w(t') =0$, but also $w_t(t')= w_{tt}(0) =0 $. Moreover $w_{ttt} = \bar\psi$.
Note that the regularity of this test-function allows us to consider now directly the difference $\bar \psi = \psi^{(\tau)} - \psi^{(0)}$ for some $\tau \in (0,\bar\tau]$, and a Cauchy-sequence argument is not needed.

Let $\tau \in (0,\bar\tau]$. Recall that $\bar\psi(0) = \bar \psit(0) = 0$, it then follows that
\begin{equation}\label{eqn:Cauchy_testing_2}
	\begin{aligned}
		\begin{multlined}[t] \int^{t'}_0(\aaa w_{ttt}, w_{tt})_{L^2}\ds - c^2  \int^{t'}_0(\bbb \nabla w_{tt}, \nabla w_{t})_{L^2}\ds	+\rt^b \delta  \int^{t'}_0(\frakKtwo \Lconv \nabla w_{ttt}, \nabla w_{tt})_{L^2} \ds\\
			= \tau^a\int^{t'}_0(\frakKone \Lconv \psi^{(\tau)}_{tt}, w_t)_{L^2} \ds - \taua \big(\psitwo , w(0)\big)_{L^2} + \tau^a\int^{t'}_0(\frakKone  \Lconv \nabla\psit^{(\tau)}, \nabla w)_{L^2} \ds.
		\end{multlined}	
	\end{aligned}		
\end{equation}
We intend to handle the right-hand-side terms similarly to before where integration by parts was used:
\begin{equation}\label{eqn:Cauchy2_est1}
	\begin{multlined}
		\Big|\,\tau^a\int^{t'}_0(\frakKone \Lconv \psi^{(\tau)}_{tt}, w_t)_{L^2} \ds - \taua \big(\psitwo , w(0)\big)_{L^2}\,\Big|\\
		\leq  \taua \|\frakKone\|_{\mm(0,t')}\Big(\|\psi_t^{(\tau)}\|_{L^2(0,t';L^2(\Om))}\|w_{tt}\|_{L^2(0,t';L^2(\Om))} \\+ \|\psi_t^{(\tau)}\|_{L^\infty(0,t';L^2(\Om))}
		\|w_t\|_{L^\infty(0,t';L^2(\Om))}  + \|\psitwo\|_{L^2(\Om)} \|w(0)\|_{L^2(\Om)}\Big),
	\end{multlined}
\end{equation}
while the second term was integrated by parts once in time using that the time-reversed variable $\widetilde w (0) = w(t') = 0$ (recall that $\widetilde w(s) = w (t'-s)$ for all $s\in(0,t')$):
\begin{equation}\label{eqn:Cauchy2_est2}
	\begin{multlined}
		\Big|\,\taua \int^{t'}_0(\frakKone \Lconv \nabla\psit^{(\tau)}, \nabla w)_{L^2} \ds\,\Big| 
		\\ \leq \taua \|\frakKone\|_{\mm(0,t')} \Big(\|\nabla \widetilde  w_t\|_{L^2(0,t';L^2(\Om))}\|\nabla \psi^{(\tau)}\|_{L^2(0,t';L^2(\Om))} \\+ \|\nabla \widetilde w \|_{L^\infty(0,t';L^2(\Om))}\|\nabla \psi_0\|_{L^2(\Om)}\Big).
	\end{multlined}
\end{equation}
In contrast to Proposition~\ref{Prop:Cauchy_seq1}, now all the right-hand-side terms with $w$ can be absorbed. Indeed, notice that for $s \in (0,t')$
\begin{align}
w_t(s) = - \int_s^{t'} w_{tt} (\zeta) \textup{d}\, \zeta \quad\textrm{such that}\quad  \|w_t\|_{L^\infty(0,t';L^2(\Om))}  \lesssim_T
 \|w_{tt}\|_{L^2(0,t';L^2(\Om))}.
\end{align}
Similarly, we have that
\[
 \|w(0)\|_{L^2(\Om)}  \lesssim_T
\|w_{tt}\|_{L^2(0,t';L^2(\Om))},
\]
and 
\[\|\nabla \widetilde w \|_{L^\infty(0,t';L^2(\Om))} \lesssim_T \|\nabla \widetilde w_t \|_{L^2(0,t';L^2(\Om))}.
\]

Using Young's inequality in \eqref{eqn:Cauchy2_est1} and \eqref{eqn:Cauchy2_est2}, and piecing the estimates together, we obtain that:
\begin{equation}
	\begin{multlined}
			\aaa \| w_{tt}(t')\|^2_{L^2(\Om)} + c^2 \bbb \|\nabla \widetilde  w_t (t')\|^2_{L^2(\Om)} \\\lesssim_T \tau^{2a} \Check C_{\frakKone, \psi^{(\tau)}, \psi^{(0)}} + \|w_{tt}\|_{L^2(0,t';L^2(\Om))}^2 + \|\nabla \widetilde w_t\|^2_{L^2(0,t';L^2(\Om))},
			\end{multlined}
\end{equation}
where again, $\Check C_{\frakKone, \psi^{(\tau)}, \psi^{(0)}}$ is uniformly bounded in $\tau$ thanks to Theorem~\ref{Prop:Wellposedness_a_g_b},  Proposition~\ref{Prop:wellposedness_GFE_laws}, or, alternatively, Proposition~\ref{Cor:wellposedness_GFEII}.
Gr\"onwall's inequality yields then the desired result.
\qed
\end{proof}
This last theorem allows us to draw conclusions over the limiting behavior of a large class of higher-order linear models of acoustics where it is usually the case that $\aaa=\bbb=1$. We summarize some of the implications for well established fractional Moore--Gibson--Thompson equations in the following corollary.
  
\begin{corollary}\label{Cor:rate_of_MGT}
The rate of convergence:
\[\|1\Lconv\psi^{(\tau)}- 1\Lconv\psi^{(0)}\|_{L^\infty(0,t;L^2(\Omega))} = O(\tau^a) \qquad \textrm{as } \tau \searrow 0.\]
 holds for all equations in Table~\ref{table:r_kernels} with $\aaa = \bbb =1$ (with the restriction $\alpha>\frac12$ for fMGT I and fMGT).
 \\ 
 \noindent Additionally, when supplemented with initial data: \[ (\psi,\psit,\psitt) \Big|_{t=0} = (\psi_0,0,0),\] 
 with $\psi_0 \in H^1_0(\Omega)$, this rate of convergence also holds for the linear fractional MGT equations of the form
 \begin{equation}
 	\tau^a\frakKone \Lconv \psi_{ttt} + \psi_{tt} - c^2  \tau^a\frakKone \Lconv \Delta\psi_t - c^2  \Delta \psi -   \delta \rt^b \frakKtwo \Lconv \Delta\psi_{tt}   = f,
 	\end{equation}
 derived in~\cite{kaltenbacher2022time}. The kernels $\frakKone$ and $\frakKtwo$ are $L^1$-regular and are given in Table~\ref{table:r_kernels}, with the restriction $\alpha>\frac12$ for fMGT I and fMGT.  
\end{corollary}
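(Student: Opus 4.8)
The plan is to obtain both assertions as applications of Theorem~\ref{Prop:Cauchy_seq2}. For the first one, it suffices to check, model by model, that every wave equation listed in Table~\ref{table:r_kernels} with $\aaa=\bbb=1$ (and $\alpha>1/2$ for fMGT~I and fMGT) satisfies the hypotheses of at least one of Theorem~\ref{Prop:Wellposedness_a_g_b}, Proposition~\ref{Prop:wellposedness_GFE_laws}, or Proposition~\ref{Cor:wellposedness_GFEII}; once this is verified the rate $O(\tau^a)$ is precisely the conclusion of Theorem~\ref{Prop:Cauchy_seq2}, so nothing further is needed.

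The bulk of the work is then reading the kernel data off Table~\ref{table:r_kernels} and matching each model to the appropriate result. For MGT and fMGT~II one has $\frakR=g_1=1$, so Proposition~\ref{Cor:wellposedness_GFEII} applies; with $\aaa=\bbb$ the coefficient in \eqref{tau_bar_cond} collapses to $\delta\rt^b$, which is bounded away from $0$ for any $\overline\tau$ (take $\underline\delta=\tfrac12\delta\rt^b$). For fMGT~III, $\frakKone=\delta_0$ gives $\tfrakKone=g_1=1\in L^2(0,T)$, so Proposition~\ref{Prop:wellposedness_GFE_laws} applies (recall $\aaa=\bbb$), the coercivity assumption \eqref{eqn:positivity} for $\frakKtwo=g_\alpha$ holding as recorded on page~\pageref{discussion_k2_assumption}. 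For fMGT~I and fMGT one has $\tfrakKone=g_\alpha$, and $g_\alpha(t)=t^{\alpha-1}/\Gamma(\alpha)$ is square integrable on $(0,T)$ exactly when $\alpha>1/2$; under this restriction Proposition~\ref{Prop:wellposedness_GFE_laws} applies as well, with coercivity of $\frakKtwo$ supplied by \eqref{eqn:positivity} for fMGT~I ($\frakKtwo=g_\alpha$) and by \eqref{eqn:positivity_nonsing} for fMGT ($\frakKtwo=g_1=1$ is smooth). In every case the remaining structural requirements \eqref{eq:boundedness_assumption}--\eqref{eqn:k1_positivity_lin} are the standard boundedness and positivity properties of $\delta_0$ and $g_\alpha$, checked via the Fourier-transform computations referenced in Section~\ref{Sec:uniform_wellp}. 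With all hypotheses in place, Theorem~\ref{Prop:Cauchy_seq2} delivers the stated rate for each model.

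For the second assertion, the linear equations of \cite{kaltenbacher2022time}, which carry the Caputo--Dzhrbashyan leading term $\tau^a\frakKone\Lconv\psittt$, become identical to \eqref{eqn:first_eq} with $\aaa=\bbb=1$ once the third datum $\psitt(0)=0$ is prescribed: then $(\frakKone\Lconv\psitt)_t=\frakKone\Lconv\psittt$ by \cite[Corollary 3.7.3]{gripenberg1990volterra} in the regularity furnished by the solution space, and in particular $\psitwo=0$ (see the discussion on page~\pageref{discussion:RL_leading_term}). By uniqueness the solution of \cite{kaltenbacher2022time} coincides with the one produced above, so the first part applies verbatim and yields the same rate, the restriction $\alpha>1/2$ for fMGT~I and fMGT being inherited from the $L^2(0,T)$-membership of $\tfrakKone=g_\alpha$. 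The only mildly delicate point in the whole argument---and the main obstacle, such as it is---is pinning down this threshold, and hence realizing that for fMGT~I and fMGT one must route through Proposition~\ref{Prop:wellposedness_GFE_laws} (Theorem~\ref{Prop:Wellposedness_a_g_b} being inapplicable since $\aaa=\bbb$); everything else is routine bookkeeping.
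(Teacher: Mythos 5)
Your proposal is correct and follows essentially the same route as the paper, which treats the corollary as a direct consequence of Theorem~\ref{Prop:Cauchy_seq2} once each model in Table~\ref{table:r_kernels} is matched to the applicable well-posedness result (Proposition~\ref{Cor:wellposedness_GFEII} for $\frakR=1$, Proposition~\ref{Prop:wellposedness_GFE_laws} when $\tfrakKone\in L^2(0,T)$, whence $\alpha>\tfrac12$ for fMGT~I and fMGT) and the leading terms are identified via $(\frakKone\Lconv\psitt)_t=\frakKone\Lconv\psittt$ under $\psitt(0)=0$. Your model-by-model bookkeeping and the uniqueness argument for the second assertion match the paper's intended reasoning.
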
 
For the integer-order MGT equation, Corollary~\ref{Cor:rate_of_MGT} complements the result of~\cite[Theorem 2.4]{bongarti2020singular}.
Indeed, in the aforementioned reference, a rate of convergence is established in the energy norm for initial data in $H^2(\Omega) \times H^2(\Om) \times H^1(\Om)$. Here, we were able to provide convergence rates in weaker norms for initial data in $H^1(\Omega) \times \{0\} \times L^2(\Om)$. As mentioned in the discussion on page~\pageref{discussion:psit0}, the requirement $\psi_1 = 0$ can be dropped in a straightforward manner if $\frakKone = \delta_0$ and we expect the results here to hold for the integer-order MGT equation with initial data in $H^1(\Omega) \times H^1(\Om) \times L^2(\Om)$.

\section*{Conclusion}
In this work, we have shown the flexibility of our framework in studying well-posedness for a large family of higher-order-in-time wave equations. In particular, we have established $\tau$-uniform well-posedness for generalized fractional MGT equations. We have thereafter connected the fMGT equations to second-order-in-time models through a tailored limiting procedure.

Additionally, through a carefully designed testing strategy, we have established the rate of convergence at the vanishing relaxation time limit ($\tau \searrow 0$) in a relatively low-regularity setting. 
As a byproduct, we also obtained novel convergence rates for the MGT equation, thus complementing on the results available in the literature. The low regularity assumed of the initial data allows us to consider extending the results to mixed Neumann/Dirichlet-absorbing boundary conditions, which are relevant for the simulation of acoustic phenomena. 

The flexibilty of our framework allows us also to contemplate generalizing the available decay rate results for the Moore--Gibson--Thompson equation~(see, e.g., \cite{kaltenbacher2011wellposedness,pellicer2019wellposedness}) to the nonlocal setting, thus helping us further understand the long-term behavior of fractional wave equations.
\begin{appendices}
	\section{Additional notes on fractional derivative spaces}
An analogous result to Lemma~\ref{Lemma:Caputo_seq_compact} can be formulated for sequential compactness with Riemann--Liouville-type derivatives. Although not needed in this work, the interested reader might find it useful. It is in particular noteworthy that the completeness of the related space (denoted $Y_{\mathfrak{K}}^p$ below) does not rest on the regularity of the resolvent, in contrast to the space constructed with Caputo--Dzhrabashyan-type derivatives, $X^p_\genk$. 

\begin{lemma}[Compactness of $Y_{\mathfrak{K}}^p$] \label{Lemma:RL_seq_compact}
	Let $1\leq p\leq \infty$ and let $\genk \in \mm(0,T)$. Consider the space
	\[
	Y_{\mathfrak{K}}^p = \{u \in L^p(0,T) \ |\ (\mathfrak{K} \Lconv u)_t \in L^p(0,T)\}
	\]
	with the norm
	$\|u\|_{Y_\genk^p(0,T)} = \big(\|u\|_{L^p}^p + \|(\genk\Lconv u_t)\|_{L^p}^p \big)^{1/p}$, and the usual modification for $p=\infty$.
	
	Then $Y_\genk^p(0,T)$ is reflexive for $1<p<\infty$ and separable for $1 \leq p<\infty$.
	Furthermore, the unit ball of $Y_\genk^p(0,T)$, $B_{Y_\genk}^p$, is weakly sequentially compact for $1<p<\infty$. $B_{Y_\genk}^\infty$ is weak-$*$ sequentially compact. 
\end{lemma}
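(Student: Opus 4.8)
The plan is to follow the blueprint of the proof of Lemma~\ref{Lemma:Caputo_seq_compact}, noting that everything simplifies because the defining quantity $(\genk\Lconv u)_t$ is produced by first applying the operator $\operatorname{T}_\genk$, which is bounded on every $L^p(0,T)$ with norm $\|\genk\|_{\mm(0,T)}$ (see \cite[Chapter 3]{gripenberg1990volterra}), and only then taking a distributional time derivative; at no stage does one need to invert $\operatorname{T}_\genk$, so no hypothesis on a resolvent is required — which is precisely the contrast with $X_\genk^p$ that the appendix is meant to highlight. First I would prove completeness: given a Cauchy sequence $(u_n)_{n\geq1}\subset Y_\genk^p$, both $(u_n)_{n\geq1}$ and $\big((\genk\Lconv u_n)_t\big)_{n\geq1}$ are Cauchy in $L^p(0,T)$, hence converge to some $u$ and $g$; since $\operatorname{T}_\genk$ is bounded, $\genk\Lconv u_n\to\genk\Lconv u$ in $L^p(0,T)$, and for any $\phi\in C_c^\infty(0,T)$ the identity $\int_0^T (\genk\Lconv u_n)_t\,\phi\ds = -\int_0^T (\genk\Lconv u_n)\,\phi_t\ds$ passes to the limit to give $\int_0^T g\,\phi\ds = -\int_0^T (\genk\Lconv u)\,\phi_t\ds$, i.e.\ $(\genk\Lconv u)_t = g$ weakly. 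Thus $u\in Y_\genk^p$ and $\|u_n-u\|_{Y_\genk^p}\to 0$.

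Next, reflexivity and separability: the map $u\mapsto\big(u,(\genk\Lconv u)_t\big)$ is a linear isometry from $Y_\genk^p$ onto a subspace of $L^p(0,T)\times L^p(0,T)$, and by the completeness just shown this subspace is closed. For $1<p<\infty$, $L^p(0,T)\times L^p(0,T)$ is reflexive, so its closed subspaces are reflexive, hence $Y_\genk^p$ is reflexive (cf.\ \cite[Proposition 8.1]{brezis2010functional}); for $1\leq p<\infty$ the product is separable and so is every subspace, hence $Y_\genk^p$ is separable. Weak sequential compactness of the unit ball $B_{Y_\genk}^p$ for $1<p<\infty$ then follows from reflexivity together with the Eberlein--\v{S}mulian theorem \cite[Theorem 11.8]{clason2020introduction}.

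For $p=\infty$ I would argue exactly as in the last part of the proof of Lemma~\ref{Lemma:Caputo_seq_compact}: a sequence $(u_n)_{n\geq1}$ in $B_{Y_\genk}^\infty$ yields a bounded sequence $\big(u_n,(\genk\Lconv u_n)_t\big)_{n\geq1}$ in $L^\infty(0,T)\times L^\infty(0,T)=\big(L^1(0,T)\times L^1(0,T)\big)^*$, whose predual is separable, so by Banach--Alaoglu there is a subsequence with $\big(u_n,(\genk\Lconv u_n)_t\big)\rightharpoonup(u,g)$ weakly-$*$. Since $\operatorname{T}_\genk$ on $L^\infty(0,T)$ is the adjoint of a bounded operator on $L^1(0,T)$ (a reflected Laplace convolution against $\genk$), it is weak-$*$ continuous, so $\genk\Lconv u_n\rightharpoonup\genk\Lconv u$ weakly-$*$; testing against $\phi\in C_c^\infty(0,T)$ as above identifies $(\genk\Lconv u)_t=g$ weakly, whence $u\in Y_\genk^\infty$ and $u_n\rightharpoonup u$ weakly-$*$ in $Y_\genk^\infty$.

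All steps are routine; the one point deserving a little care is the weak-$*$ continuity of $\operatorname{T}_\genk$ on $L^\infty(0,T)$ invoked in the $p=\infty$ case, which one checks by Fubini using only the finiteness of the measure $\genk$. I would also add one sentence emphasizing that, unlike for $X_\genk^p$, no assumption whatsoever on the resolvent $\tilde\genk$ enters anywhere in the argument.
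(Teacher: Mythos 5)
Your proposal is correct and follows essentially the same route as the paper: the completeness argument via boundedness of $\operatorname{T}_\genk$ and testing against smooth functions is identical, and the reflexivity/separability/compactness parts (isometric embedding into $L^p\times L^p$, Eberlein--\v{S}mulian, Banach--Alaoglu) are exactly the arguments the paper defers to with ``similar to those given in Lemma~\ref{Lemma:Caputo_seq_compact}.'' Your extra care about weak-$*$ continuity of $\operatorname{T}_\genk$ on $L^\infty$ as the adjoint of the reflected convolution is a welcome detail that the paper leaves implicit.
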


\begin{proof}
	To show that $Y_{\mathfrak{K}}^p$ is complete, take a Cauchy sequence $(u_n)_{n \geq 1}\subset Y_{\mathfrak{K}}^p$. Then $(u_n)_{n \geq 1}$ and $\big((\genk\Lconv u_n)_t\big)_{n \geq 1}$ are Cauchy sequences in $L^p(0,T)$ and therefore converge to some limits $u$ and $g$, respectively, in $L^p(0,T)$. 
	Since 
	\begin{align}
		\operatorname{T}_{\mathfrak{K}}\ : L^p(0,T) &\rightarrow L^p(0,T) \\
		u&\mapsto \mathfrak{K}\Lconv u,
	\end{align} 
	is bounded (due to \cite[Theorem 3.6.1]{gripenberg1990volterra}) and thus continuous, we have
	\[\mathfrak{K}\Lconv u_n \to \mathfrak{K}\Lconv u \quad \textrm{strongly in }L^p(0,T).\]
	Let $\phi \in C_c^1([0,T])$ and let $n\geq 1$. Then 
	\[
	\int_0^T (\mathfrak{K}\Lconv u_n)_t \,\phi \ds = - \int_0^T (\mathfrak{K}\Lconv u_n) \,\phi_t\ds. 
	\]
	Passing to the limit, we get
	\[
	\int_0^T g \,\phi \ds = - \int_0^T \mathfrak{K}\Lconv u \,\phi_t\ds. 
	\]
	Thus, $u \in Y^p_{\mathfrak{K}}$, $(\mathfrak{K}\Lconv u)_t = g$, and $\|u_n-u\|_{Y_{\mathfrak{K}}^p} \to 0$ as $n\to \infty$.	
	The proofs of compactness are similar to those given in Lemma~\ref{Lemma:Caputo_seq_compact}, so we omit the details here.
	\qed
\end{proof}
Compared to Lemma~\ref{Lemma:Caputo_seq_compact}, the weaker assumption on the existence and regularity of a resolvent, translates however into a failure to obtain an embedding of $Y_\genk^p$ into $C[0,T]$. This embedding can be recovered if a resolvent $\tilde \genk \in L^{p'}(0,T)$ exists and if the initial condition is given by $(\genk\Lconv u) (0) = 0$.
\end{appendices}

\begin{acknowledgements}
	The author is very grateful to Vanja Nikoli\'c (Radboud University) for valuable discussions and comments on the draft.
\end{acknowledgements}

\bibliography{references}{}
\bibliographystyle{spmpsci} 

\end{document}